\documentclass[letterpaper,11pt]{amsart}


\usepackage[margin=1.2in]{geometry}
\usepackage{amsmath,amsthm,amssymb}
\usepackage{xspace,xcolor}
\usepackage[breaklinks,colorlinks,citecolor=teal,linkcolor=teal,urlcolor=teal,pagebackref,hyperindex]{hyperref}
\usepackage[alphabetic]{amsrefs}
\usepackage{xypic}
\usepackage{tikz-cd}
\usepackage{comment}
\usepackage{xfrac}
\usepackage{mathrsfs}

\usepackage{color}

\setlength{\parskip}{.05 in}


\theoremstyle{plain}
\newtheorem{thm}{Theorem}[section]

\newtheorem{lem}[thm]{Lemma}
\newtheorem{prop}[thm]{Proposition}
\newtheorem{cor}[thm]{Corollary}

\theoremstyle{definition}

\newtheorem{eg}[thm]{Example}

\theoremstyle{remark}
\newtheorem{rmk}[thm]{Remark}



\def\Z{{\mathbf Z}}
\def\Q{{\mathbf Q}}

\def\C{{\mathbf C}}

\def\A{{\mathbf A}}

\def\cC{\mathcal{C}}
\def\cD{\mathcal{D}}

\def\cF{\mathcal{F}}

\def\cH{\mathcal{H}}

\def\cJ{\mathcal{J}}

\def\cM{\mathcal{M}}
\def\cN{\mathcal{N}}
\def\cO{\mathcal{O}}

\def\cT{\mathcal{T}}

\def\bR{{\bf R}}

\def\a{\alpha}



\def\gr{{\rm Gr}}
\def\log{{\rm log}}
\def\dlog{{\rm dlog}}

\def\.{\cdot}
\def\^{\widehat}

\def\({\left(}
\def\){\right)}

\renewcommand{\and}{ \ \ \text{ and } \ \ }

\newcommand{\factor}[2]{\left. \raise 2pt\hbox{$#1$} \right/\hskip -2pt\raise -2pt\hbox{$#2$}}

\DeclareMathOperator{\codim} {codim}

\DeclareMathOperator{\lct} {lct}

\DeclareMathOperator{\id} {id}



\usepackage{soul}

\begin{document}

\author[Q.~Chen]{Qianyu Chen}

\address{Department of Mathematics, University of Michigan, 530 Church Street, Ann Arbor, MI 48109, USA}
\address{Institute of Geometry and Physics, University of Science and Technology of China, No.99, Xiupu Road, Pudong New Area, Shanghai, 200000, P.R.China}

\email{qyc@umich.edu, qianyu.chen16@gmail.com}

\author[M.~Musta\c{t}\u{a}]{Mircea Musta\c{t}\u{a}}

\address{Department of Mathematics, University of Michigan, 530 Church Street, Ann Arbor, MI 48109, USA}

\email{mmustata@umich.edu}

\title{A birational description of the minimal exponent}

\thanks{Q.C. was partially supported by an AMS-Simons Travel grant and M.M. was partially supported by NSF grant DMS-2301463 and by the Simons Collaboration grant \emph{Moduli of
Varieties}.}

\subjclass[2020]{14B05, 14F10, 14J17, 32S25}

\keywords{Minimal exponent, $V$-filtration, nearby cycles, log resolution}

\begin{abstract}
We give a description of the minimal exponent of a hypersurface using higher direct images of suitably twisted sheaves of log forms on a log resolution.
\end{abstract}

\maketitle

\section{Introduction}

Let $X$ be a smooth, irreducible, $n$-dimensional complex algebraic variety and let $Z$ be a hypersurface in $X$. The \emph{log canonical threshold} ${\rm lct}(X,Z)$ is an invariant of the singularities of $Z$
that plays a key role in birational geometry. This can be described in terms of a log resolution of $(X,Z)$, as follows. Suppose that $\pi\colon Y\to X$ is such a log resolution that we assume (for later purposes)
to be an isomorphism over $X\smallsetminus Z$. Therefore $\pi$ is proper, $Y$ is smooth, and $\pi^*(Z)=\sum_{i=1}^Na_iE_i$ is a simple normal crossing divisor. If we also write
$K_{Y/X}=\sum_{i=1}^Nk_iE_i$ (where $K_{Y/X}$ is the relative canonical divisor, locally defined by the determinant of the Jacobian matrix of $\pi$), then
\begin{equation}\label{formula_lct}
{\rm lct}(X,Z)=\min_{i=1}^N\tfrac{k_i+1}{a_i}.
\end{equation}
One can also interpret the log canonical threshold in terms of the \emph{multiplier ideals} of $Z$ (see \cite[Chapter~9.3.B]{Lazarsfeld}). Recall that for every $\lambda\in {\mathbf Q}_{\geq 0}$,
the multiplier ideal $\cJ(X,\lambda Z)$ is defined by
$$\cJ(X,\lambda Z)=\pi_*\cO_Y\big(K_{Y/X}-\lfloor\lambda\pi^*(Z)\rfloor\big).$$
With this definition, it is straightforward to see that
\begin{equation}\label{char_mult_ideals}
{\rm lct}(X,Z)=\min\big\{\lambda\in\Q_{\geq 0}\mid \cJ(X,\lambda Z)\neq\cO_X\big\}.
\end{equation}

Our goal in this note is to give an analogous description for a refinement of the log canonical threshold, the \emph{minimal exponent} $\widetilde{\alpha}(Z)$ (whenever the ambient variety 
is not clear from the context, we write $\widetilde{\alpha}(X,Z)$). This invariant was defined by Saito 
in \cite{Saito-B} using the Bernstein-Sato polynomial of a local equation of $Z$. By a result of Lichtin and Koll\'{a}r (see \cite[Section~10]{Kollar}), we have
$${\rm lct}(X,Z)=\min\big\{\widetilde{\alpha}(Z),1\big\}.$$
Furthermore, it was shown in \cite{Saito-B} that $\widetilde{\alpha}(Z)>1$ if and only if $Z$ has rational singularities. 
Recently, there has been interest in the minimal exponent due to the fact that it can be used to characterize higher Du Bois and higher rational singularities
(see \cite{MOPW}, \cite{Saito_et_al}, \cite{FL}, \cite{MP}).

Part of the motivation for our main results comes from the desire to have a birational description of the minimal exponent that is analogous to (\ref{formula_lct}). 
We recall that if $Z$ is reduced and the log resolution $\pi$ has the property that the strict transform of $Z$ is smooth, then it was shown in 
\cite[Corollary~D]{MPV} that we always have the inequality
\begin{equation}\label{formula_lct2}
\widetilde{\alpha}(Z)\geq\min_{i; E_i\,{\rm exc}}\frac{k_i+1}{a_i},
\end{equation}
where the minimum is over all \emph{exceptional} divisors $E_i$. 
Unfortunately, as pointed out by Koll\'{a}r, one can't hope to have equality in (\ref{formula_lct2}) since the right-hand side
depends on the log resolution. Therefore, describing the minimal exponent in terms of the numerical data of the resolution 
remains problematic. 
However, an outstanding open question in this direction is whether there is always an exceptional divisor $E_i$
such that $\widetilde{\alpha}(Z)=\frac{k_i+1}{a_i}$. 

The approach that we take in this paper goes in a slightly different direction: we aim for a cohomological characterization of the minimal exponent in terms of a log resolution
which extends the characterization (\ref{char_mult_ideals}) of the log canonical threshold via the triviality of multiplier ideals. In fact, 
the minimal exponent can be characterized in terms of the \emph{Hodge ideals} $I_p(\lambda Z)$, ideals introduced and studied in \cite{MPQ}. We note that for $p=0$, these recover the multiplier ideals:
we have $I_0(\lambda Z)=\cJ\big((\lambda-\epsilon)Z\big)$ for $0<\epsilon\ll 1$. It was shown in \cite{MPV} that for every nonnegative integer $p$ and every $\alpha\in (0,1]\cap\Q$, we have
$$\widetilde{\alpha}(Z)\geq p+\alpha\quad\text{if and only if}\quad I_p(\alpha Z)=\cO_X.$$
The Hodge ideals admit a description via a log resolution which extends the description of multiplier ideals, see \cite[Section~8]{MPQ}. However, this description is as the derived push-forward 
of a complicated complex on the log resolution and thus hard to relate to more concrete geometric information on the log resolution. Our goal in this note is to give a more explicit description of the minimal
exponent that only involves the higher direct images of twists of certain sheaves of log differentials on the resolution. 

Yet another description of the minimal exponent was given by Saito in \cite{Saito-MLCT} in terms of his \emph{microlocal multiplier ideals}, defined using the $V$-filtration with respect to a local equation of $Z$.
These ideals have been systematically studied recently in \cite{SY} under the name of \emph{higher multiplier ideals}. 
Our approach in this article is closer in spirit to this point of view since we use the description
of the minimal exponent in terms of the $V$-filtration.

\bigskip

Let us describe our main results now. Let $\pi\colon Y\to X$ be a log resolution as before and let $D=\pi^*(Z)$ and $E=D_{\rm red}$. Our goal is to describe the condition $\widetilde{\alpha}(Z)>\gamma$, for some
$\gamma\in\Q_{\geq 0}$,
using the higher direct images of the sheaves of differential forms with log poles along $E$, suitably shifted. If $Z$ is not reduced, then ${\rm lct}(X,Z)<1$ and thus $\widetilde{\alpha}(Z)={\rm lct}(X,Z)$. 
Therefore, from now on, we assume that $Z$ is reduced. 

We treat the cases $\gamma\in \Z$ and $\gamma\not\in\Z$ separately. 
If $\gamma=p\in \Z_{>0}$, then it was shown in \cite{MP} and \cite{FL} that the condition $\widetilde{\alpha}(Z)>p$ is equivalent to $Z$ having $(p-1)$-rational singularities. This is a condition that can be formulated
in terms of a resolution of singularities of $Z$. The following result rephrases this condition in terms of the log resolution of $(X,Z)$. Let us denote by $Z_{\rm sing}$ the singular locus of $Z$.

\begin{thm}\label{thm_char}
If $Z$ is a reduced hypersurface in $X$ and $p\in \Z_{>0}$, then $\widetilde{\alpha}(Z)>p$ if and only if 
the following two conditions hold:
\item[i)] We have
\begin{equation}\label{eq_thm_char}
R^q\pi_*\Omega_Y^i({\rm log}\,E)=0\quad\text{for all}\quad q\geq 1, i\leq p.
\end{equation}
\item[ii)] We have ${\rm codim}_Z(Z_{\rm sing})\geq 2p$.
\end{thm}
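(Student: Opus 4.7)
The plan is to use the equivalence recalled just before the statement, established in \cite{MP} and \cite{FL}: $\widetilde{\alpha}(Z)>p$ holds if and only if $Z$ has $(p-1)$-rational singularities. The theorem thus amounts to showing that $(p-1)$-rationality of $Z$ is equivalent to conditions (i) and (ii) for the log resolution $\pi\colon Y\to X$.

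For the necessity of (ii), I would use the standard fact that a $k$-rational hypersurface singularity forces $\codim_Z Z_{\sing}\geq 2k+2$; applied with $k=p-1$, this gives (ii). The codimension estimate follows from the general bound $\widetilde{\alpha}(Z)\leq n/\mult_x(Z)$ applied at a generic point of an irreducible component of $Z_{\sing}$, combined with a local dimension count inside $Z$.

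The heart of the argument is a comparison between $R\pi_*\Omega_Y^i(\log E)$ and the Du Bois complex $\underline{\Omega}_Z^i$ of $Z$. Using the residue exact sequences relating $\Omega_Y^i(\log E)$, $\Omega_Y^i$ and sheaves of log forms on intersections of components of $E$, together with the Steenbrink-type vanishing in top degree, I would construct an exact triangle
\begin{equation*}
\Omega_X^i\longrightarrow R\pi_*\Omega_Y^i(\log E)\longrightarrow \iota_*\mathcal{G}^i\xrightarrow{+1},
\end{equation*}
where $\iota\colon Z\hookrightarrow X$ is the inclusion and $\mathcal{G}^i$ is assembled from $\underline{\Omega}_Z^i$ together with auxiliary contributions supported on $Z_{\sing}$. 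The codimension hypothesis (ii) is precisely what is needed to push the singular-locus contributions into cohomological degrees $>p$, so that the vanishing in (i) becomes equivalent to the characterization of $(p-1)$-rational singularities given in \cite{FL}.

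The main obstacle, as I see it, is the precise construction of this triangle: the divisor $E$ records not only the strict transform $E_0$ of $Z$ but also the exceptional divisors of $\pi$, and their cohomological contributions must be matched with the intrinsic complex of $Z$. I would approach this by filtering $\Omega_Y^i(\log E)$ by order of pole along the exceptional components and applying relative Steenbrink--Akizuki--Nakano-type vanishing to reduce inductively to the snc pair $(Y,E_0)$, where the identification with invariants of $Z$ is transparent. The case $p=1$ (classical rational singularities, where the criterion reduces to $R^q\pi_*\mathcal{O}_Y=0$ for $q\geq 1$ together with normality) provides a useful sanity check and a base case for the induction.
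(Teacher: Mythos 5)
Your overall frame is the same as the paper's: reduce to the characterization of $\widetilde{\alpha}(Z)>p$ as $(p-1)$-rationality, and get the necessity of (ii) from the codimension bound for higher rational singularities (the paper cites \cite[Proposition~7.4]{MP2}; your multiplicity/transverse-slice sketch is the standard way to prove that bound). However, the step you yourself identify as ``the main obstacle'' is exactly where the proof lives, and your plan for it has a genuine gap. The ingredient you are missing is the comparison isomorphism of \cite[Proposition~1.1]{MP}: after arranging $\pi$ to be a composition of smooth blow-ups over $Z_{\rm sing}$ and writing $E=\widetilde{Z}+F$ with $\widetilde{Z}$ the strict transform, one has $R^q\pi_*\Omega_Y^{i+1}(\log E)\simeq R^q\pi_{Z*}\Omega^i_{\widetilde{Z}}(\log F\vert_{\widetilde{Z}})$ for all $q\geq 1$ and $i\leq r-1$, where $r=\codim_Z(Z_{\rm sing})$. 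Note the degree shift $i+1\leftrightarrow i$ coming from the residue map. Your proposed triangle $\Omega_X^i\to R\pi_*\Omega_Y^i(\log E)\to\iota_*\mathcal{G}^i$ pairs $\Omega_Y^i(\log E)$ with degree-$i$ data on $Z$ with no shift, and the only triangle of this flavor actually available (Steenbrink's $R\pi_*\Omega_Y^p(\log E)(-E)\to\Omega_X^p\to\underline{\Omega}_Z^p$) involves the twist by $-E$ and computes Du Bois, not rational, invariants. Your $p=1$ sanity check reflects the same confusion: $R^q\pi_*\mathcal{O}_Y=0$ is automatic for a birational morphism of smooth varieties and carries no information about $Z$; the real content of (i) for $p=1$ sits in $i=1$, where the shifted isomorphism turns it into $R^q\pi_{Z*}\mathcal{O}_{\widetilde{Z}}=0$, i.e.\ rationality of $Z$.

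There is a second gap independent of how the comparison is set up. Condition (i) only concerns $R^q$ for $q\geq 1$, whereas $(p-1)$-rationality also demands that the degree-zero maps $\Omega_Z^i\to\pi_{Z*}\Omega^i_{\widetilde{Z}}(\log F\vert_{\widetilde{Z}})$ be isomorphisms for $i\leq p-1$; no amount of pushing ``singular-locus contributions into degrees $>p$'' produces this $q=0$ statement. The paper closes it as follows: for $p=1$, (ii) gives $r\geq 2$ so $Z$ is normal; for $p\geq 2$, induction on $p$ gives $\widetilde{\alpha}(Z)>1$, hence $Z$ has rational singularities, so the map in degree zero is an isomorphism exactly when $\Omega_Z^i$ is reflexive, and Graf's theorem guarantees reflexivity for $i\leq r-2$, which holds because $i\leq p-1\leq r-2$ thanks to (ii). This is also precisely where the codimension hypothesis enters the sufficiency direction (and why the weaker bound $\min\{3,2p\}$ suffices, per Remark~\ref{codim_sing_locus}). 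Your proposal uses (ii) only as a necessary condition and never feeds it back into the sufficiency argument, so even granting your triangle, the equivalence would not close.
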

In fact, the proof will show that in ii) above, it is enough to assume that ${\rm codim}_Z(Z_{\rm sing})\geq \min\{3,2p\}$ (see Remark~\ref{codim_sing_locus}). The proof of Theorem~\ref{thm_char} 
follows easily from the results in \cite{MP}.

Our main contribution in this paper is to treat the case when $\gamma\not\in\Z$. In this case, we write $\gamma=p+\alpha$, with $p\in\Z$ and $\alpha\in (0,1)$. 

\begin{thm}\label{thm_main}
Let $Z$ be a reduced hypersurface in $X$. For every $\alpha\in (0,1)\cap\Q$ and $p\in \Z_{\geq 0}$, the following hold:
\begin{enumerate}
\item[i)] If $\widetilde{\alpha}(Z)\geq p$, then
\begin{equation}\label{eq_thm_main}
R^q\pi_*\Omega_Y^{n-p}({\rm log}\,E)\big(-E-\lfloor \alpha D\rfloor\big)\to R^q\pi_*\Omega_Y^{n-p}{(\rm log}\,E)(-E)
\end{equation}
is an isomorphism for all $q\neq p$ and it is injective for $q=p$.
\item[ii)] If $\widetilde{\alpha}(Z)>p$, then we have
$\widetilde{\alpha}(Z)>p+\alpha$ if and only if the morphism (\ref{eq_thm_main}) is an isomorphism for $q=p$.
\end{enumerate}
\end{thm}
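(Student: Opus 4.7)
The plan is to recognize the morphism in (\ref{eq_thm_main}) as a comparison between the $p$-th graded pieces of the Hodge filtration on two twisted $\cD$-modules on $X$ --- the localization $\cO_X(*Z)$ and a rank-one twist $M_\alpha$ governing $I_p(\alpha Z)$ --- and to combine this with the identification $\widetilde{\alpha}(Z) \geq p+\alpha \Longleftrightarrow I_p(\alpha Z) = \cO_X$ from \cite{MPV} recalled in the introduction.

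The first step is to invoke the log-resolution description of Hodge ideal pieces from \cite[Section~8]{MPQ}: for $\alpha \in [0,1) \cap \Q$, the graded piece $\mathrm{gr}^F_p$ of the appropriate twisted localization is computed by the hypercohomology of a Koszul-style complex $C_\alpha^\bullet$ on $Y$ whose bottom term has the form $\Omega_Y^{n-p}(\log E)(-E - \lfloor \alpha D \rfloor)$ and whose higher terms involve $\Omega_Y^{n-p+i}(\log E)$ twisted by $\cO_Y(iE - \lfloor\alpha D\rfloor)$. The natural inclusion $C_\alpha^\bullet \hookrightarrow C_0^\bullet$ of subcomplexes then induces a morphism of hypercohomology whose lowest-degree component is exactly (\ref{eq_thm_main}).

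Next I would use Theorem~\ref{thm_char} to obtain the vanishings $R^q\pi_*\Omega_Y^i(\log E) = 0$ for $q \geq 1$ and $i < p$ from the hypothesis $\widetilde{\alpha}(Z) \geq p$, together with a small twisted variant (using that $0 < \alpha < 1$ keeps $\lfloor \alpha D\rfloor$ inside the SNC combinatorics of $E$, which after a local coordinate computation in charts adapted to $E$ extends the vanishings to the twisted sheaves appearing in $C_\alpha^\bullet$). These vanishings degenerate both hypercohomology spectral sequences onto the row containing the bottom term; the surviving terms are exactly $R^\bullet\pi_*$ of the source and target of (\ref{eq_thm_main}). For $q \neq p$ both sides then compute $\mathrm{gr}^F_p$ pieces that must vanish on $X$, giving the isomorphism of part (i), while at $q = p$ the injectivity comes from the fact that (\ref{eq_thm_main}) is induced by an inclusion of subcomplexes.

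Part (ii) then follows: under the identification just made, the map (\ref{eq_thm_main}) at $q=p$ becomes, up to a global twist by $\omega_X$, the inclusion $I_p(\alpha Z) \hookrightarrow \cO_X$, which is an isomorphism iff $I_p(\alpha Z)=\cO_X$. Combined with \cite{MPV} and the observation that for $\alpha \in (0,1) \cap \Q$ we can replace $\alpha$ by a slightly larger $\alpha' > \alpha$ without changing $\lfloor \alpha D \rfloor$ (and hence without changing the map), we obtain the equivalence with the strict inequality $\widetilde{\alpha}(Z) > p + \alpha$. The main obstacle is the first step: matching (\ref{eq_thm_main}) with the comparison of log-resolution complexes from \cite{MPQ} requires an explicit local computation on $E$, including bookkeeping of Poincar\'e residues and of the interaction of the Hodge filtration on the nearby cycles with the floor function $\lfloor \alpha D\rfloor$; this is where the hypothesis $\widetilde{\alpha}(Z) \geq p$ is genuinely needed, to kill the error terms coming from higher graded pieces of the filtration that would otherwise obstruct the collapse of the spectral sequence onto the bottom row.
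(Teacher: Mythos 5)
Your proposal hinges on identifying the two--term map (\ref{eq_thm_main}) with a comparison of the Hodge--ideal complexes of \cite[Section~8]{MPQ}, and this is where it breaks down. The complex computing $I_p(\alpha Z)$ on the log resolution sits in cohomological degrees $-p,\ldots,0$ with terms $\Omega_Y^{n-p+i}({\rm log}\,E)$ (suitably twisted) in degree $-p+i$; for the hypercohomology spectral sequence to collapse onto the degree $-p$ term you would need the higher direct images of the terms in degrees $-p+1,\ldots,0$ to be controlled, and these are sheaves of forms of degree $n-p+1,\ldots,n$ twisted by \emph{effective} multiples of $E$. The vanishings supplied by Theorem~\ref{thm_char} concern $R^q\pi_*\Omega_Y^i({\rm log}\,E)$ for $i\leq p$ (low--degree forms, untwisted, and only under the stronger hypothesis $\widetilde{\alpha}(Z)>p$, whereas part (i) assumes only $\widetilde{\alpha}(Z)\geq p$), so they say nothing about those terms; indeed already $\pi_*$ of the degree--$0$ term $\omega_Y(pE-\lfloor\alpha D\rfloor)$ is nonzero, so no such collapse occurs. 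Relatedly, the asserted identification of (\ref{eq_thm_main}) at $q=p$ with $I_p(\alpha Z)\otimes\omega_X\hookrightarrow\omega_X$ cannot be right for $p\geq 1$: the target $R^p\pi_*\Omega_Y^{n-p}({\rm log}\,E)(-E)$ is not $\omega_X$, and even the weaker claim that the cokernel is $(\cO_X/I_p)\otimes\omega_X$ would require identifying the Hodge ideal with a microlocal/higher multiplier ideal, which is a nontrivial theorem and false in general (they agree only ``up to the unit ideal'' in the relevant range). The introduction of the paper flags exactly this obstacle as the reason for not using the \cite{MPQ} description.

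The step you are missing is the replacement of the localization (which governs Hodge ideals) by the nearby cycles $\psi_{f,\alpha}(\cO_X)$, for which $\widetilde{\alpha}(Z)>p+\alpha$ is detected by the single graded piece ${\rm Gr}^F_{p+1}\psi_{f,\alpha}(\cO_X)$ (Corollaries~\ref{cor1_char} and \ref{cor2_char}). The actual proof constructs an explicit filtered Koszul--type resolution of $V^{\alpha}B_g$ on the log resolution (Theorem~\ref{thm_resolution}), from which ${\rm Gr}^F_{p+1-n}{\rm DR}_Y\psi_{g,\alpha}(\cO_Y)$ is a \emph{single} sheaf up to shift, namely $\big(\cO_Y(-D_{\alpha})/\cO_Y(-D_{>\alpha})\big)\otimes\Omega^{n-1-p}_{Y/\A^1}({\rm log}\,E)[p]$; the two--term comparison (\ref{eq_thm_main}) then arises from the short exact sequence of twists $0\to\cO_Y(-D_{>\alpha})\to\cO_Y(-D_{\alpha})\to\cdot\to 0$, transported from relative to absolute log forms by the 5--lemma, and finally chained over the finitely many jumps between $0$ and $\alpha$. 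Your final remark about replacing $\alpha$ by a slightly larger $\alpha'$ without changing $\lfloor\alpha D\rfloor$ is correct and is indeed how the strict inequality is extracted, but it does not repair the central identification.
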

Note that if $p=0$, then $\Omega_Y^{n-p}({\rm log}\,E)(-E)=\omega_Y$, so the assertion in ii) says that ${\rm lct}(X,Z)>\alpha$ if and only if $\cJ(X,\alpha Z)=\cO_X$. 

Rewriting the criterion in Theorem~\ref{thm_main} via Grothendieck duality and using the vanishing in Theorem~\ref{thm_char} gives the following reformulation:

\begin{cor}\label{dual_formulation}
Let $Z$ be a reduced hypersurface in $X$. If $\alpha\in (0,1)\cap\Q$ and $p\in \Z_{\geq 0}$ is such that $\widetilde{\alpha}(Z)>p$, then $\widetilde{\alpha}(Z)>p+\alpha$ if and only if
$$R^q\pi_*\Omega_Y^p({\rm log}\,E)\big(\lfloor \alpha D\rfloor\big)=0\quad\text{for all}\quad q\geq 1.$$
\end{cor}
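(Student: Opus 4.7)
My plan is to rewrite the criterion of Theorem~\ref{thm_main}(ii) using the perfect pairing of log differentials and Grothendieck duality for $\pi$, and then to import the vanishing provided by Theorem~\ref{thm_char}. The standard identification
\[
\Omega_Y^{n-p}(\log E)(-E) \cong \cHom_{\cO_Y}\bigl(\Omega_Y^p(\log E),\omega_Y\bigr),
\]
coming from the perfect pairing $\Omega_Y^p(\log E)\otimes \Omega_Y^{n-p}(\log E)\to \omega_Y(E)$, twists to
\[
\Omega_Y^{n-p}(\log E)(-E-\lfloor\alpha D\rfloor)\cong \cHom_{\cO_Y}\bigl(\Omega_Y^p(\log E)(\lfloor\alpha D\rfloor),\omega_Y\bigr).
\]
Because $\pi$ is a proper birational morphism of smooth varieties, $\pi^{!}\omega_X=\omega_Y$, and Grothendieck duality identifies the derived-category version of~(\ref{eq_thm_main}) with the Grothendieck dual of the natural inclusion
\[
R\pi_*\Omega_Y^p(\log E)\hookrightarrow R\pi_*\Omega_Y^p(\log E)(\lfloor\alpha D\rfloor).
\]

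Next, I would invoke Theorem~\ref{thm_main}(i)-(ii), which says that, assuming $\widetilde{\alpha}(Z)>p$, the inequality $\widetilde{\alpha}(Z)>p+\alpha$ is equivalent to (\ref{eq_thm_main}) being an isomorphism for every $q$, i.e.\ to the derived-level map being a quasi-isomorphism. By the duality above, this is in turn equivalent to the inclusion $R\pi_*\Omega_Y^p(\log E)\to R\pi_*\Omega_Y^p(\log E)(\lfloor\alpha D\rfloor)$ being a quasi-isomorphism. Theorem~\ref{thm_char} at exponent $p$ (together with the trivial case $p=0$, handled by $R\pi_*\cO_Y=\cO_X$) then yields $R^q\pi_*\Omega_Y^p(\log E)=0$ for all $q\geq 1$, so the source $R\pi_*\Omega_Y^p(\log E)=\pi_*\Omega_Y^p(\log E)$ sits in degree zero. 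The quasi-isomorphism condition therefore breaks into two pieces: $R^q\pi_*\Omega_Y^p(\log E)(\lfloor\alpha D\rfloor)=0$ for all $q\geq 1$, and
\[
\pi_*\Omega_Y^p(\log E)\hookrightarrow \pi_*\Omega_Y^p(\log E)(\lfloor\alpha D\rfloor)
\]
is an isomorphism.

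To close the loop, I would check that this last $H^0$-level inclusion is automatically an isomorphism, so that only the $R^{\geq 1}$-vanishing remains. Since $\alpha<1$ and the strict transform of $Z$ has coefficient $1$ in $D$, the divisor $\lfloor\alpha D\rfloor$ is $\pi$-exceptional, so $\pi(\Supp\lfloor\alpha D\rfloor)\subseteq X$ has codimension at least two. The two torsion-free sheaves coincide on the complement of this codimension-two locus, and the reflexivity of $\pi_*\Omega_Y^p(\log E)$---a standard property of the direct image of log differentials under a log resolution---forces the inclusion to be an equality.

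I expect this reflexivity step to be the only subtle ingredient: it is precisely what allows the corollary to be phrased purely in terms of vanishing of higher direct images, without also requiring an explicit matching of the $\pi_*$'s. Everything else is a formal consequence of Grothendieck duality and the two previously stated theorems.
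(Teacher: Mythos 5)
Your proposal is correct and follows essentially the same route as the paper: dualize via the perfect pairing on log differentials and Grothendieck duality, use Theorem~\ref{thm_char} to kill $R^{q}\pi_*\Omega_Y^p(\log E)$ for $q\geq 1$, and use reflexivity of $\pi_*\Omega_Y^p(\log E)$ to dispose of the $H^0$ comparison. The only point to be careful about is that this reflexivity is not a formal property of arbitrary log resolutions --- it comes from the GKKP extension theorem, which needs $(X,Z)$ log canonical; this is supplied here by $\widetilde{\alpha}(Z)>p\geq 1$ (with $p=0$ trivial), exactly as in Remark~\ref{rmk_push_forward}.
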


Note that if in Corollary~\ref{dual_formulation} we take $\alpha=1-\epsilon$, for $0<\epsilon\ll 1$, then we recover the fact that if $\widetilde{\alpha}(Z)>p$, then
we have $\widetilde{\alpha}(Z)\geq p+1$ if and only if the canonical morphism
$\gamma_p\colon \Omega_Z^p\to\underline{\Omega}_Z^p$ is an isomorphism, where $\underline{\Omega}_Z^p$ is the $p$th Du Bois complex of $Z$, see 
Remark~\ref{rmk_DuBois} below. In fact, it is known that $\widetilde{\alpha}(Z)\geq p+1$ if and only if $\gamma_i$ is an isomorphism for all $i\leq p$ (that is,
$Z$ has $p$-\emph{Du Bois singularities}), see \cite{MOPW} and \cite{Saito_et_al}. 

While Corollary~\ref{dual_formulation} can be used to compute the minimal exponent in some simple cases (most notably, 
for ordinary singularities), it is not well-suited for computations (even the case of ordinary singularities requires considerable work). 
We view this characterization of the minimal exponent as having rather theoretical interest. 
For example, as an application of Corollary~\ref{dual_formulation}, we prove 
the constancy of the minimal exponent in a proper family of hypersurfaces that admit a simultaneous log resolution
(see Theorem~\ref{thm_last}). This answers a question of Radu Laza.

\medskip

The key step in the proof of Theorem~\ref{thm_main} is to describe the condition for having $\widetilde{\alpha}(Z)>p+\alpha$, when $\alpha\in (0,1)\cap\Q$ and we know that
$\widetilde{\alpha}(Z)\geq p+\alpha$. We now outline the main ideas in the proof. Since the assertion is local, we may assume that $Z$ is defined by some $f\in\cO_X(X)$. 
In this case, using the $V$-filtration associated to $f$, one defines the nearby cycles 
$$\psi_f(\cO_X)=\bigoplus_{\alpha\in (0,1]\cap\Q}\psi_{f,\alpha}(\cO_X).$$
This underlies a mixed Hodge module in the sense of Saito's theory \cite{Saito-MHM}.
In particular, it is a left module\footnote{We follow the convention that the underlying $\cD$-modules of Hodge modules are left $\cD$-modules.} over the sheaf of differential operators $\cD_X$ on $X$, endowed with a good filtration, the Hodge filtration $F_{\bullet}\psi_f(\cO_X)$,
which is the direct sum of the Hodge filtrations $F_{\bullet}\psi_{f,\alpha}(\cO_X)$. 
Therefore, the de Rham complex ${\rm DR}_X\psi_{f,\alpha}(\cO_X)$ becomes a filtered complex and for every $q$, the graded piece ${\rm Gr}^F_q{\rm DR}_X\psi_{f,\alpha}(\cO_X)$
is a complex of coherent $\cO_X$-modules. 

Saito's description of $\widetilde{\alpha}(Z)$ in terms of the $V$-filtration implies that if $\alpha\in (0,1)\cap\Q$ and $\widetilde{\alpha}(Z)\geq p+\alpha$, then we have 
$\widetilde{\alpha}(Z)>p+\alpha$ if and only if $\cH^0\big({\rm Gr}^F_{p+1-n}{\rm DR}_X\psi_{f,\alpha}(\cO_X)\big)=0$. On the other hand, given our log resolution $\pi$,
we have
$$\psi_{f,\alpha}(\cO_X)\simeq\pi_*\psi_{g,\alpha}(\cO_Y),$$
where $g=f\circ\pi$ and the push-forward is in the derived category of mixed Hodge modules. Saito's Strictness theorem thus implies that
$${\rm Gr}^F_{p+1-n}{\rm DR}_X\psi_{f,\alpha}(\cO_X)\simeq {\mathbf R}\pi_*{\rm Gr}^F_{p+1-n}{\rm DR}_Y\psi_{g,\alpha}(\cO_Y).$$
The main point is to give an explicit description of ${\rm Gr}^F_{p+1-n}{\rm DR}_Y\psi_{g,\alpha}(\cO_Y)$: the advantage is that in this case the divisor defined by $g$
has simple normal crossings. We do this by giving an explicit filtered resolution of $\psi_{g,\alpha}(\cO_Y)$ for all $\alpha\in (0,1]$ in Theorem~\ref{thm_resolution}: this is our main technical result. We note that a related result appears in \cite[Proposition~15.12.5]{MHM_project} (we are grateful to Ruijie Yang for pointing this out).

The paper is structured as follows. In Section~\ref{section2}, after a brief review concerning $\cD$-modules and Hodge modules, we recall the $V$-filtration, the nearby cycles, and the 
description of the minimal exponent that we need. The following section contains the proof of Theorem~\ref{thm_char}. We describe the filtered resolution of $\psi_{g,\alpha}(\cO_Y)$
in Section~\ref{section_resolution} and give the proofs of Theorem~\ref{thm_main} and Corollary~\ref{dual_formulation} in Section~\ref{section_final}. The final section
contains the application to proper families of hypersurfaces having simultaneous log resolutions.

\subsection{Acknowledgment} We are grateful to Brad Dirks, Mihnea Popa, Christian Schnell, and Ruijie Yang for many useful discussions. We would like to thank also the anonymous referee for a detailed list of suggestions to improve the presentation.

\section{Background concerning Hodge modules, $V$-filtrations, and the minimal exponent}\label{section2}

Let $X$ be a smooth, irreducible complex algebraic variety and let $n=\dim(X)$. 
We denote by $\cD_X$ the sheaf of differential operators on $X$. 
We begin by recalling some basic facts about $\cD$-modules; for details, we refer the reader to \cite{HTT}. Unless explicitly mentioned otherwise, by a $\cD$-module
we mean a \emph{left} $\cD_X$-module. By an algebraic system of coordinates on an open subset $U\subseteq X$, we mean $x_1,\ldots,x_n\in\cO_X(U)$ such that
$dx_1,\ldots,dx_n$ trivialize $\Omega_U$. We denote the corresponding basis of ${\mathcal T}_U={\mathcal Der}_{\C}(\cO_U)$ by $\partial_{x_1},\ldots,\partial_{x_n}$.

For a $\cD_X$-module $\cM$, its \emph{de Rham complex} ${\rm DR}_X\cM$ is the complex
$$0\to\cM\overset{\nabla}\longrightarrow \Omega_X^1\otimes_{\cO_X}\cM\overset{\nabla}\longrightarrow\ldots\overset{\nabla}\longrightarrow\Omega_X^n\otimes_{\cO_X}\cM\to 0,$$
placed in cohomological degrees $-n,\ldots,0$. 
Recall that a \emph{good filtration} on a coherent $\cD_X$-module $\cM$ is an increasing filtration $F_{\bullet}\cM$ by coherent $\cO_X$-modules,
compatible with the order filtration $F_{\bullet}\cD_X$ on $\cD_X$, such that ${\rm Gr}^F_{\bullet}(\cM)$ is locally finitely generated over ${\rm Gr}^F_{\bullet}(\cD_X)\simeq {\mathcal Sym}(\cT_X)$.
Given such a filtration, the de Rham complex of $\cM$ becomes a filtered complex such that its graded piece ${\rm Gr}^F_{p-n}{\rm DR}_X\cM$ is the complex of coherent
$\cO_X$-modules
$$0\to {\rm Gr}^F_{p-n}(\cM)\to \Omega_X^1\otimes_{\cO_X}{\rm Gr}^F_{p-n+1}(\cM)\to\ldots\to \Omega_X^n\otimes_{\cO_X}{\rm Gr}^F_{p}(\cM)\to 0.$$

Recall that there is an equivalence of categories between left and right $\cD_X$-modules, such that the right $\cD_X$-module $\cM^r$ corresponding to a left $\cD_X$-module $\cM$
has underlying $\cO_X$-module $\omega_X\otimes_{\cO_X}\cM$. One first checks that $\omega_X=\Omega_X^n$ has a canonical right $\cD_X$-module structure described in a chart with algebraic coordinates $x_1,\ldots,x_n$
by 
$$(hdx_1\wedge\ldots\wedge dx_n)\cdot\partial_{x_i}=-\tfrac{\partial h}{\partial x_i}dx_1\wedge\ldots\wedge dx_n.$$
The right action of $D\in {\rm Der}_{\C}(\cO_X)$ on $\omega_X\otimes_{\cO_X}\cM$ is then given by
\begin{equation}\label{eq_description_right_action}
(\eta\otimes u)\cdot D=\eta D\otimes u-\eta\otimes Du\quad\text{for all}\quad \eta\in\omega_X, u\in\cM.
\end{equation}
For future reference, we mention the following identity, which can be easily checked, concerning the $\cD_X$-action on $\omega_X$: if $x_1,\ldots,x_n$ are local algebraic coordinates on $X$, then
\begin{equation}\label{eq_identity_omega}
\sum_{i=1}^n(dx_i\wedge\eta)\partial_{x_i}=-d\eta\quad\text{for all}\quad\eta\in\Omega_X^{n-1}.
\end{equation}
We also note that if  $F_{\bullet}\cM$ is a good filtration on the left coherent $\cD_X$-module $\cM$, then we get a corresponding good filtration on $\cM^r$, with the indexing convention given by
$$F_{p-n}\cM^r=\omega_X\otimes_{\cO_X}F_p\cM\quad\text{for all}\quad p\in\Z.$$

\medskip

While we will not be using Saito's theory of Hodge modules explicitly, some of the objects we will consider and some of the results we will make use of have their proper context in this theory,
for which we refer to \cite{Saito-MHP} and \cite{Saito-MHM}. We only mention here that a mixed Hodge module consists of several pieces of data, the most relevant for us being a $\cD_X$-module
with a fixed good filtration, the \emph{Hodge filtration}. A key example is ${\mathbf Q}_X[n]$, whose underlying $\cD_X$-module is $\cO_X$, with the Hodge filtration given by
$F_p\cO_X=\cO_X$ for $p\geq 0$ and $F_p\cO_X=0$ for $p<0$. 

All morphisms of mixed Hodge modules are strict\footnote{This means that if $u\colon (\cM,F_{\bullet}\cM)\to (\cN,F_{\bullet}\cN)$ is such a morphism,
then $u(\cM)\cap F_p\cN=u(F_p\cM)$ for all $p\in\Z$.} with respect to the Hodge filtration and the category of mixed Hodge modules on $X$ is an abelian category. The corresponding bounded derived category is denoted 
$D^b\big({\rm MHM}(X)\big)$ and these derived categories satisfy a 6-functor formalism. By taking the graded pieces of the filtered de Rham complex, we obtain exact functors
$${\rm Gr}^F_{p-n}{\rm DR}_X\colon D^b\big({\rm MHM}(X)\big)\to D^b_{\rm coh}(X),$$
for all $p\in\Z$, where on the right-hand side we have the bounded derived category of $\cO_X$-modules, with coherent cohomology. 

\medskip

We next recall the notion of $V$-filtration, due to Malgrange and Kashiwara, which will play an important role in what follows. 
We only discuss the case of the $\cD_X$-module $\cO_X$:
for a discussion of the general case, for details, and for the role of the $V$-filtration in the theory of Hodge modules, we refer to \cite[Section~3.1]{Saito-MHP} (see also \cite{Schnell} for a nice introduction).

Suppose that $f\in\cO_X(X)$ is a nonzero regular function defining the hypersurface $Z$. We consider the graph embedding
 $\iota\colon X\hookrightarrow X\times {\mathbf A}^1$, given by $\iota(x)=\big(x,f(x)\big)$, and let
 $B_f:=\iota_+(\cO_X)$.
 We have\footnote{This is a consequence of the well-known fact that if $i\colon X\hookrightarrow Y$ is a closed immersion of smooth varieties, of pure relative dimension $r$, then there is an isomorphism of left $\cD_Y$-modules $i_+(\cO_X)\simeq\cH^r_{i(X)}(\cO_Y)$, where on the right-hand side we have the $r$th local cohomology sheaf of $\cO_Y$ along $i(X)$. This isomorphism can be checked using the description of $i_+(\cO_X)$ in \cite[Example~1.3.5]{HTT} and the computation of local cohomology via the \v{C}ech complex.}
 $$B_f=\cO_X[t]_{f-t}/\cO_X[t]
 =\bigoplus_{j\geq 0}\cO_X\partial_t^j\delta_f,$$
 where $\delta_f$ denotes the class of $\tfrac{1}{f-t}$. 
For computations, it is useful to keep in mind the following rules concerning the action of $\cD_{X\times\A^1}$:
\begin{equation}\label{eq1_action_on_B_f}
D\cdot h\partial_t^j\delta=D(h)\partial_t^j\delta-hD(f)\partial_t^{j+1}\delta\quad\text{for all}\quad D\in {\rm Der}_{\C}(\cO_X), h\in\cO_X, j\in\Z_{\geq 0},
\end{equation} 
\begin{equation}\label{eq2_action_on_B_f}
t\cdot h\partial_t^j\delta=fh\partial_t^j\delta-jh\partial_t^{j-1}\delta\quad\text{for all}\quad h\in\cO_X, j\in\Z_{\geq 0}.
\end{equation}
 
 The $V$-filtration is a decreasing, exhaustive filtration $(V^{\alpha}B_f)_{\alpha\in\Q}$ on $B_f$ by quasi-coherent $\cD_X$-modules,
 which is discrete and left-continuous\footnote{This means that there is a positive integer $\ell$ such that $V^{\alpha}B_f$ is constant for $\alpha\in \big(\tfrac{i-1}{\ell},\tfrac{i}{\ell}]$ for all $i\in\Z$.},
 and uniquely characterized by a few properties. We only mention two of these:
 \begin{enumerate}
 \item[i)] $t\cdot V^{\alpha}B_f\subseteq V^{\alpha+1}B_f$ and $\partial_t\cdot V^{\alpha}B_f\subseteq V^{\alpha-1}B_f$ for all $\alpha\in\Q$;
 \item[ii)] $(\partial_tt-\alpha)$ is nilpotent on ${\rm Gr}_V^{\alpha}B_f$ for all $\alpha\in\Q$.
 \end{enumerate}
 Here, we put ${\rm Gr}_V^{\alpha}B_f=V^{\alpha}B_f/V^{>\alpha}B_f$, where $V^{>\alpha}B_f=V^{\alpha+\epsilon}B_f$, for $0<\epsilon\ll 1$. 
  
 The Hodge filtration on $B_f$ is given by 
 $$F_{p+1}B_f=\bigoplus_{0\leq j\leq p}\cO_X\partial_t^j\delta_f\quad\text{for all}\quad p\in\Z.$$
 For every $\alpha\in\Q$, we get an induced Hodge filtration on ${\rm Gr}_V^{\alpha}B_f$.

 If $\alpha\in (0,1]\cap\Q$, then the {nearby cycles} of $\cO_X$ with respect to $f$, corresponding to $\alpha$, are given by 
 $\psi_{f,\alpha}(\cO_X):={\rm Gr}_V^{\alpha}B_f$ and 
 $$\psi_f(\cO_X):=\bigoplus_{\alpha\in (0,1]}\psi_{f,\alpha}(\cO_X),$$
 with the Hodge filtration given by the direct sum of the filtrations on each $\psi_{f,\alpha}(\cO_X)$.
 It is an important fact that $\psi_f(\cO_X)$, with the Hodge filtration, is the filtered $\cD_X$-module
 underlying a mixed Hodge module on $X$.

 When working with right $\cD$-modules (in which case $\cO_X$ is replaced by the right $\cD_X$-module $\omega_X$), the $V$-filtration on $B_f$ induces a $V$-filtration
 on $B_f^r=\iota_+(\omega_X)=\omega_X\otimes_{\cO_X}B_f$. We write the elements of $B_f^r$ as $\sum_{i\geq 0}\eta_i\partial_t^i\delta$, with $\eta_i\in\omega_X$.
 In this setting, it is customary to index the $V$-filtration increasingly, according to the convention
 $$V_{\alpha}B_f^r=\omega_X\otimes_{\cO_X}V^{-\alpha}B_f\quad\text{for all}\quad\alpha\in\Q.$$
 We also note that, according to our conventions about filtrations and the fact that we have $\dim(X\times\A^1)=n+1$, the
 Hodge filtration on $B_f^r$ is given by
 $$F_{p-n}B_f^r=\bigoplus_{0\leq i\leq p}(\omega_X\otimes_{\cO_X}\cO_X\partial_t^i\delta).$$
 Since $\partial_t$ acts on the right on $B_f^r$ via the action of $-\partial_t$ on $B_f$ on the left, it follows from condition ii) in the definition of the $V$-filtration
 that the action of $\theta-\alpha$ on ${\rm Gr}_{\alpha}^V(B_f^r)$ is nilpotent, where $\theta$ is the Euler operator $t\partial_t$. 

 \begin{rmk}\label{right_nearby_cycles}
 We note that one has to be careful when switching to right $\cD$-modules in the description of nearby cycles: the right $\cD_X$-module $\psi_{f,-\alpha}(\omega_X):=\psi_{f,\alpha}(\cO_X)^r$
 is canonically isomorphic to ${\rm Gr}^V_{-\alpha}\iota_+(\omega_X)$, which carries an induced Hodge filtration $F_{\bullet}$ from $\iota_+(\omega_X)$. 
 However, since we do the switch left-right on an $n$-dimensional variety (while for $B_f$ this was done on an $(n+1)$-dimensional variety), the Hodge filtration on $\psi_{f,-\alpha}(\omega_X)$ is given by $F_{\bullet}[-1]$, that is
 $$F_p\psi_{f,-\alpha}(\omega_X)=F_{p-1}{\rm Gr}^V_{-\alpha}\iota_+(\omega_X)\quad\text{for all}\quad p\in\Z.$$
 \end{rmk}
 
 \begin{rmk}
 As we have already mentioned, one can define $V$-filtrations with respect to $f$ for more general $\cD_X$-modules ${\mathcal M}$ (for example, for all $\cD_X$-modules
 underlying Hodge modules on $X$). This is a filtration on $\iota_+(\cM)$ that satisfies analogous properties to those we discussed in the case $\cM=\cO_X$. 
 For details, see \cite{Saito-MHP}. We will only need this briefly in the proof of Proposition~\ref{prop_birat_nearby_cycles} below.
 \end{rmk}
 
  For future reference, we state here a well-known fact about the behavior of (graded de Rham complexes of) nearby cycles 
 under birational morphisms.
 
 \begin{prop}\label{prop_birat_nearby_cycles}
 Let $X$ be a smooth, irreducible, complex algebraic variety and $f\in\cO_X(X)$ nonzero. If $\pi\colon Y\to X$ is a proper morphism
 that is an isomorphism over the complement of the hypersurface defined by $f$, with $Y$ smooth, and $g=f\circ\pi$, then 
 \begin{equation}\label{eq2_funct_nearby}
{\rm Gr}^F_i{\rm DR}_X\psi_{f,\alpha}(\cO_X)\simeq {\mathbf R}\pi_*\big({\rm Gr}^F_i{\rm DR}_Y\psi_{g,\alpha}(\cO_Y)\big)
\end{equation}
for all $i\in\Z$ and all $\alpha\in (0,1]\cap\Q$.
 \end{prop}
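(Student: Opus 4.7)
\emph{Proof proposal.} The plan is to deduce this from Saito's theory of mixed Hodge modules, using three inputs: (i) $\pi_*\cO_Y\simeq\cO_X$ in $D^b\big({\rm MHM}(X)\big)$; (ii) the nearby cycles functor commutes with proper direct image of mixed Hodge modules; and (iii) the graded de Rham functor intertwines the MHM pushforward $\pi_*$ with the derived pushforward ${\mathbf R}\pi_*$ on coherent sheaves, as a consequence of Saito's strictness theorem.

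For (i), since $X$ is smooth, it has rational singularities and hence $R\pi_*\cO_Y\simeq\cO_X$. This lifts to an isomorphism at the level of mixed Hodge modules: by the decomposition theorem, any additional summand in $\pi_*\cO_Y$ would be supported on a proper subvariety of $X$ and would contribute nontrivially to the underlying $\cO_X$-complex, contradicting $R\pi_*\cO_Y\simeq\cO_X$. Applying (ii) with $g=f\circ\pi$ yields the natural isomorphism $\psi_f\circ\pi_*\simeq\pi_*\circ\psi_g$ as functors $D^b\big({\rm MHM}(Y)\big)\to D^b\big({\rm MHM}(X)\big)$; combining with (i) and projecting onto the $\alpha$-eigencomponent gives
$$\psi_{f,\alpha}(\cO_X)\simeq \pi_*\psi_{g,\alpha}(\cO_Y) \inside D^b\big({\rm MHM}(X)\big).$$
Applying ${\rm Gr}^F_i{\rm DR}_X$ to both sides and invoking (iii) then produces the stated isomorphism \eqref{eq2_funct_nearby}.

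The main technical input, and the only step I would treat as a black box from Saito's theory, is (ii). Its proof proceeds via the graph embedding: setting $\tilde\pi=\pi\times{\rm id}_{\A^1}\colon Y\times\A^1\to X\times\A^1$, functoriality of $\iota_+$ together with $\pi_*\cO_Y\simeq\cO_X$ gives $\tilde\pi_*B_g\simeq B_f$ by base change, after which the compatibility of the $V$-filtration with $\tilde\pi_*$ follows from the uniqueness characterization of the $V$-filtration and strictness in Saito's theory.
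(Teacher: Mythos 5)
Your overall strategy (push forward through the graph embedding, commute nearby cycles with proper direct image, then apply strictness to identify the graded de Rham pieces) is the same as the paper's, and steps (ii) and (iii) are fine. However, step (i) contains a genuine error: it is \emph{not} true that $\pi_*\cO_Y\simeq\cO_X$ in $D^b\big({\rm MHM}(X)\big)$, and the justification you give is fallacious. Already for the blow-up of a point in a surface, the decomposition theorem produces an extra summand (a Tate-twisted Hodge module supported at the center, reflecting the $H^2$ of the exceptional fiber), yet $R\pi_*\cO_Y\simeq\cO_X$ still holds. The reason there is no contradiction is that the underlying $\cO_X$-complex you invoke is only the single graded piece ${\rm Gr}^F_{-n}{\rm DR}$ of the pushforward, and the extra summands contribute to \emph{other} graded pieces of the Hodge filtration (their lowest nonzero ${\rm Gr}^F$ sits at a different index because of the Tate twists). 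So a nonzero summand supported on a proper subvariety can perfectly well contribute nothing to $R\pi_*\cO_Y$. The same error propagates to your final paragraph, where you assert $\tilde\pi_*B_g\simeq B_f$: this again fails in general, as $\tilde\pi_*B_g$ acquires extra cohomology supported on $X\times\{0\}$.

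The correct repair --- and this is exactly what the paper does --- is to identify \emph{where} the extra summands live rather than to argue they vanish. Since $\pi$ is an isomorphism over $X\smallsetminus Z$, one has $\cH^0\big((\pi\times\id)_+B_g\big)\simeq B_f\oplus\cM$ with $\cM$ supported on $X\times\{0\}$, and $\cH^j\big((\pi\times\id)_+B_g\big)$ supported on $X\times\{0\}$ for $j\neq 0$. Any $\cD_{X\times\A^1}$-module supported on $X\times\{0\}$ satisfies ${\rm Gr}_V^{\alpha}=0$ for $\alpha>0$ (\cite[Lemme~3.1.3]{Saito-MHP}), so after applying ${\rm Gr}_V^{\alpha}$ with $\alpha\in(0,1]$ --- which commutes with the proper pushforward by \cite[Proposition~3.3.17]{Saito-MHP} --- all the spurious summands die and one is left with $\psi_{f,\alpha}(\cO_X)\simeq\pi_+\psi_{g,\alpha}(\cO_Y)$. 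With this substitution for your step (i), the rest of your argument goes through as written.
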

 
 \begin{proof}
 Indeed,
 on the one hand, if we consider $\pi\times\id\colon Y\times {\mathbf A}^1\to X\times {\mathbf A}^1$, then
 we have isomorphisms of filtered $\cD$-modules
$${\rm Gr}_V^{\alpha}{\mathcal H}^j\big((\pi\times\id)_+B_g\big)\simeq \cH^j(\pi_+{\rm Gr}_V^{\alpha}B_g)\quad\text{for all}\quad j\in\Z$$
(see \cite[Proposition~3.3.17]{Saito-MHP}).
Note that here, on the left-hand side, we have the $V$-filtration with respect to a $\cD$-module which is not the structure sheaf.
 On the other hand, since $\pi$ is an isomorphism over the complement of the hypersurface $Z$ defined by $f$, we have
$\cH^0\big((\pi\times \id)_+(B_g)\big)\simeq B_f\oplus\cM$, where $\cM$ is a filtered $\cD_{X\times \A^1}$-module supported on $X\times \{0\}$, while
$\cH^j\big((\pi\times \id)_+(B_g)\big)$ is supported on $X\times \{0\}$ for all $j\neq 0$. Since $\alpha>0$, we have ${\rm Gr}_V^{\alpha}(\cN)=0$ for every 
$\cD_{X\times \A^1}$-module $\cN$ supported on $X\times \{0\}$ by \cite[Lemme~3.1.3]{Saito-MHP}. We thus conclude that
\begin{equation}\label{eq1_funct_nearby}
\psi_{f,\alpha}(\cO_X)\simeq\pi_+\big(\psi_{g,\alpha}(\cO_Y)\big).
\end{equation}
Using the compatibility of the graded de Rham complex with push-forward for (direct summands of) Hodge modules, which is a consequence
of Saito's Strictness theorem (see \cite[Th\'{e}or\`{e}me~5.3.1]{Saito-MHP}), we obtain the formula in the proposition.
 \end{proof}
 
 \medskip
 
 We next recall the definition of Saito's minimal exponent, see \cite{Saito-B}. 
 Let $Z$ be a nonempty hypersurface in $X$ and suppose first that $Z$ is defined by some $f\in\cO_X(X)$.
 Recall that the Bernstein-Sato polynomial of $f$ is the monic polynomial $b_f(s)$ of minimal degree such that
 $$b_f(s)f^s\in\cD_X[s]\cdot f^{s+1}.$$
 It is easy to check that since $f$ is not invertible, we have $b_f(-1)=0$. The \emph{minimal exponent}
 $\widetilde{\alpha}(f)$ is the negative of the largest root of $b_f(s)/(s+1)$ (with the convention that this is $\infty$ if
 $b_f(s)=s+1$). By a result of Kashiwara \cite{Kashiwara}, all roots of $b_f(s)$ are in $\Q_{<0}$. In particular, we have
 $\widetilde{\alpha}(f)\in\Q_{>0}\cup\{\infty\}$. 
 
 For an arbitrary hypersurface $Z$, we choose a finite open cover $X=\bigcup_{i\in I}U_i$ such that $Z\cap U_i$ is defined in $U_i$ by some 
 $f_i\in\cO_X(U_i)$. In this case, the minimal exponent of $Z$ is given by
 $$\widetilde{\alpha}(Z)=\min_i\widetilde{\alpha}(f_i),$$
 where the minimum is over the noninvertible $f_i$.
 One can check that this is independent of the choice of cover and defining equations. By a result of Lichtin and Koll\'{a}r
 (see \cite[Section~10]{Kollar}), we have ${\rm lct}(X,Z)=\min\big\{\widetilde{\alpha}(Z),1\big\}$. 
 
 \begin{rmk}\label{rmk_local_min_exponent}
 Actually, the definition in \cite{Saito-B} concerns the \emph{local minimal exponent} $\widetilde{\alpha}_P(Z)$ of $Z$ at a point $P\in Z$. The definition is the same as above,
 but using the \emph{local} Bernstein-Sato polynomial $b_{f,P}(s)$ of a local equation $f$ of $Z$ at $P$. It is well-known that if $Z$ is defined by $f$, then
 $b_f(s)$ is the least common multiple of the $b_{f,P}(s)$, for all $P\in Z$. Moreover, given any point $P\in Z$, there is an open neighborhood $U$ of $P$
 such that $b_{f,P}(s)=b_{f\vert_U}(s)$. This implies that $\widetilde{\alpha}(Z)=\min_{P\in Z}\widetilde{\alpha}_P(Z)$ and for every $P\in Z$,
 there is an open neighborhood $U$ of $P$ such that $\widetilde{\alpha}_P(Z)=\widetilde{\alpha}(Z\cap U)$.
 \end{rmk}
 
 We will make use of the description of $\widetilde{\alpha}(Z)$ in terms of the $V$-filtration, which we now recall.
 Suppose that $Z$ is defined by the nonzero $f\in\cO_X(X)$. The following result is due to Saito:
 
 \begin{thm}\label{char_Saito}
 If $\gamma\in\Q_{>0}$ and we write
 $\gamma=p+\alpha$, where $p\in {\mathbf Z}_{\geq 0}$ and $\alpha\in (0,1]\cap {\mathbf Q}$, then the following are equivalent:
 \begin{enumerate}
 \item[i)] $\widetilde{\alpha}(f)\geq p+\alpha$.
 \item[ii)] Around every point in $X$, there is a section of $V^{\alpha}B_f$ of the form
 $\partial_t^p\delta+\sum_{i=1}^ph_i\partial_t^{p-i}\delta$.
 \item[iii)] $\partial_t^p\delta\in V^{\alpha}B_f$.
 \item[iv)] $F_{p+1}B_f\subseteq V^{\alpha}B_f$. 
 \end{enumerate}
 \end{thm}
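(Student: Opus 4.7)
The plan is to split the four-way equivalence into two blocks: the elementary chain (ii) $\Leftrightarrow$ (iii) $\Leftrightarrow$ (iv), which follows from direct manipulations with the $V$-filtration on $B_f$, and the substantive equivalence (iii) $\Leftrightarrow$ (i), which encodes Saito's link between the Bernstein-Sato polynomial and the $V$-filtration.

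For the elementary block, (iv) $\Rightarrow$ (iii) is immediate from $\partial_t^p\delta\in F_{p+1}B_f$, and (iii) $\Rightarrow$ (ii) is trivial by taking $h_i=0$. For (iii) $\Rightarrow$ (iv), I use the $\cO_X$-stability of $V^\alpha B_f$ together with $tV^\alpha B_f\subseteq V^{\alpha+1}B_f\subseteq V^\alpha B_f$: the identity $t\partial_t^k\delta=f\partial_t^k\delta-k\partial_t^{k-1}\delta$ from (\ref{eq2_action_on_B_f}) propagates membership downward, yielding $\partial_t^j\delta\in V^\alpha B_f$ for all $0\leq j\leq p$. For (ii) $\Rightarrow$ (iii), I induct on $p$: given $\xi=\partial_t^p\delta+\sum_{i=1}^p h_i\partial_t^{p-i}\delta\in V^\alpha B_f$, expanding via (\ref{eq2_action_on_B_f}) yields $t\xi-f\xi=-p\eta$, where $\eta$ has the same form but with $p$ replaced by $p-1$; since $t\xi\in V^{\alpha+1}\subseteq V^\alpha$ and $f\xi\in V^\alpha$, we get $\eta\in V^\alpha B_f$. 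Applying the inductive hypothesis and the already-established (iii) $\Leftrightarrow$ (iv) for $p-1$ gives $F_pB_f\subseteq V^\alpha B_f$, and hence $\partial_t^p\delta=\xi-\sum h_i\partial_t^{p-i}\delta\in V^\alpha B_f$.

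The main obstacle is (iii) $\Leftrightarrow$ (i). My approach combines the commutation $\partial_t^p\cdot(-\partial_t t)^k=(-\partial_t t-p)^k\partial_t^p$ with the Bernstein-Sato relation $b_f(-\partial_t t)\delta\in\cD_{X\times\A^1}\cdot t\delta\subseteq V^1B_f$ (obtained by substituting $s=-\partial_t t$, $f^s=\delta$, $f^{s+1}=t\delta$ in the defining identity $b_f(s)f^s\in\cD_X[s]f^{s+1}$). Applying $\partial_t^p$ and using $\partial_t^pt=t\partial_t^p+p\partial_t^{p-1}$ together with $(-\partial_t t)\cdot t=t(-\partial_t t-1)$ produces, modulo $tB_f\subseteq V^1B_f\subseteq V^\alpha B_f$, an identity of the form
\[
b_f(-\partial_t t-p)\,\partial_t^p\delta\equiv p\,Q(-\partial_t t-p)\,\partial_t^{p-1}\delta\pmod{V^\alpha B_f}
\]
for a suitable $Q\in\cD_X[s]$. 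Using the nilpotency of $\partial_t t-\beta$ on $\gr_V^\beta B_f$ and the Malgrange--Kashiwara identification of the $V$-filtration jumps of $B_f$ with the roots of $b_f$, the condition $\partial_t^p\delta\in V^\alpha B_f$ translates precisely into the statement that $b_f(s)/(s+1)$ has no roots in $\bigl(-(p+\alpha),0\bigr)$, i.e., $\widetilde{\alpha}(f)\geq p+\alpha$. The delicate point is to track multiplicities and the shift by $p$ correctly; I would do this by induction on $p$, with the base case $p=0$ reducing to the standard statement that $\delta\in V^\alpha B_f$ iff $b_f$ has no roots in $(-\alpha,0)$, and feeding the inductive step through the congruence above, which reduces the case $p$ modulo $V^\alpha$ to the case $p-1$.
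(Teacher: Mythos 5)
Your elementary block (ii) $\Leftrightarrow$ (iii) $\Leftrightarrow$ (iv) is correct and is essentially the paper's argument: the authors prove ii) $\Rightarrow$ iv) by the same induction, applying $-\tfrac{1}{p}(t-f)$ to drop the order by one, and note that iv) $\Rightarrow$ iii) $\Rightarrow$ ii) are trivial; your downward propagation $t\partial_t^k\delta=f\partial_t^k\delta-k\partial_t^{k-1}\delta$ for (iii) $\Rightarrow$ (iv) is a correct variant. The difference is in how the remaining equivalence with i) is handled: the paper does not prove it but cites Saito \cite[(1.3.8)]{Saito-MLCT}, whereas you attempt a direct derivation from the Bernstein--Sato relation, and this is where your proposal has genuine gaps.

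First, a concrete error: you reduce modulo ``$tB_f\subseteq V^1B_f$'', but this containment is false. One only has $t\cdot V^{\beta}B_f\subseteq V^{\beta+1}B_f$; since the $V$-filtration is exhaustive but $B_f\neq V^{\beta}B_f$ for $\beta\gg 0$, the image $tB_f$ is not contained in $V^1B_f$ (already $t\partial_t\delta=f\partial_t\delta-\delta$ need not lie in $V^1B_f$). So the congruence you write is not justified without first controlling where $\partial_t^jt\delta$ sits, which is essentially the statement being proved. Second, even granting a corrected congruence, it only bears on the implication i) $\Rightarrow$ iii): it lets you deduce membership of $\partial_t^p\delta$ in $V^{\alpha}B_f$ from hypotheses on the roots of $b_f$. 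The converse iii) $\Rightarrow$ i) requires producing roots of $b_f(s)/(s+1)$ from the failure of $\widetilde{\alpha}(f)\geq p+\alpha$, which rests on Malgrange's theorem that $b_f$ is the \emph{minimal} polynomial of $s=-\partial_tt$ on $\cD_X[s]\delta/\cD_X[s]t\delta$ together with a comparison of this quotient with the graded pieces ${\rm Gr}_V^{\beta}B_f$; your appeal to ``the Malgrange--Kashiwara identification of the $V$-filtration jumps with the roots of $b_f$'' is not a citable statement in the form you need, and the sentence ``translates precisely into'' is exactly the content of Saito's microlocal computation that the paper outsources to \cite{Saito-MLCT}. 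As written, the crux of the theorem is asserted rather than proved.
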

 
 \begin{proof}
 For the equivalence between i) and ii), see \cite[(1.3.8)]{Saito-MLCT} (where the condition in ii) is formulated in terms of the microlocal $V$-filtration). 
 The implications iv)$\Rightarrow$iii)$\Rightarrow$ii) are trivial, hence it is enough to prove, by induction on $p$, the implication ii)$\Rightarrow$iv). 
 The case $p=0$ is clear, hence we may assume $p\geq 1$. If $u=\partial_t^p\delta+\sum_{i=1}^ph_i\partial_t^{p-i}\delta\in V^{\alpha}B_f$, then 
 $$-\tfrac{1}{p}(t-f)u=\partial_t^{p-1}\delta+\sum_{i=1}^{p-1}g_i\partial_t^{p-1-i}\delta\in V^{\alpha}B_f,$$
 for suitable $g_1,\ldots,g_{p-1}\in\cO_X$. By induction, we get $F_{p}B_f\subseteq V^{\alpha}B_f$ and together with the hypothesis in ii), we conclude that $F_{p+1}B_f\subseteq V^{\alpha}B_f$. 
 \end{proof}
 
 We will make use of the following two immediate consequences of the above theorem:
 
 \begin{cor}\label{cor1_char}
 If $\alpha\in (0,1)\cap \Q$ and $p\in\Z_{\geq 0}$,
 then the following hold:
 \begin{enumerate}
 \item[i)] If $\widetilde{\alpha}(f)\geq p$, then ${\rm Gr}^F_{i}\psi_{f,\alpha}(\cO_X)=0$ for all $i\leq p$.
 \item[ii)] If $\widetilde{\alpha}(f)\geq p+\alpha$, then 
 $\widetilde{\alpha}(f)>p+\alpha$ if and only if ${\rm Gr}^F_{p+1}\psi_{f,\alpha}(\cO_X)=0$. 
 \end{enumerate}
 \end{cor}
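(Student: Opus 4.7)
The plan is to derive both parts of the corollary by unwinding the definition of the induced Hodge filtration on $\psi_{f,\alpha}(\cO_X)=\mathrm{Gr}_V^{\alpha}B_f$, namely
$$F_i\psi_{f,\alpha}(\cO_X)=\bigl(F_iB_f\cap V^{\alpha}B_f\bigr)\big/\bigl(F_iB_f\cap V^{>\alpha}B_f\bigr),$$
and then invoking Theorem~\ref{char_Saito}(iv), which converts the minimal exponent condition $\widetilde{\alpha}(f)\geq p+\alpha$ into the $\cO_X$-module containment $F_{p+1}B_f\subseteq V^{\alpha}B_f$.

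For part i), I would first apply Theorem~\ref{char_Saito}(iv) with the decomposition $p=(p-1)+1$ to obtain $F_pB_f\subseteq V^1B_f$. Since $\alpha\in (0,1)$, the discreteness of the $V$-filtration guarantees $V^1B_f\subseteq V^{>\alpha}B_f$. Therefore $F_iB_f\subseteq F_pB_f\subseteq V^{>\alpha}B_f$ for every $i\leq p$, which forces $F_i\psi_{f,\alpha}(\cO_X)=0$ and hence $\mathrm{Gr}^F_i\psi_{f,\alpha}(\cO_X)=0$. The boundary case $p=0$ requires no hypothesis, since $F_0B_f=0$ under the indexing convention of the paper.

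For part ii), the standing hypothesis $\widetilde{\alpha}(f)\geq p+\alpha$ yields $F_{p+1}B_f\subseteq V^{\alpha}B_f$ via Theorem~\ref{char_Saito}(iv), while the strictly weaker condition $\widetilde{\alpha}(f)\geq p$ combined with part i) gives $F_p\psi_{f,\alpha}(\cO_X)=0$. These two facts together imply
$$\mathrm{Gr}^F_{p+1}\psi_{f,\alpha}(\cO_X)=F_{p+1}\psi_{f,\alpha}(\cO_X)=F_{p+1}B_f\big/\bigl(F_{p+1}B_f\cap V^{>\alpha}B_f\bigr),$$
which vanishes exactly when $F_{p+1}B_f\subseteq V^{>\alpha}B_f$. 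By the discreteness of the $V$-filtration, this containment is equivalent to $F_{p+1}B_f\subseteq V^{\alpha+\epsilon}B_f$ for some $\epsilon>0$, and because $\alpha<1$ we may choose $\epsilon$ small enough that $\alpha+\epsilon\leq 1$; Theorem~\ref{char_Saito}(iv) then translates this into $\widetilde{\alpha}(f)\geq p+\alpha+\epsilon$, i.e.\ $\widetilde{\alpha}(f)>p+\alpha$.

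There is essentially no serious obstacle: once Theorem~\ref{char_Saito} is granted, the corollary is a careful unpacking of filtration data. The only points requiring attention are the interplay between the strict and non-strict inequalities on $\widetilde{\alpha}(f)$ and the values $V^{\alpha},V^{>\alpha},V^1$ of the $V$-filtration, which is handled uniformly by the discreteness property, and verifying that the indexing conventions for $F_{\bullet}B_f$ align (in particular that $F_0B_f=0$).
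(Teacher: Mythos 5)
Your proposal is correct and follows essentially the same route as the paper: unwind the induced Hodge filtration on $\mathrm{Gr}_V^{\alpha}B_f$ and translate everything through Theorem~\ref{char_Saito}, using $F_pB_f\subseteq V^1B_f\subseteq V^{>\alpha}B_f$ for i) and $F_{p+1}B_f\subseteq V^{\alpha}B_f$ plus the vanishing of $F_p$ for ii). The only cosmetic difference is that the paper identifies $\mathrm{Gr}^F_{p+1}\psi_{f,\alpha}(\cO_X)$ with $\cO_X/J$ and invokes condition iii) of the theorem, whereas you work directly with condition iv); these are interchangeable.
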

 
 \begin{proof}
 Let us put $F_qV^{\beta}B_f:=F_qB_f\cap V^{\beta}B_f$. It follows from the definition of the induced filtration
 that for every $i$, we have
 $${\rm Gr}^F_{i}\psi_{f,\alpha}(\cO_X)=F_iV^{\alpha}B_f/(F_{i-1}V^{\alpha}B_f+F_iV^{>\alpha}B_f).$$
 The assertion in i) thus follows from the fact that since $\widetilde{\alpha}(f)\geq p$, by Theorem~\ref{char_Saito} we have
 $$F_pB_f\subseteq V^1B_f\subseteq V^{>\alpha}B_f.$$
 
 Suppose now that $\widetilde{\alpha}(f)\geq p+\alpha$. Using the theorem again, we see that $F_{p+1}B_f\subseteq V^{\alpha}B_f$. Since we also have
 $F_pB_f\subseteq V^{>\alpha}B_f$, we conclude that
 $${\rm Gr}^F_{p+1}\psi_{f,\alpha}(\cO_X)\simeq \cO_X/J,$$
 where $J=\{h\in\cO_X\mid h\partial_t^p\delta\in V^{>\alpha}B_f\}$. The assertion in ii) follows by one more application of the theorem.
 \end{proof}
 
 It is convenient to also give a version of the above corollary in terms of the graded de Rham complex of the nearby cycles:
 
 \begin{cor}\label{cor2_char}
  If $\alpha\in (0,1)\cap \Q$ and $p\in\Z_{\geq 0}$ is such that
 $\widetilde{\alpha}(f)\geq p$, then the following hold:
 \begin{enumerate}
 \item[i)] ${\rm Gr}^F_{i-n}{\rm DR}_X\psi_{f,\alpha}(\cO_X)=0\quad\text{for}\quad i\leq p$.
 \item[ii)] $\cH^q\big({\rm Gr}^F_{p+1-n}{\rm DR}_X\psi_{f,\alpha}(\cO_X)\big)=0\quad\text{for}\quad q\neq 0$.
 \item[iii)] $\cH^0\big({\rm Gr}^F_{p+1-n}{\rm DR}_X\psi_{f,\alpha}(\cO_X)\big)\simeq \omega_X\otimes_{\cO_X}{\rm Gr}^F_{p+1}\psi_{f,\alpha}(\cO_X)$.
 \end{enumerate}
 In particular, if $\widetilde{\alpha}(f)\geq p+\alpha$, then we have $\widetilde{\alpha}(Z)>p+\alpha$ if and only if
$$\cH^0\big({\rm Gr}^F_{p+1-n}{\rm DR}_X\psi_{f,\alpha}(\cO_X)\big)=0.$$
 \end{cor}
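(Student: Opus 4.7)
The plan is to deduce all three assertions and the final statement directly from Corollary~\ref{cor1_char} by inspecting the terms of the graded de Rham complex of $\psi_{f,\alpha}(\cO_X)$. Recall that, by the definition recalled earlier in the section, the complex ${\rm Gr}^F_{q-n}{\rm DR}_X\psi_{f,\alpha}(\cO_X)$ has its term in cohomological degree $j-n$ equal to $\Omega_X^j\otimes_{\cO_X}{\rm Gr}^F_{q-n+j}\psi_{f,\alpha}(\cO_X)$, for $j=0,\ldots,n$.

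First I would prove i). For $q=i\leq p$, the indices of the graded pieces appearing in the complex range over $q-n+j$ for $j=0,\ldots,n$, that is, they lie between $i-n$ and $i\leq p$. Since $\widetilde{\alpha}(f)\geq p$, part i) of Corollary~\ref{cor1_char} gives ${\rm Gr}^F_k\psi_{f,\alpha}(\cO_X)=0$ for every $k\leq p$, so every term of the complex vanishes, proving i).

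Next I would establish ii) and iii) simultaneously by examining the complex ${\rm Gr}^F_{p+1-n}{\rm DR}_X\psi_{f,\alpha}(\cO_X)$. Its term in degree $j-n$ is $\Omega_X^j\otimes_{\cO_X}{\rm Gr}^F_{p+1-n+j}\psi_{f,\alpha}(\cO_X)$. Since $\widetilde{\alpha}(f)\geq p$, part i) of Corollary~\ref{cor1_char} forces the vanishing of the graded piece whenever $p+1-n+j\leq p$, i.e., for $j\leq n-1$. Thus the only possibly nonzero term is the one in cohomological degree $0$, namely $\Omega_X^n\otimes_{\cO_X}{\rm Gr}^F_{p+1}\psi_{f,\alpha}(\cO_X)=\omega_X\otimes_{\cO_X}{\rm Gr}^F_{p+1}\psi_{f,\alpha}(\cO_X)$, which immediately yields both ii) and iii).

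Finally, for the last assertion, assume $\widetilde{\alpha}(f)\geq p+\alpha$; since $\alpha>0$ this already implies $\widetilde{\alpha}(f)\geq p$, so iii) applies and identifies $\cH^0\big({\rm Gr}^F_{p+1-n}{\rm DR}_X\psi_{f,\alpha}(\cO_X)\big)$ with $\omega_X\otimes_{\cO_X}{\rm Gr}^F_{p+1}\psi_{f,\alpha}(\cO_X)$. Since $\omega_X$ is an invertible $\cO_X$-module, this vanishes if and only if ${\rm Gr}^F_{p+1}\psi_{f,\alpha}(\cO_X)=0$, which by part ii) of Corollary~\ref{cor1_char} is equivalent to $\widetilde{\alpha}(f)>p+\alpha$. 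There is no serious obstacle in this proof; the only thing to be careful about is the indexing convention relating $\widetilde{\alpha}(Z)$ and $\widetilde{\alpha}(f)$, and matching the degrees in the graded de Rham complex with those appearing in Corollary~\ref{cor1_char}.
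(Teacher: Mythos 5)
Your proof is correct and follows essentially the same route as the paper: both arguments read off the terms of the graded de Rham complex, apply part i) of Corollary~\ref{cor1_char} to kill all terms with filtration index at most $p$, identify the remaining degree-$0$ term with $\omega_X\otimes_{\cO_X}{\rm Gr}^F_{p+1}\psi_{f,\alpha}(\cO_X)$, and conclude via part ii) of that corollary. The indexing you use matches the paper's convention for ${\rm Gr}^F_{p-n}{\rm DR}_X$, so there is nothing to correct.
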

 
 \begin{proof}
 It follows from the definition of the graded de Rham complex that the complex ${\rm Gr}^F_{i-n}{\rm DR}_X\psi_{f,\alpha}(\cO_X)$ involves
 ${\rm Gr}^F_j{\rm Gr}_V^{\alpha}B_f$, with $i-n\leq j\leq i$. In particular, it follows from assertion i) in the previous corollary that since 
 $\widetilde{\alpha}(f)\geq p$, all terms in the complex ${\rm Gr}^F_{i-n}{\rm DR}_X\psi_{f,\alpha}(\cO_X)$ are $0$ if $i\leq p$.
 Similarly, all terms in the complex ${\rm Gr}^F_{p+1-n}{\rm DR}_X\psi_{f,\alpha}(\cO_X)$ that lie in nonzero degrees are 0, while
 the term in degree $0$ is $\omega_X\otimes_{\cO_X}{\rm Gr}^F_{p+1}\psi_{f,\alpha}(\cO_X)$. This gives the assertions in i), ii), and iii).
 The last assertion in the corollary follows from iii) and the assertion ii) in the previous corollary.
 \end{proof}

\section{The condition $\widetilde{\alpha}(Z)>p$ for $p\in {\mathbf Z}_{>0}$}

In this section, we describe the condition for having $\widetilde{\alpha}(Z)>p$, where $p$ is a positive integer, in terms of a log resolution. As in the previous sections,
we consider a smooth, irreducible, $n$-dimensional complex algebraic variety, and a nonempty hypersurface $Z$ in $X$. Let $\pi\colon Y\to X$
be a log resolution of $(X,Z)$ that is an isomorphism over $X\smallsetminus Z$ and let $E=\pi^*(Z)_{\rm red}$.
Our goal is to prove Theorem~\ref{thm_char}.

\begin{rmk}\label{codim_sing_locus}
We will see in the proof of Theorem~\ref{thm_char} given below that in ii) we may only require that ${\rm codim}_Z(Z_{\rm sing})\geq\min\{3,2p\}$.
\end{rmk}

\begin{proof}[Proof of Theorem~\ref{thm_char}]
As we will see, 
this proof follows easily from the results in \cite{MP}. Let $r={\rm codim}_Z(Z_{\rm sing})$. 
Note first that the vanishing condition in (\ref{eq_thm_char}) is independent of the choice of log resolution: see \cite[Corollary~31.2]{MP3}.
Therefore we may assume that  $\pi$ is a composition of blow-ups with smooth centers lying over $Z_{\rm sing}$ and let $\pi_Z\colon \widetilde{Z}\to Z$
be the restriction of $\pi$ to the strict transform of $Z$. We write $E=\widetilde{Z}+F$, where $F$ is the sum of the $\pi$-exceptional divisors. With this notation,
it follows from \cite[Proposition~1.1]{MP} that 
\begin{equation}\label{eq2_thm_char}
R^q\pi_*\Omega_Y^{i+1}({\rm log}\,E)\simeq R^q{\pi_Z}_*\Omega_{\widetilde{Z}}^i({\rm log}\,F\vert_{\widetilde{Z}})
\end{equation}
for all $q\geq 1$ and $0\leq i\leq r-1$. 

On the other hand, by \cite[Theorem~E]{MP} or \cite[Appendix]{FL}, the condition $\widetilde{\alpha}(Z)>p$ is equivalent to the fact that $Z$ has $(p-1)$-rational singularities:
by definition, this means that, with the above notation, the canonical morphisms
\begin{equation}\label{eq3_prop1}
\Omega_Z^i\to {\mathbf R}{\pi_Z}_*\Omega^i_{\widetilde{Z}}({\rm log}\,F\vert_{\widetilde{Z}})
\end{equation}
are isomorphisms for $0\leq i\leq p-1$. 

We next note that if $\widetilde{\alpha}(Z)>p$, then $r\geq 2p$ by \cite[Proposition~7.4]{MP2}. In particular, we have $r\geq p+1$. 
In fact, this latter inequality also holds if we assume the condition in i) and the weak version of ii) mentioned in Remark~\ref{codim_sing_locus}. 
Indeed, this is clear if $p\leq 2$ by our assumption. If $p\geq 3$, then arguing by induction on $p$, we may assume that $\widetilde{\alpha}(Z)>p-1$,
in which case another application of \cite[Proposition~7.4]{MP2} gives $r\geq 2p-2\geq p+1$.

Since $r\geq p$, we deduce using the isomorphisms (\ref{eq2_thm_char}) 
that the vanishings in i) hold if and only if
$$R^q{\pi_Z}_*\Omega^i_{\widetilde{Z}}({\rm log}\,F\vert_{\widetilde{Z}})=0\quad\text{for all}\quad q\geq 1, i\leq p-1.$$
This is almost the condition for having $(p-1)$-rational singularities: the only missing condition is that
the canonical maps
\begin{equation}\label{eq4_prop1}
\Omega_Z^i\to {\pi_Z}_*\Omega^i_{\widetilde{Z}}({\rm log}\,F\vert_{\widetilde{Z}})
\end{equation}
are isomorphisms for $i\leq p-1$.
We are left with showing that this condition holds as well when i) and (the weak version of) ii) are satisfied.

This is clear if $p=1$, since in this case we assume $r\geq 2$, hence $Z$ is normal. On the other hand, if
$p\geq 2$, then we already know, by induction, that $\widetilde{\alpha}(Z)>1$, hence 
$Z$ has rational singularities by \cite{Saito-B}.
Therefore,
the morphism (\ref{eq4_prop1}) is an isomorphism
if and only if $\Omega_Z^i$ is reflexive 
(see for example \cite[Lemma~2.8]{MP}). 
By
\cite[Theorem~1.11]{Graf}, the canonical morphism
\begin{equation}\label{morphism_double_dual}
\Omega_Z^i\to (\Omega_Z^i)^{**}
\end{equation}
is injective if $i\leq r-1$ and surjective if $i\leq r-2$. Since $i\leq p-1\leq r-2$,
it follows that (\ref{morphism_double_dual}) is bijective, and thus $\Omega_X^i$ 
is reflexive for $i\leq p-1$.
This completes the proof of the theorem.
\end{proof}

\begin{rmk}
We can't completely drop condition ii) from the theorem. For example, suppose that $Z$ is a simple normal crossing divisor in $X$, with $Z$ not smooth, in which case $\widetilde{\alpha}(Z)=1$.
However, in this case we have $R^q\pi_*\Omega^i_Y({\rm log}\,E)=0$ for all $i\geq 0$ and all $q\geq 1$ by \cite[Theorem~31.1]{MP3}.
\end{rmk}

\begin{rmk}\label{rmk_push_forward}
If we assume that $\widetilde{\alpha}(Z)\geq 1$, then the pair $(X,Z)$ is log canonical, hence \cite[Theorem~1.5]{GKKP} implies that for every $i\leq n$, the sheaf
$\pi_*\Omega_Y^i({\rm log}\,E)$ is reflexive. Moreover, for every $\alpha\in\Q$, the sheaf $\pi_*\Omega_Y^i({\rm log}\,E)\big(\lfloor \alpha D\rfloor\big)$
is reflexive. Indeed, this follows via the projection formula if we show that for all $\alpha\in [0,1)$, we have
\begin{equation}\label{eq1_reflexive}
\pi_*\Omega_Y^i({\rm log}\,E)=\pi_*\Omega_Y^i({\rm log}\,E)\big(\lfloor \alpha D\rfloor\big).
\end{equation}
In order to see this, let $U$ be an open subset of $X$ such that $\pi$ is an isomorphism over $U$ and ${\rm codim}_X(X\smallsetminus U)\geq 2$.
If $j\colon U\hookrightarrow X$ is the inclusion, let us consider the commutative diagram
$$
\begin{tikzcd}
\pi_*\Omega_Y^i({\rm log}\,E)\rar{\varphi}\dar{u}  & \pi_*\Omega_Y^i({\rm log}\,E)\big(\lfloor \alpha D\rfloor\big)
\dar{v} \\
j_*j^*\pi_*\Omega_Y^i({\rm log}\,E)\rar{\varphi} \rar{\psi} & j_*j^*\pi_*\Omega_Y^i({\rm log}\,E)\big(\lfloor \alpha D\rfloor\big),
\end{tikzcd}
$$
in which all maps are the natural inclusions. Since $\pi_*\Omega_Y^i({\rm log}\,E)$ is reflexive, it follows that 
$u$ is an isomorphism. Furthermore, since $(X,Z)$ is log canonical, it follows that $Z$ is reduced, and thus $\lfloor \alpha D\rfloor=0$ on $\pi^{-1}(U)$,
hence $\psi$ is an isomorphism as well. This implies that $v$ is surjective, and since $\pi_*\Omega_Y^i({\rm log}\,E)\big(\lfloor \alpha D\rfloor\big)$
is clearly torsion-free, $v$ is also injective, hence an isomorphism. 
We thus conclude that both $v$ and $\varphi$ are isomorphisms, and thus (\ref{eq1_reflexive}) holds.

We note that for $p=0$, the same argument works if we only assume $Z$ reduced, since in this case it is clear that $\pi_*\cO_Y=\cO_X$ is reflexive.
\end{rmk}

\section{A filtered resolution of $\psi_{g,\alpha}(\cO_Y)$, when ${\rm div}(g)$ is an SNC divisor}\label{section_resolution}

In this section, we work on a smooth, irreducible, $n$-dimensional complex algebraic variety $Y$. We assume that we have a nonzero $g\in\cO_Y(Y)$
that defines a divisor $D$ supported on a reduced simple normal crossing divisor $E$. By Generic Smoothness, after possibly replacing $Y$ by an open neighborhood 
of $E$, we may and will assume that $g\colon Y\to\A^1$ is smooth over $\A^1\smallsetminus\{0\}$. 

We consider ${\rm dlog}(g)=\tfrac{dg}{g}\in\Gamma\big(Y,\Omega^1_Y({\rm log}\,E)\big)$ and let
$\Omega_{Y/\A^1}^1({\rm log}\,E)$ be defined by the short exact sequence
\begin{equation}\label{eq_def_rel_diff}
0\to \cO_Y\overset{j}\longrightarrow \Omega_Y^1({\rm log}\,E)\to \Omega_{Y/\A^1}^1({\rm log}\,E)\to 0,
\end{equation}
where $j$ is the injective map defined by the global section ${\rm dlog}(g)$. 

Note that $\Omega_{Y/\A^1}^1({\rm log}\,E)$ is a locally free sheaf of rank $n-1$. Indeed, 
this is clear on $Y\smallsetminus E$, since on this open subset $g$ is invertible, so 
$\Omega_{Y/\A^1}^1({\rm log}\,E)$ is the sheaf of relative differentials for the morphism $Y\smallsetminus E\to\A^1\smallsetminus\{0\}$,
which we assume to be smooth. 
On the other hand,
given $P\in E$, we can find an algebraic system of coordinates $y_1,\ldots,y_n$ in an affine neighborhood $U$ of $P$, such that $E\vert_U$ is defined by $(y_1\cdots y_r)$, for some $r\geq 1$. In this case,
we can write $g=u\cdot\prod_{i=1}^ry_i^{a_i}$, with $u$ invertible, so that
$${\rm dlog}(g)=\dlog(u)+\sum_{i=1}^ra_i\dlog(y_i).$$
It is then clear that  $\Omega_{Y/\A^1}^1({\rm log}\,E)$ is free in a suitable neighborhood of $P$, generated by the classes of $\dlog(y_j)$, for $2\leq j\leq r$, and $dy_k$, for $r+1\leq k\leq n$. 

We put $\Omega_{Y/\A^1}^p({\rm log}\,E):=\wedge^p \Omega_{Y/\A^1}^1({\rm log}\,E)$ for every $p\geq 0$.
Note that the short exact sequence (\ref{eq_def_rel_diff}) induces short exact sequences
\begin{equation}\label{eq2_def_rel_diff}
0\to \Omega_{Y/\A^1}^{\ell-1}({\rm log}\,E)\to \Omega_Y^{\ell}({\rm log}\,E)\to \Omega_{Y/\A^1}^{\ell}({\rm log}\,E)\to 0,
\end{equation}
for $1\leq \ell\leq n$, with the injective map being given by $\eta\mapsto  {\rm dlog}(g) \wedge \eta $. 

Let us consider the log de Rham complex of the left $\cD_Y$-module $\cD_Y$ with respect to $E$:
$$0\to \cD_Y\overset{\nabla}\longrightarrow\Omega_Y^1({\rm log}\,E)\otimes_{\cO_Y}\cD_Y\overset{\nabla}\longrightarrow\ldots\overset{\nabla}\longrightarrow
\Omega_Y^n({\rm log}\,E)\otimes_{\cO_Y}\cD_Y\to 0.$$
Recall that if $y_1,\ldots,y_n$ are local coordinates on $Y$, then the map $\nabla$ is given by
\begin{equation}\label{formula_nabla1}
\nabla(\eta\otimes P)=d\eta\otimes P+\sum_{i=1}^n(dy_i\wedge\eta)\otimes \partial_{y_i}P.
\end{equation}
Note that each term in the complex has a natural structure of right $\cD_Y$-module induced by the right $\cD_Y$-module structure of $\cD_Y$
and that the maps $\nabla$ are $\cD_Y$-linear.
Therefore, this is a complex of right $\cD_Y$-modules.

If $G$ is an effective divisor supported on $E$, then we have an induced complex ${\mathcal C}_{G}$:
$$0\to \cO_Y(-G)\otimes_{\cO_Y}\cD_Y\overset{\nabla}\longrightarrow\cO_Y(-G)\otimes_{\cO_Y}\Omega_Y^1({\rm log}\,E)\otimes_{\cO_Y}\cD_Y\overset{\nabla}\longrightarrow\ldots$$
$$\ldots\overset{\nabla}\longrightarrow
\cO_Y(-G)\otimes_{\cO_Y}\Omega_Y^n({\rm log}\,E)\otimes_{\cO_Y}\cD_Y\to 0.$$
Indeed, if $y_1,\ldots,y_n$ are local coordinates on $Y$ such that $E$ is defined by $(y_1\cdots y_r)$ and $h$ is a local equation of $G$, then
using the fact that $d(h\eta)=h\big({\rm dlog}(h)\wedge\eta+d\eta\big)$, it follows from (\ref{formula_nabla1}) that
\begin{equation}\label{formula_nabla2}
\nabla(h\eta\otimes P)=h ({\rm dlog}(h)\wedge\eta)\otimes P+ h d\eta\otimes P+\sum_{i=1}^n h(dy_i\wedge\eta)\otimes\partial_{y_i}P.
\end{equation}
In particular, ${\mathcal C}_G$ is a subcomplex of the log de Rham complex of $\cD_Y$.
Moreover, it is straightforward to check using (\ref{formula_nabla2}) that $\nabla$ induces maps
$$\cO_Y(-G)\otimes_{\cO_Y}\big({\rm dlog}(g)\wedge \Omega_Y^{\ell-1}({\rm log}\,E)\big)\to \cO_Y(-G)\otimes_{\cO_Y}\big({\rm dlog}(g)\wedge \Omega_Y^{\ell}({\rm log}\,E)\big),$$
hence using (\ref{eq2_def_rel_diff}), we see that ${\mathcal C}_G$ induces a complex $\overline{\mathcal C}_G$:
$$0\to \cO_Y(-G)\otimes_{\cO_Y}\cD_Y\longrightarrow\cO_Y(-G)\otimes_{\cO_Y}\Omega_{Y/\A^1}^1({\rm log}\,E)\otimes_{\cO_Y}\cD_Y\longrightarrow\ldots$$
$$\ldots\longrightarrow
\cO_Y(-G)\otimes_{\cO_Y}\Omega_{Y/\A^1}^{n-1}({\rm log}\,E)\otimes_{\cO_Y}\cD_Y\to 0,$$
that we view placed in cohomological degrees $-(n-1),\ldots,0$. 
If we put 
\begin{equation}\label{formula_filtration}
F_{k-n}\overline{{\mathcal C}}_G^{-q}:=\cO_Y(-G)\otimes_{\cO_Y}\Omega_{Y/\A^1}^{n-1-q}({\rm log}\,E)\otimes_{\cO_Y}  F_{k-q-1}\cD_Y\quad\text{for all}\quad k\in\Z,
\end{equation}
then $\overline{{\mathcal C}}_G$ becomes a complex of filtered right $\cD_Y$-modules.

Note that the short exact sequences (\ref{eq2_def_rel_diff}) induce a short exact sequence of complexes
\begin{equation}\label{eq_seq_complexes}
0\to \overline{\mathcal C}_{G}[-1] \xrightarrow{ \dlog(g)\wedge-} \mathcal{C}_{G} \to \overline{\mathcal C}_{G}\to 0
\end{equation}
(we follow the convention that for a complex ${\mathcal C}$ and $q\in\Z$, the differentials in the complex ${\mathcal C}[q]$ are given by multiplying the 
corresponding differentials in ${\mathcal C}$ with $(-1)^q$).
We thus obtain an endomorphism $N_G$ of $\overline{\mathcal{C}}_G$ in $D^b_{\rm coh}(\cD_X)$, such that we have an exact triangle
\begin{equation}\label{eq_ng}
	\overline{\mathcal C}_{G}[-1] \xrightarrow{ \dlog(g)\wedge-} \mathcal{C}_{G} \to \overline{\mathcal C}_{G} \xrightarrow{N_G} \overline{\mathcal C}_{G}.
\end{equation}

The following is the main result of this section. For ease of comparison with the existing results in the literature, we state this result in the setting of right $\cD$-modules.
Recall that $D={\rm div}(g)$ and for every $\alpha\in \Q_{\geq 0}$, let $D_{\alpha}=\lceil \alpha D\rceil$ and $D_{>\alpha}=D_{\alpha+\epsilon}$ for $0<\epsilon\ll 1$,
so that $D_{>\alpha}=\lfloor \alpha D\rfloor+E$. 

\begin{thm}\label{thm_resolution}
With the above notation, for every $\alpha\in \Q_{>0}$, the following hold:
\begin{enumerate}
	\item[i)] The complex $\overline{\mathcal C}_{D_{\alpha}}$ gives a resolution of $\big(V_{-\alpha}B_g^r,F_{\bullet}[-1]\big)$ in the category of filtered right $\cD_Y$-modules.
	\item[ii)] The quotient
$\overline{\mathcal C}_{D_{\alpha}}/\overline{\mathcal C}_{D_{>\alpha}}$, with the induced filtration, gives a resolution of the $\cD_Y$-module $\big(\gr_{-\alpha}^VB_g^r, F_{\bullet}[-1]\big)$
in the category of filtered right $\cD_Y$-modules.
	\item[iii)]  The endomorphism $N_{D_\alpha}$ induces an operator $N_\alpha$ on the $0$-th cohomology of $\overline{\mathcal C}_{D_{\alpha}}$, which gets
identified with the endomorphism $-\theta$ on $V_{-\alpha}B^r_g$.
\end{enumerate}
\end{thm}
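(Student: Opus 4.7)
The argument is local on $Y$, so I fix an SNC chart with coordinates $y_1,\dots,y_n$ in which $g = u\cdot y_1^{a_1}\cdots y_r^{a_r}$ for a unit $u$ and $a_i\geq 1$; after shrinking I may absorb $u$ and assume $u=1$. The plan is to (a) construct an explicit filtered map $\varepsilon\colon \overline{\mathcal C}_{D_\alpha}\to(V_{-\alpha}B_g^r)[0]$ in the category of complexes of filtered right $\cD_Y$-modules, (b) show $\varepsilon$ is a quasi-isomorphism by a Koszul-type computation, and (c) deduce (ii) and (iii) from (i). For (a), I use the identification $\Omega^{n-1}_{Y/\A^1}(\log E)\cong\omega_Y(E)$ (coming from $\Omega^n_{Y/\A^1}(\log E)=0$ and the tail of (\ref{eq2_def_rel_diff}), via $\eta\mapsto\dlog(g)\wedge\eta$) to rewrite $\overline{\mathcal C}^0_{D_\alpha}\cong\omega_Y(E-D_\alpha)\otimes_{\cO_Y}\cD_Y$ and define $\varepsilon$ on the top term by $\omega\otimes P\mapsto(\omega\otimes\delta)\cdot P$. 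The essential local content is that the generator $\omega_0 := \prod_{i=1}^r y_i^{\lceil\alpha a_i\rceil-1}\,dy_1\wedge\cdots\wedge dy_n$ of $\omega_Y(E-D_\alpha)$ produces an element $\omega_0\otimes\delta$ inside $V_{-\alpha}B_g^r$: this follows from a direct calculation, using (\ref{eq1_action_on_B_f}), (\ref{eq2_action_on_B_f}), and $t\delta = g\delta$, giving the identity $(y_i\partial_{y_i}+a_i t\partial_t)(y^c\delta)=(c_i-a_i)\,y^c\delta$ for $1\leq i\leq r$, which together with the uniqueness of the Kashiwara--Malgrange filtration pins down $V^\alpha B_g$ in SNC coordinates and places $\omega_0\otimes\delta$ on the boundary of $V_{-\alpha}B_g^r$. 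Checking that $\varepsilon$ commutes with the differential of the complex reduces to the identity (\ref{eq_identity_omega}) together with $t\delta = g\delta$.

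The main obstacle lies in (b): proving $\varepsilon$ is a quasi-isomorphism. I would compare both sides with a Koszul complex in $n-1$ variables built from $(\dlog y_1,\ldots,\dlog y_r, dy_{r+1},\ldots,dy_n)$ modulo $\dlog(g)$. In SNC coordinates, both $B_g^r$ and $\overline{\mathcal C}_{D_\alpha}$ split along monomial summands indexed by the $y$-exponent class, and for each summand the vanishing of the negative-degree cohomology of $\overline{\mathcal C}_{D_\alpha}$ reduces to the exactness of an explicit Koszul complex, a classical consequence of the logarithmic comparison theorem. The filtration shift $[-1]$ is absorbed by the convention of Remark~\ref{right_nearby_cycles}: the $\omega_Y$-factor from the left-right switch accounts for the offset $-n$ in the Hodge indexing, while the additional $-1$ reflects that $\delta\in F_1 B_g$ corresponds under the convention change to the appropriate piece of $B_g^r$.

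For (ii) I apply (i) to both $\alpha$ and to $\alpha'>\alpha$, then take the long exact sequence of cohomology associated with the short exact sequence $0\to\overline{\mathcal C}_{D_{>\alpha}}\to\overline{\mathcal C}_{D_\alpha}\to\overline{\mathcal C}_{D_\alpha}/\overline{\mathcal C}_{D_{>\alpha}}\to 0$; this identifies the quotient with a filtered resolution of $\gr^V_{-\alpha}B_g^r$. For (iii), I unpack the connecting endomorphism $N_{D_\alpha}$ coming from the triangle (\ref{eq_ng}): representing a class in $H^0(\overline{\mathcal C}_{D_\alpha})$ by $\omega\otimes P$, lifting to $\mathcal C_{D_\alpha}^0$, and measuring the image under $\dlog(g)\wedge$ back in $\overline{\mathcal C}_{D_\alpha}$, a direct computation with $t\delta=g\delta$ yields exactly the action of $-t\partial_t=-\theta$ on $V_{-\alpha}B_g^r$.
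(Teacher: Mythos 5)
Your overall strategy coincides with the paper's: the augmentation $\varepsilon$ is exactly the map $\sigma_\alpha$ used there (via the identification $\overline{\mathcal C}^0_{D_\alpha}\simeq\omega_Y(E-D_\alpha)\otimes_{\cO_Y}\cD_Y$), the compatibility with the differential is checked with (\ref{eq_identity_omega}), the reduction is to monomial coordinates, exactness is established by a Koszul computation on graded pieces, ii) follows from the long exact sequence for $\alpha$ and $\alpha+\epsilon$, and iii) is a direct computation on the generator. So the route is the same; but two points in your step (b) are genuine gaps rather than routine details.

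First, you reduce (b) to ``the vanishing of the negative-degree cohomology'' of $\overline{\mathcal C}_{D_\alpha}$. That is only half of the statement: you must also show that the augmentation induces an isomorphism $\cH^0\big(\gr^F_p\overline{\mathcal C}_{D_\alpha}\big)\simeq\gr^F_{p-1}V_{-\alpha}B_g^r$ for every $p$, and this is where the substance lies. It requires the full bifiltered description of the $V$-filtration in the SNC case (Saito's formula $F_pV_{-\alpha}B_g^r=\omega_Y(-D_\alpha+E)\delta\cdot F_{p+n}(\cD_Y[\theta])$, which you propose to rederive from the relations $(y_i\partial_{y_i}+a_it\partial_t)$ and uniqueness --- itself a nontrivial sub-task, not just a consequence of uniqueness applied naively), together with an explicit computation of $\cH^0$ of the graded Koszul complex as $\big(\cO_Y[z_1,\ldots,z_r]/(Q_2,\ldots,Q_r)\big)_{p+n-1}$ and a monomial-by-monomial matching of bases on both sides. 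Your appeal to the ``logarithmic comparison theorem'' is also off target: what makes the Koszul complex exact in negative degrees is that the principal symbols of the operators $\tfrac{y_iz_i}{a_i}-\tfrac{y_1z_1}{a_1}$ and $z_{r+1},\ldots,z_n$ form a regular sequence in $\gr^F_\bullet\cD_Y$, not a comparison of log de Rham complexes. Second, your local chart assumes $r\geq 1$, i.e.\ that you are at a point of $E$; but $V_{-\alpha}B_g^r$ is not supported on $E$, so the claim must also be verified on $Y\smallsetminus E$, where $g$ is an invertible smooth function and a separate (easy, but necessary) argument identifying $\overline{\mathcal C}_{D_\alpha}$ with a resolution of $B_g^r$ is needed. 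Relatedly, ``absorbing the unit $u$'' is not possible by an algebraic coordinate change; one has to pass to the analytic local ring (using faithful flatness of $\cO_{Y,P}\to\cO_{Y^{\rm an},P}$) or invoke the comparison of $V$-filtrations for $g$ and $uh$.
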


\begin{rmk}
	Steenbrink already noticed in his seminal work~\cite{Steenbrink} that the quotient complex $\overline{\cC}_{D_0}/\overline{\cC}_{D_{>0}}$ is quasi-isomorphic to the perverse nearby cycles of $\C_Y[n]$
	with respect to $g$ (though not respecting the Hodge filtration), when $D={\rm div}(g)$ is reduced. Theorem~\ref{thm_resolution} gives an analogue for the nearby
	cycles with respect to other eigenvalues, without assuming that $D$ is reduced.
\end{rmk}

\begin{rmk}
The reader may compare Theorem~\ref{thm_resolution} with \cite[Proposition~15.12.5]{MHM_project}, which also gives a resolution of each $V_{-\alpha}B_g^r$.
After choosing local coordinates, the resolution in Theorem~\ref{thm_resolution} can be identified with the one in \emph{loc. cit}. The main difference in the two approaches is that on one side, we keep track of filtrations and, on the other side, we have a global description of the resolution. We note that this latter point is important
for getting a description of the direct image.
\end{rmk}

\begin{rmk}
In connection with the assertion ii) in the theorem, note that if $\alpha\in (0,1]$, then the filtered right $\cD_Y$-module $\big(\gr_{-\alpha}^VB_g^r, F_{\bullet}[-1]\big)$
is precisely $\psi_{g,-\alpha}(\omega_Y)$ (see Remark~\ref{right_nearby_cycles}).
\end{rmk}

\begin{rmk}[Right-left conversion]\label{rmk_conversion} 
Since in the next section we will return to left $\cD$-modules, we translate the assertions in Theorem~\ref{thm_resolution} to that setting.
Note first that for any effective divisor $G$ supported on $E$, the complex of left $\cD_Y$-modules
$\widetilde{\cC}_G$ corresponding to $\overline{\cC}_G$ is given by
$$0\to \cD_Y\otimes_{\cO_Y}\cO_Y(-G)\otimes_{\cO_Y}\omega_Y^{-1}\longrightarrow\cD_Y\otimes_{\cO_Y}\cO_Y(-G)\otimes_{\cO_Y}\Omega_{Y/\A^1}^1({\rm log}\,E)\otimes_{\cO_Y}\omega_Y^{-1}\longrightarrow\ldots$$
$$\ldots\longrightarrow
\cD_Y\otimes_{\cO_Y}\cO_Y(-G)\otimes_{\cO_Y}\Omega_{Y/\A^1}^{n-1}({\rm log}\,E)\otimes_{\cO_Y}\omega_Y^{-1}\to 0.$$
This is a complex of filtered $\cD_Y$-modules, with
$$F_k\widetilde{\cC}_G^{-q}=F_{k-q-1}\cD_Y\otimes_{\cO_Y}\cO_Y(-G)\otimes_{\cO_Y}\Omega_{Y/\A^1}^{n-1-q}({\rm log}\,E)\otimes_{\cO_Y}\omega_Y^{-1}\quad\text{for all}\quad k\in\Z.$$
For every $\alpha\in\Q_{>0}$, the assertions in Theorem~\ref{thm_resolution} become:
\begin{enumerate}
	\item[i)] The complex $\widetilde{\mathcal C}_{D_{\alpha}}$ gives a resolution of $V^{\alpha}B_g$ in the category of filtered left $\cD_Y$-modules.
	\item[2)] The quotient
$\widetilde{\mathcal C}_{D_{\alpha}}/\widetilde{\mathcal C}_{D_{>\alpha}}$, with the induced filtration, gives a filtered resolution of $\gr^{\alpha}_VB_g$
(which is isomorphic to $\psi_{g,\alpha}(\cO_Y)$ for $\alpha\in (0,1]$).
	\item[3)] The endomorphism $N_{D_\alpha}$ induces an operator $N_\alpha$ on the $0$-th cohomology of $\widetilde{\mathcal C}_{D_{\alpha}}$, which gets identified with the endomorphism $\partial_tt$ on $V^{\alpha}B_g$.
\end{enumerate}
\end{rmk}

\begin{proof}[Proof of Theorem~\ref{thm_resolution}]
The fact that for $\alpha\in \Q \cap [0,1)$,  the complexes $\overline{\mathcal C}_{D_{\alpha}}$ and $\overline{\mathcal C}_{D_{\alpha}}{/}\overline{\mathcal C}_{D_{>\alpha}}$
are filtered quasi-isomorphic to filtered $\cD_Y$-modules (placed in cohomological degree $0$) was shown in 
\cite{LMH}*{Theorem 7.1 and 7.2}. However, we give a self-contained direct proof of this assertion below, for all $\alpha>0$, and also describe the degree $0$ cohomology of these complexes. 

A key fact is the following description of the $V$-filtration in the case of a simple normal crossing divisor, for which we refer to \cite[Proposition~3.5]{Saito-MHM}
(see also \cite[Section~2]{Budur_Saito})\footnote{We note that the proof in \cite{Saito-MHM} is done in the analytic setting, assuming that $g$ is a monomial 
$y_1^{a_1}\cdots y_n^{a_n}$.
In the algebraic setting, we can find local algebraic coordinates $y_1,\ldots,y_n$ such that $g=uh$, where $u$ is invertible and $h=y_1^{a_1}\cdots y_n^{a_n}$, with $a_1,\ldots,a_n\in\Z_{\geq 0}$. It is standard to see that
the formula in \cite{Saito-MHM} implies that the $V$-filtration with respect to $h$ in the algebraic setting also satisfies (\ref{eq1_descr_Vfiltration}). On the other hand, the $V$-filtrations with respect to $g$ and $h$
determine each other, as described for example in \cite[Remark~5.10]{CDM}. We thus conclude that the $V$-filtration with respect to $g$ also satisfies (\ref{eq1_descr_Vfiltration}).}. This says that for every $\alpha>0$ and every $p\in\Z$, we have
\begin{equation}\label{eq1_descr_Vfiltration}
F_pV_{-\alpha}B_g^r=\omega_Y(-D_{\alpha}+E)\delta\cdot F_{p+n}\big(\cD_Y[\theta]\big),
\end{equation}
where $F_q\big(\cD_Y[\theta]\big)=\sum_{0\leq i\leq q}F_{q-i}\cD_Y\theta^i$ (recall that $\theta=t\partial_t$).

Let us define an augmented version of the complex $\overline{\mathcal C}_{D_{\alpha}}$, by defining a $\cD_Y$-linear map $\sigma_{\alpha}\colon \overline{\mathcal C}_{D_{\alpha}}^0\to V_{-\alpha}B_g^r$, as follows:
\[
\begin{tikzcd}[column sep=small, row sep=tiny]
\sigma_\alpha \, \colon	&	\overline{\mathcal{C}}_\alpha^0 \arrow[r,"\cong"] & \omega_{Y}(E-D_\alpha) \otimes_{\cO_Y} \cD_Y \arrow[r] &  V_{-\alpha}B_g^r \\
	&	\overline{\xi}\otimes_{\cO_Y}P \arrow[r,mapsto] &  \dlog (g)\wedge \xi \otimes_{\cO_Y}P\arrow[r,mapsto] & \big( \dlog (g) \wedge \xi\big)\delta \cdot P,
\end{tikzcd}
\]
for any local section $\xi$ of $\cO_{Y}(-D_\alpha)\otimes_{\cO_Y}\Omega^{n-1}_{Y}(\log\, E)$ and differential operator $P$,
where we write $\overline{\xi}$ for the class of $\xi$ in $\cO_{Y}(-D_\alpha)\otimes_{\cO_Y}\Omega^{n-1}_{Y/\A^1}(\log\, E)$.
The fact that the first map is an isomorphism follows from the exact sequence (\ref{eq2_def_rel_diff}) (for $\ell=n$). Note that by formula (\ref{eq1_descr_Vfiltration}),
the image of the second map lies indeed in $V_{-\alpha}B_g^r$, and the map is compatible with the filtrations if on $V_{-\alpha}B_g^r$ we consider the
filtration $F_{\bullet}[-1]$.

\begin{lem}
With the above notation, the composition
$$\overline{\mathcal C}_{D_{\alpha}}^{-1}\longrightarrow \overline{\mathcal C}_{D_{\alpha}}^0\overset{\sigma_{\alpha}}\longrightarrow V_{-\alpha}B_g^r$$
is 0.
\end{lem}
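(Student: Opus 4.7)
The plan is to verify the vanishing by a direct local computation. First, using formula (\ref{eq1_descr_Vfiltration}), one confirms that $\sigma_\alpha$ indeed takes values in $V_{-\alpha}B_g^r$. Then in local coordinates $y_1,\ldots,y_n$ on $Y$ for which $E=(y_1\cdots y_r)$ and $h_\alpha$ is a local equation of $D_\alpha$, every local section of $\overline{\mathcal{C}}_{D_\alpha}^{-1}$ can be written as $\overline{h_\alpha\tilde\eta}\otimes P$ with $\tilde\eta\in\Omega_Y^{n-2}({\rm log}\,E)$ and $P\in\cD_Y$. Because $\sigma_\alpha\circ\nabla$ is a morphism of right $\cD_Y$-modules, it is enough to treat the case $P=1$.

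Setting $\mu:=\dlog(g)\wedge h_\alpha\tilde\eta$ and using the closedness of $\dlog(g)$ together with the anticommutation of $1$-forms, a short manipulation rewrites $(\sigma_\alpha\circ\nabla)(\overline{h_\alpha\tilde\eta}\otimes 1)$ as
$$-d\mu\cdot\delta-\sum_{i=1}^n (dy_i\wedge\mu)\,\delta\cdot\partial_{y_i}.$$
The remaining task is to evaluate the sum explicitly. Combining the left-action rules (\ref{eq1_action_on_B_f})--(\ref{eq2_action_on_B_f}) with the right-left conversion (\ref{eq_description_right_action}) yields
$$(\omega\,\delta)\cdot\partial_{y_i}=(\omega\cdot\partial_{y_i})\,\delta+\partial_{y_i}(g)\,\omega\,\partial_t\delta$$
for any local section $\omega$ of $\omega_Y(E-D_\alpha)$. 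Plugging in $\omega=dy_i\wedge\mu$ and summing over $i$, the $\delta$-part collapses to $-d\mu\cdot\delta$ by the identity (\ref{eq_identity_omega}) applied to the logarithmic extension, while the $\partial_t\delta$-part assembles into $(dg\wedge\mu)\,\partial_t\delta$. This last term vanishes because $dg=g\,\dlog(g)$ and $\mu$ already contains the factor $\dlog(g)$, so $dg\wedge\mu=0$. The two contributions cancel exactly, giving the desired vanishing.

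The only real subtleties are the bookkeeping for the right $\cD_Y$-action on $B_g^r$ and justifying the use of (\ref{eq_identity_omega}) on the logarithmic extension $\omega_Y(E-D_\alpha)$; both reduce to $\cO_Y$-linear checks and pose no conceptual obstacle, so the proof is essentially one careful page of formal manipulation.
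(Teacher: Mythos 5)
Your proposal is correct and follows essentially the same route as the paper: reduce to $P=1$ by right $\cD_Y$-linearity, expand the right action of $\partial_{y_i}$ on $\omega\delta$ via (\ref{eq1_action_on_B_f}) and (\ref{eq_description_right_action}), collapse the $\delta$-part using (\ref{eq_identity_omega}), and kill the $\partial_t\delta$-part via $dg\wedge\dlog(g)=0$. The only (immaterial) difference is that you wedge with $\dlog(g)$ after applying $\nabla$ to a lift, whereas the paper wedges first and uses the chain-map property of the inclusion $\overline{\mathcal C}_{D_\alpha}[-1]\hookrightarrow\mathcal C_{D_\alpha}$.
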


\begin{proof}
By viewing $\overline{\mathcal C}_{D_{\alpha}}[-1]\hookrightarrow \mathcal{C}_{D_{\alpha}}$ via (\ref{eq_seq_complexes}), we see that it is enough to show that 
for every $\eta\in\cO_Y(-D_{\alpha})\otimes_{\cO_Y}\Omega_Y^{n-2}({\rm log}\,E)$ and $P\in\cD_Y$, the composition
$$\cO_Y(-D_{\alpha})\otimes_{\cO_Y}\Omega^{n-1}_Y({\rm log}\,E)\otimes_{\cO_Y}\cD_Y\to \cO_Y(-D_{\alpha})\otimes_{\cO_Y}\Omega^n_Y({\rm log}\,E)\otimes_{\cO_Y}\cD_Y\to B^r_g$$
maps $u=\big(\dlog(g)\wedge\eta\big)\otimes P$ to $0$. Since both maps are $\cD_Y$-linear, we may assume that $P=1$. We may also assume that we work in an open subset $U$ with
algebraic coordinates $y_1,\ldots,y_n$, so by definition of the two maps, $u$ is mapped to 
\begin{equation}\label{eq_lem1}
d\big(\dlog(g)\wedge\eta\big)\delta+\sum_{i=1}^n \big((dy_i\wedge\dlog(g)\wedge\eta)\delta\big)\cdot\partial_{y_i}.
\end{equation}
Using the description of the left $\cD_Y$-action on $B_g$ in (\ref{eq1_action_on_B_f})
and the formula for how such an action is turned into a right $\cD_Y$-action in (\ref{eq_description_right_action}), we see that
$$\sum_{i=1}^n \big((dy_i\wedge\dlog(g)\wedge\eta)\delta\big)\cdot\partial_{y_i}=
\sum_{i=1}^n\big(dy_i\wedge\dlog(g)\wedge\eta)\cdot\partial_{y_i}\big)\delta+\sum_{i=1}^n\tfrac{\partial g}{\partial y_i}(dy_i\wedge\dlog(g)\wedge\eta)\partial_t\delta.$$
By (\ref{eq_identity_omega}), we have
$$\sum_{i=1}^n\big(dy_i\wedge\dlog(g)\wedge\eta)\cdot\partial_{y_i}\big)\delta=-d\big(\dlog(g)\wedge\eta\big)\delta,$$
while 
$$\sum_{i=1}^n\tfrac{\partial g}{\partial y_i}(dy_i\wedge\dlog(g)\wedge\eta)\partial_t\delta=\big(dg\wedge\dlog(g)\wedge\eta)\partial_t\delta=0.$$
We thus conclude that the expression in (\ref{eq_lem1}) is $0$, which concludes the proof of the lemma.
\end{proof}

We now return to the proof of the theorem. By the lemma, the complex $\overline{\mathcal C}_{D_{\alpha}}$, together with the map $\sigma_{\alpha}$, gives
an augmented filtered complex, that we denote by $\overline{\mathcal C}_{D_{\alpha}}^{\rm aug}$. 
We will show that, for all $p\in \Z$, the induced complex ${\rm Gr}_p^F\overline{\mathcal C}_{D_{\alpha}}^{\rm aug}$ is acyclic. By a standard inductive argument, this gives the assertion in i).

The fact that each ${\rm Gr}_p^F\overline{\mathcal C}_{D_{\alpha}}^{\rm aug}$ is acyclic can be checked locally in the Zariski topology. Moreover, it can be checked analytically locally: this follows from the fact that for every point $P\in Y$, the morphism
of local rings 
\begin{equation}\label{eq_local_rings}
\cO_{Y,P}\to\cO_{Y^{\rm an},P}
\end{equation} is faithfully flat. Indeed, this implies that the functor ${\mathcal F}\mapsto {\mathcal F}^{\rm an}$ from algebraic coherent sheaves on $Y$ to analytic coherent sheaves 
on $Y^{\rm an}$ is exact and ${\mathcal F}=0$ if and only if ${\mathcal F}^{\rm an}=0$.
We treat the cases $P\in E$ and $P\not\in E$ separately. Suppose first that $P\in E$. 
The advantage in working analytically is that in this case, in some chart $U$ around $P$, we can choose
analytic coordinates $y_1,\ldots,y_n$ centered at $P$ such that 
$g=y_1^{a_1}\cdots y_r^{a_r}$, for some $r\geq 1$ and $a_1,\ldots,a_r>0$ (working algebraically, we can only insure that $g=uy_1^{a_1}\cdots y_r^{a_r}$
for some invertible function $u$). 
On the other hand, the map $(y_1,\ldots,y_n)\colon U\to {\mathbf C}^n$ is biholomorphic onto an open ball in ${\mathbf C}^n$, so it is enough to prove the assertion we need on 
${\mathbf C}^n$.
Once we are in this particular situation, it is convenient to switch back to the algebraic setting: note that using again the faithful flatness of the homomorphisms (\ref{eq_local_rings}), we see that proving the desired assertion in the algebraic setting implies it in the analytic setting as well. 
Hence, from now on, we assume that $Y={\mathbf A}^n$, with coordinates $y_1,\ldots,y_n$, and $g=y_1^{a_1}\cdots y_r^{a_r}$. 
We put $dy=dy_1\wedge\ldots\wedge dy_n$.

Note that $s_\alpha=y_1^{\lceil \alpha a_1 \rceil}\cdots y_r^{\lceil \alpha a_r \rceil}$ gives an isomorphism $\cO_Y(-D_{\alpha})\simeq \cO_Y$ and we have 
a trivialization of $\Omega^{1}_{Y/\A^1}(\log\, E)$ by the classes of 
\[
a_2\cdot \dlog (y_2), \dots, a_r \cdot \dlog (y_r), d y_{r+1},\dots,d y_n.
\] 
A straightforward computation then shows that the complex $\overline{\mathcal C}_{D_\alpha}$ can be identified to the Koszul complex  of the left multiplications by the differential operators 
\begin{equation}\label{eq_ann}
	\tfrac{y_2\partial_{y_2}}{a_2}-\tfrac{y_1\partial_{y_1}}{a_1}+\tfrac{\lceil \alpha a_2 \rceil}{a_2}-\tfrac{\lceil \alpha a_1 \rceil}{a_1},\cdots, \tfrac{y_r\partial_{y_r}}{a_r}-\tfrac{y_1\partial_{y_1}}{a_1}+\tfrac{\lceil \alpha a_r \rceil}{a_r}-\tfrac{\lceil \alpha a_1 \rceil}{a_1}, \partial_{y_{r+1}},\dots, \partial_{y_n}
\end{equation}
on the sheaf $\cD_Y$. The principal symbols of the above differential operators form a regular sequence in $\gr^F_\bullet \cD_Y\simeq\cO_Y[z_1,\ldots,z_n]$ (where we write $z_i$
for the class of $\partial_{y_i}$), and thus, using also (\ref{formula_filtration}), we see that 
$\gr^F_p\overline{\mathcal C}_{D_\alpha}$ is a resolution of 
\begin{equation}\label{eq_descr_H0}
\big(\cO_Y[z_1,\ldots,z_r]/(Q_2,\ldots,Q_r)\big)_{p+n-1},\quad\text{where}\quad Q_i=\tfrac{y_iz_i}{a_i}-\tfrac{y_1z_1}{a_1}\quad\text{for}\quad 2\leq i\leq r.
\end{equation}
Here, we consider $\cO_Y[z_1,\ldots,z_r]$ graded by the total degree in the $z$ variables.

Since $Y=\A^n$ and $g$ is a monomial, we may and will consider $B^r_g$ as a right ${\mathbf Z}^n$-graded module over the Weyl algebra 
$A=\C\langle y_1,\ldots,y_n,t,\partial_{y_1},\ldots,\partial_{y_n},\partial_t\rangle$, where ${\rm deg}(dy\delta)=0$ 
and the grading of $A$ is given by 
${\rm deg}(y_i)=e_i=-{\rm deg}(\partial_{y_i})$
for $1\leq i\leq n$ (here $e_1,\ldots,e_n$ is the standard basis of $\Z^n$), while ${\rm deg}(t)=\sum_{i=1}^ra_ie_i=-{\rm deg}(\partial_t)$. 
Note that formula (\ref{eq1_descr_Vfiltration}) implies that each $F_pV_{-\alpha}B_g^r$ is a $\Z^n$-graded $\C[y_1,\ldots,y_n]$-submodule of $B_g^r$.
For $v=(v_1,\ldots,v_n)\in\Z_{\geq 0}^n$, we put $|v|=\sum_iv_i$, $y^v=y_1^{v_1}\cdots y_n^{v_n}$, and $\partial_y^v=\partial_{y_1}^{v_1}\cdots\partial_{y_n}^{v_n}$.

Using the formulas (\ref{eq1_action_on_B_f}) and (\ref{eq2_action_on_B_f}) describing the left $A$-module structure on $B_g$ and formula (\ref{eq_description_right_action})
allowing us to get the corresponding right structure on $B_g^r$, it is easy to see that for every $v=(v_1,\ldots,v_n)\in\Z_{\geq 0}^n$ and $m\in\Z_{\geq 0}$, we have
\begin{equation}\label{eq1_Bg}
y^vdy\partial_t^m\delta\cdot (\partial_{y_i}y_i+v_i)=y^vdy\partial_t^m\delta\cdot a_i(-\theta+m)\quad\text{for}\quad 1\leq i\leq r\quad\text{and}
\end{equation}
\begin{equation}\label{eq2_Bg}
y^vdy\partial_t^m\delta\cdot\partial_{y_i}=-v_iy^{v-e_i}dy\partial_t^m\delta\quad\text{for}\quad r<i\leq n.
\end{equation}

We deduce from (\ref{eq1_Bg}), (\ref{eq2_Bg}), and (\ref{eq1_descr_Vfiltration}) that if $b\in\Z^n_{\geq 0}$ is given by $b_i=\lceil\alpha a_i\rceil-1$ for $i\leq r$ and $b_i=0$ for $i>r$, then 
\begin{equation}\label{eq2_descr_Vfiltration}
F_pV_{-\alpha}B_g^r=\bigoplus_{v,w} y^bdy\delta\cdot y^v\partial_y^w\C[\theta]_{\leq p+n-|w|},
\end{equation}
where the sum is over those $v,w\in \Z_{\geq 0}^n$ with $v_iw_i=0$ for $i\leq r$ and $w_i=0$ for all $i>r$. Here we write $\C[\theta]_{\leq m}$
for the vector space of polynomials in $\theta$ of degree $\leq m$. 
Indeed, it follows from 
(\ref{eq1_descr_Vfiltration}) that
$$F_pV_{-\alpha}B_g^r=\sum_{v,w,j}y^bdy\delta\cdot {\mathbf C}y^v\partial_y^w\theta^j,$$
where the sum is over all $v,w\in \Z_{\geq 0}^n$ and all $j$, with $0\leq j\leq p+n-|w|$. Using the formula (\ref{eq1_Bg}), we see
that it is enough to take the sum over those $v$ and $w$ with $v_iw_i=0$ for $i\leq r$, and using the formula (\ref{eq2_Bg}),
we see that that we may also assume that $w_i=0$ for all $i>r$ (since $y^bdy\delta\cdot\partial_{y_i}=0$ for $i>r$).
The fact that in (\ref{eq2_descr_Vfiltration}) we have a direct sum
 follows
from the fact that each component lies in a different degree in $\Z^n$. 
Moreover, for every nonzero $u\in B_g^r$ and every $m\in\Z_{\geq 0}$, it is easy to see that $u,u\theta,\ldots,u\theta^m$ are linearly independent over $\C$: it is enough to 
consider the first piece of the Hodge filtration that contains a linear combination $\sum_{j=0}^mc_ju\theta^j$. 
Using again (\ref{eq1_Bg}) to replace the action of $\theta$ by that of $\partial_{y_1}y_1$, we
deduce from (\ref{eq2_descr_Vfiltration}) that
\begin{equation}\label{eq3_descr_Vfiltration}
\gr^F_pV_{-\alpha}B_g^r=\bigoplus_{v,w}\C y^bdy\delta\cdot y^v\partial_y^w,
\end{equation}
where $v,w\in\Z_{\geq 0}^n$, with $w_i=0$ for $i>r$, $v_iw_i=0$ for $2\leq i\leq r$, and $|w|=n+p$.

Let's describe now, via 
the identification of $\cH^0\big(\gr^F_p\overline{\mathcal C}_{D_\alpha}\big)$
with (\ref{eq_descr_H0}), the map 
$$\overline{\sigma_{\alpha}}\colon \cH^0\big(\gr^F_p\overline{\mathcal C}_{D_\alpha}\big)\to \gr^F_{p-1}V_{-\alpha}B_g^r$$ induced by $\sigma_{\alpha}$.
Using the relations $Q_2,\ldots,Q_r$, it is easy to see that
$$\big(\cO_Y[z_1,\ldots,z_r]/(Q_2,\ldots,Q_r)\big)_{p+n-1}=\bigoplus_{v,w}\C y^vz^w,$$
where the sum is over $v,w\in\Z_{\geq 0}^n$, with $v_iw_i=0$ for $2\leq i\leq r$ and $|w|=p+n-1$. 
By definition,
$\overline{\sigma_{\alpha}}(y^vz^w)$ is the class of 
$$s_{\alpha}\dlog(g)\wedge a_2\dlog(y_2)\wedge\ldots\wedge a_r\dlog(y_r)\wedge dy_{r+1}\wedge\ldots\wedge dy_n\delta\cdot y^v\partial_y^w=
a_1\cdots a_r y^bdy\delta\cdot y^v\partial_y^w.$$
Our description of $\gr^F_pV_{-\alpha}B_g^r$ thus implies that $\overline{\sigma_{\alpha}}$ gives an isomorphism 
$$\cH^0\big(\gr^F_p\overline{\mathcal C}_{D_\alpha}\big)\simeq \gr^F_{p-1}V_{-\alpha}B_g^r.$$

In order to complete the proof of i), we still need to show that in the case of an arbitrary $Y$, the assertion also holds on $Y\smallsetminus E$.
We may thus assume that $E=0$, that is, $g$ is invertible. By our running assumption, in this case $g\colon Y\to\A^1\smallsetminus\{0\}$ is a smooth morphism. 
Passing through the analytic setting, as before, we reduce to the case when $Y=\A^{n-1}\times \big(\A^1\smallsetminus\{0\}\big)$ and $g\colon Y\to\A^1$
is the projection onto the last component. Note that if we denote the coordinates on $Y$ by $y_1,\ldots,y_n$, then 
$$\overline{\mathcal C}_{D_{\alpha}}=\Omega_{Y/\A^1}^{\bullet}\otimes_{\cO_Y}\cD_Y=\big(\Omega_{\A^{n-1}}^{\bullet}\otimes_{\cO_{\A^{n-1}}}\cD_{\A^{n-1}}\big)[y_n,y_n^{-1},\partial_{y_n}].$$
It is now clear that this is quasi-isomorphic to $\omega_{\A^{n-1}}[y_n,y_n^{-1},\partial_{y_n}]$, which is (filtered) isomorphic via $\sigma_{\alpha}$ to $\big(B_g^r,F_{\bullet}[-1]\big)$
(note that in this case we have $V_{-\alpha}B_g^r=B_g^r$).
This completes the proof of i). 

Since we have a short exact sequence of complexes
$$0\to {\rm Gr}^F_{\bullet}(\overline{\mathcal C}_{D_{>\alpha}})\to \gr^F_{\bullet}(\overline{\mathcal C}_{D_{\alpha}})\to \gr^F_{\bullet}(\overline{\mathcal C}_{D_{\alpha}}/\overline{\mathcal C}_{D_{>\alpha}})\to 0,$$
by taking the long exact sequence in cohomology and applying i) for $\alpha$ and $\alpha+\epsilon$ (with $0<\epsilon\ll 1$), we obtain the assertion in ii).

We now prove the last assertion. Again, there are two cases to consider: when we are at a point in $E$ and when we are on $Y\smallsetminus E$. We treat the former case and leave the latter one
as an exercise for the reader. For the case of a point in $E$, 
arguing as before, we see that 
it is enough to treat the case when $Y=\A^n$ and $g=y_1^{a_1}\cdots y_r^{a_r}$. Note first that by what we have already proved, $\cH^0(\overline{\mathcal C}_{D_{\alpha}})$ is generated over $\cD_Y$ by the class $\overline{u}$ of the section 
$$u=\big(s_{\alpha} a_2 \dlog(y_2)\wedge\ldots\wedge a_r\dlog(y_r)\wedge dy_{r+1}\wedge\ldots\wedge dy_n\big)\otimes 1$$
of $\cO_Y(-D_{\alpha})\otimes_{\cO_Y}\Omega_{Y}^{n-1}({\rm log}\,E)\otimes_{\cO_Y}\cD_Y$. Indeed, we know that $\overline{\sigma_{\alpha}}$ gives an isomorphism
$\cH^0(\overline{\mathcal C}_{D_{\alpha}})\simeq V_{-\alpha}B^r_g$ and $\sigma_{\alpha}(u)=s_{\alpha}\tfrac{a_1\cdots\a_r}{y_1\cdots y_r}dy\delta$;
by (\ref{eq1_descr_Vfiltration}), this element generates $V_{-\alpha}B^r_g$ over $\cD_Y[\theta]$, and in fact over $\cD_Y$ by (\ref{eq1_Bg}).

Since both maps in iii) are $\cD_Y$-linear, we see that it is enough to show that they take the same value on $\overline{u}$. In order to compute $N_{D_{\alpha}}(\overline{u})$, we apply the differential of ${\mathcal C}_{D_{\alpha}}$ to $u$ to get $s_{\alpha}\eta\cdot\big(\tfrac{\lceil \alpha a_1\rceil}{a_1}+\tfrac{1}{a_1}y_1\partial_{y_1}\big)$, where
$$\eta=a_1\dlog(y_1)\wedge\ldots\wedge a_r\dlog(y_r)\wedge dy_{r+1}\wedge\ldots\wedge dy_n.$$
Since 
$\dlog(g)\wedge u=s_{\alpha}\eta$,
we conclude that $N_{D_{\alpha}}$ maps $\overline{u}$ to $\overline{u}\cdot \big(\tfrac{\lceil \alpha a_1\rceil}{a_1}+\tfrac{1}{a_1}y_1\partial_{y_1}\big)$.

On the other hand, the isomorphism induced by $\sigma_{\alpha}$ maps $\overline{u}$ to $\tfrac{s_{\alpha}a_1\cdots a_r}{y_1\cdots y_r}dy\delta$. By (\ref{eq1_Bg}), we have
$$\tfrac{s_{\alpha}a_1\cdots a_r}{y_1\cdots y_r}dy\delta\cdot \big(\tfrac{\lceil \alpha a_1\rceil}{a_1}+\tfrac{1}{a_1}y_1\partial_{y_1}\big)=
\tfrac{s_{\alpha}a_1\cdots a_r}{y_1\cdots y_r}dy\delta\cdot \big(\tfrac{\lceil \alpha a_1\rceil-1}{a_1}+\tfrac{1}{a_1}\partial_{y_1}y_1\big)=\tfrac{s_{\alpha}a_1\cdots a_r}{y_1\cdots y_r}dy\delta\cdot (-\theta).$$
This completes the proof of iii).
\end{proof}

\begin{rmk}
It is clear from the proof of Theorem~\ref{thm_resolution} that for every $0<\alpha<\alpha'$, the inclusion $V_{-\alpha'}B_g^r\hookrightarrow V_{-\alpha}B_g^r$ corresponds to the natural inclusion
$\overline{\mathcal C}_{D_{\alpha'}}\hookrightarrow \overline{\mathcal C}_{D_{\alpha}}$ induced by the inclusion $\cO_Y(-D_{\alpha'})\hookrightarrow\cO_Y(-D_{\alpha})$. We also note that, arguing 
as in the proof of assertion iii) in Theorem~\ref{thm_resolution}, one can check that for every $\alpha>0$, the map $V_{-\alpha}B_g^r\overset{\cdot t}\longrightarrow V_{-\alpha-1}B_g^r$ (which is
an isomorphism) corresponds to the isomorphism $\overline{\mathcal C}_{D_{\alpha}}\to \overline{\mathcal C}_{D_{\alpha+1}}$ which is induced by the isomorphism
$\cO_Y(-D_{\alpha})\hookrightarrow\cO_Y(-D_{\alpha+1})$ given by multiplication with $g$.
\end{rmk}

\section{The description of the minimal exponent}\label{section_final}

Suppose that $X$ is a smooth, irreducible, $n$-dimensional complex algebraic variety and $Z$ is a nonempty hypersurface in $X$.
We fix a log resolution $\pi\colon Y\to X$ of $(X,Z)$ that is an isomorphism over $X\smallsetminus Z$ and put $E=\pi^*(Z)_{\rm red}$.
We put $D=\pi^*(Z)$ and for every $\alpha\in \Q_{\geq 0}$, we write $D_{\alpha}=\lceil \alpha D\rceil$ and $D_{>\alpha}=\lfloor \alpha D\rfloor+E=D_{\alpha+\epsilon}$, 
where $0<\epsilon\ll 1$.

We begin with the following consequence of the description of a filtered resolution of the nearby cycles in the SNC case. 

\begin{cor}\label{cor1_resolution}
If $Z$ is defined by $f\in\cO_X(X)$, then for every
$\alpha\in \Q\cap (0,1]$ and every $i\in\Z$, with the notation in Theorem~\ref{thm_resolution}, we have an isomorphism
$${\rm GR}_{i-n+1}^F{\rm DR}_X\psi_{f,\alpha}(\cO_X)\simeq {\mathbf R}\pi_*\big((\cO_Y(-D_{\alpha})/\cO_Y(-D_{>\alpha}))
\otimes_{\cO_Y}\Omega_{Y/\A^1}^{n-1-i}({\rm log}\,E)\big)[i].$$
\end{cor}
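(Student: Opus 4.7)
The plan is to combine Proposition~\ref{prop_birat_nearby_cycles} with the filtered resolution produced in Theorem~\ref{thm_resolution}, and then carry out a Koszul-type collapse computation in the graded de Rham complex. First, Proposition~\ref{prop_birat_nearby_cycles} gives
$$\gr^F_{i-n+1}\DR_X\psi_{f,\alpha}(\cO_X)\simeq \mathbf{R}\pi_*\gr^F_{i-n+1}\DR_Y\psi_{g,\alpha}(\cO_Y),$$
so it suffices to produce an isomorphism on $Y$ identifying $\gr^F_{i-n+1}\DR_Y\psi_{g,\alpha}(\cO_Y)$ with $\big((\cO_Y(-D_\alpha)/\cO_Y(-D_{>\alpha}))\otimes_{\cO_Y}\Omega^{n-1-i}_{Y/\A^1}(\log E)\big)[i]$.

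By the left-right conversion of Remark~\ref{rmk_conversion}(2), the quotient $\widetilde{\cC}_{D_\alpha}/\widetilde{\cC}_{D_{>\alpha}}$ is a filtered resolution of $\psi_{g,\alpha}(\cO_Y)$ by induced left $\cD_Y$-modules. Since $\gr^F_\bullet\DR_Y$ is exact on filtered $\cD_Y$-modules, applying it termwise gives a double complex whose total complex computes $\gr^F_{i-n+1}\DR_Y\psi_{g,\alpha}(\cO_Y)$. The column in horizontal position $-q$ is $\gr^F_{i-n+1}\DR_Y(\cD_Y\otimes_{\cO_Y}\cG_q)$ equipped with the filtration shift $F_k=F_{k-q-1}\cD_Y\otimes\cG_q$, where
$$\cG_q:=(\cO_Y(-D_\alpha)/\cO_Y(-D_{>\alpha}))\otimes_{\cO_Y}\Omega^{n-1-q}_{Y/\A^1}(\log E)\otimes_{\cO_Y}\omega_Y^{-1}.$$

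The key input is the following Koszul collapse. For any induced filtered $\cD_Y$-module of the form $(\cD_Y\otimes_{\cO_Y}\cG,\,F_{k-a}\cD_Y\otimes\cG)$, the complex $\gr^F_{p-n}\DR_Y(\cD_Y\otimes_{\cO_Y}\cG)$ is identified, via $\gr^F_\bullet\cD_Y\simeq\Sym_{\cO_Y}\cT_Y$, with a Koszul-type complex
$$\Omega^0_Y\otimes\Sym^{p-n-a}\cT_Y\otimes\cG\to\Omega^1_Y\otimes\Sym^{p-n-a+1}\cT_Y\otimes\cG\to\cdots\to\Omega^n_Y\otimes\Sym^{p-a}\cT_Y\otimes\cG,$$
whose differential is locally $\omega\otimes P\mapsto\sum_i dy_i\wedge\omega\otimes\xi_iP$, with $\xi_i=[\partial_{y_i}]$. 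Because $\xi_1,\dots,\xi_n$ form a regular sequence in $\Sym\cT_Y\simeq\cO_Y[\xi_1,\dots,\xi_n]$, this complex is acyclic whenever $p\neq a$; when $p=a$ only the top term survives and contributes $\omega_Y\otimes_{\cO_Y}\cG$ in cohomological degree $0$.

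Applying this Koszul collapse with $a=q+1$ to each column of our double complex, only the column $q=i$ (where $p-a=(i+1)-(q+1)=0$) contributes non-trivially, and its contribution is $\omega_Y\otimes\cG_i=(\cO_Y(-D_\alpha)/\cO_Y(-D_{>\alpha}))\otimes\Omega^{n-1-i}_{Y/\A^1}(\log E)$, sitting in total cohomological degree $-i$ of the total complex (horizontal position $-i$, vertical position $0$). This is precisely the $[i]$-shift required. The column-wise spectral sequence degenerates at $E_1$ and yields the isomorphism. The main obstacle is the Koszul collapse itself, whose verification is a local computation reducing to the classical exactness of the Koszul complex associated with the regular sequence $\xi_1,\dots,\xi_n$ in $\Sym\cT_Y$.
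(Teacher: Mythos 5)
Your proposal is correct and follows essentially the same route as the paper: reduce to $Y$ via Proposition~\ref{prop_birat_nearby_cycles}, apply ${\rm Gr}^F{\rm DR}_Y$ termwise to the filtered resolution $\widetilde{\cC}_{D_\alpha}/\widetilde{\cC}_{D_{>\alpha}}$, and observe that each induced term collapses to $\omega_Y\otimes\cG_q$ or to zero depending on the filtration shift (the paper phrases this collapse via the Spencer complex resolution of $\cO_Y$, which is exactly your Koszul regular-sequence computation). The only point worth adding is the paper's preliminary reduction to a neighborhood of $Z$ on which $g$ is smooth over $\A^1\smallsetminus\{0\}$, which is a standing hypothesis needed to invoke Theorem~\ref{thm_resolution}.
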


\begin{proof}
Let $g=f\circ \pi$, so $D={\rm div}(g)$.
Since both sides of the formula in the theorem are supported on $Z$, it follows that we may replace $X$ by an open neighborhood of $Z$ to assume that 
$X\overset{f}\longrightarrow \A^1$ is smooth over $\A^1\smallsetminus\{0\}$, and thus $Y\overset{g}\longrightarrow\A^1$ is smooth
over $\A^1\smallsetminus\{0\}$. Therefore, we can apply Theorem~\ref{thm_resolution} for $Y$ and $g$.

With the notation in Remark~\ref{rmk_conversion},
note that for every $q$, we have an isomorphism of filtered left $\cD_Y$-modules
\begin{equation}\label{eq_quot_C}
\widetilde{\mathcal C}^{-q}_{D_{\alpha}}/\widetilde{\mathcal C}^{-q}_{D_{>\alpha}}\simeq \cD_Y(q+1)\otimes_{\cO_Y}\big(\cO_Y(-D_{\alpha})/\cO_Y(-D_{>\alpha})\big)
\otimes_{\cO_Y}\Omega_{Y/\A^1}^{n-1-q}({\rm log}\,E)\otimes_{\cO_Y}\omega_Y^{-1},
\end{equation}
where $\cD_Y(q+1)=\big(\cD_Y,F[-q-1]\big)$. 
Recall that the Spencer complex on $Y$ (which is the complex of left $\cD_Y$-modules corresponding to ${\rm DR}_Y(\cD_Y)$),
gives a filtered resolution of $\cO_Y$, which implies that 
$${\rm Gr}^F_{i-n}{\rm DR}_Y(\cD_Y)\simeq {\rm Gr}^F_{i-n}(\omega_Y)=\left\{
\begin{array}{cl}
\omega_Y, & \text{if}\,\,i=0; \\[2mm]
0, & \text{otherwise}.
\end{array}\right.$$
We thus deduce from (\ref{eq_quot_C}) that
$${\rm Gr}^F_{i-n+1}{\rm DR}_Y\big(\widetilde{\mathcal C}^{-q}_{D_{\alpha}}/\widetilde{\mathcal C}^{-q}_{D_{>\alpha}}\big)\simeq\left\{
\begin{array}{cl}
\Omega^{n-1-q}_{Y/\A^1}({\rm log}\,E)\otimes_{\cO_Y}\cO_Y(-D_{\alpha})/\cO_Y(-D_{>\alpha}), & \text{if}\,\,q=i; \\[2mm]
0, & \text{otherwise}.
\end{array}\right.$$
Using the fact that ${\rm Gr}^F_{i-n+1}{\rm DR}_Y$ maps short exact sequences of filtered $\cD_Y$-modules (with strict morphisms) to short exact sequences of complexes, 
and the fact that by 
Theorem~\ref{thm_resolution} (see also Remark~\ref{rmk_conversion}),
$\widetilde{\mathcal C}_{D_{\alpha}}/\widetilde{\mathcal C}_{D_{>\alpha}}$ gives a filtered resolution of $\psi_{g,\alpha}(\cO_Y)$,
we conclude that
$${\rm Gr}_{i-n+1}^F{\rm DR}_Y\psi_{g,\alpha}(\cO_Y)\simeq \big(\cO_Y(-D_{\alpha})/\cO_Y(-D_{>\alpha})\big)
\otimes_{\cO_Y}\Omega_{Y/\A^1}^{n-1-i}({\rm log}\,E)[i].$$
The assertion in the corollary now follows from Proposition~\ref{prop_birat_nearby_cycles}.
\end{proof}

We also obtain the following vanishing result:

\begin{cor}\label{cor3_resolution}
For every $\beta\geq 0$, the following hold:
\begin{enumerate}
\item[i)] $R^j\pi_*\Omega_Y^i({\rm log}\,E)(-E-\lfloor\beta D\rfloor)=0$ if $i+j>n$.
\item[ii)] $R^j\pi_*\Omega_{Y/\A^1}^i({\rm log}\,E)(-E-\lfloor\beta D\rfloor)=0$ if $i+j>n-1$.
\end{enumerate}
Note that in ii), we assume that $Z$ is defined by $f\in\cO_X(X)$ and $X\smallsetminus Z\overset{f}\longrightarrow \A^1\smallsetminus\{0\}$
is smooth.
\end{cor}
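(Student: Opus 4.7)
The plan is to derive part (ii) directly from Corollary~\ref{cor1_resolution} via elementary sheaf-theoretic arguments, and then deduce part (i) from part (ii) via the short exact sequence~(\ref{eq2_def_rel_diff}). Since part (i) is a local statement on $X$, by generic smoothness we may always reduce to the setup of part (ii), so part (ii) is the main ingredient.

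For part (ii), observe that the graded de Rham complex ${\rm Gr}^F_{i-n+1}{\rm DR}_X\psi_{f,\alpha}(\cO_X)$ lives in non-positive cohomological degrees (the Hodge filtration on $\psi_{f,\alpha}(\cO_X)$ starts at $F_0$), so Corollary~\ref{cor1_resolution} gives
\[
R^m\pi_*\bigl(Q_\alpha \otimes_{\cO_Y} \Omega_{Y/\A^1}^k(\log E)\bigr) = 0 \quad\text{for}\quad m+k > n-1,\ \alpha \in (0,1]\cap\Q,
\]
where $Q_\alpha := \cO_Y(-D_\alpha)/\cO_Y(-D_{>\alpha})$ and $k = n-1-i$. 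Writing $\alpha = k_0 + \alpha_0$ with $\alpha_0 \in (0,1]$ and using $\cO_Y(-k_0 D) = \pi^*\cO_X(-k_0 Z)$, the projection formula extends this vanishing to all $\alpha \in \Q_{>0}$. The long exact sequence attached to
\[
0 \to \cO_Y(-D_{>\alpha}) \to \cO_Y(-D_\alpha) \to Q_\alpha \to 0,
\]
tensored with $\Omega_{Y/\A^1}^k(\log E)$, then yields for $m+k > n-1$ a surjection
\[
R^m\pi_*\Omega_{Y/\A^1}^k(\log E)(-D_{>\alpha}) \twoheadrightarrow R^m\pi_*\Omega_{Y/\A^1}^k(\log E)(-D_\alpha),
\]
which is in fact an isomorphism when $m+k > n$.

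For $m > 0$, each $R^m\pi_*\Omega_{Y/\A^1}^k(\log E)(-D_{\alpha_0})$ is a coherent sheaf on $X$ supported on $Z$ (as $\pi$ is an isomorphism over $X\smallsetminus Z$), hence annihilated by some power of $f$; by the projection formula this forces $R^m\pi_*\Omega_{Y/\A^1}^k(\log E)(-D_\alpha)$ to vanish once $k_0$ is large enough. Propagating this vanishing downward along the discrete chain of surjections indexed by the jump points $\alpha_s = k/a_i$---using the identity $D_{>\alpha_s} = D_{\alpha_{s+1}}$, each surjection reads $R^m\pi_*\Omega(-D_{\alpha_{s+1}}) \twoheadrightarrow R^m\pi_*\Omega(-D_{\alpha_s})$---vanishing at large $s$ propagates to every $s$. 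Thus $R^m\pi_*\Omega_{Y/\A^1}^k(\log E)(-D_{>\beta}) = 0$ for every $\beta \geq 0$ and $m > 0$ with $m+k > n-1$; the case $m = 0$ is trivial since then $k = n$ forces $\Omega_{Y/\A^1}^n(\log E) = 0$. As $D_{>\beta} = E + \lfloor\beta D\rfloor$, this is exactly part (ii).

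For part (i), tensor the sequence~(\ref{eq2_def_rel_diff}) (with $\ell = i$) by the locally free sheaf $\cO_Y(-E-\lfloor\beta D\rfloor)$ and extract from the long exact sequence of $\mathbf{R}\pi_*$ the segment
\[
R^j\pi_*\Omega_{Y/\A^1}^{i-1}(\log E)(-E-\lfloor\beta D\rfloor) \to R^j\pi_*\Omega_Y^i(\log E)(-E-\lfloor\beta D\rfloor) \to R^j\pi_*\Omega_{Y/\A^1}^i(\log E)(-E-\lfloor\beta D\rfloor).
\]
When $i+j > n$, both flanking terms vanish by part (ii) (since $(i-1)+j > n-1$ and $i+j > n-1$), and hence so does the middle one. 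The main obstacle is the boundary case $m+k = n$ in part (ii), where only surjections (and not isomorphisms) are available along the chain, so the downward propagation of vanishing must be tracked carefully using the discrete jump structure together with the projection formula.
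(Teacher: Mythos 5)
Your overall strategy is essentially the paper's, assembled from the same ingredients: the quotient vanishing coming from Corollary~\ref{cor1_resolution} together with the non-positivity of the de Rham complex, the chain of surjections across the jumping numbers of $D_\alpha$, a support-on-$Z$ argument for the base case, and the exact sequence (\ref{eq2_def_rel_diff}) to pass between relative and absolute log forms. Your reorganization is a genuine (and pleasant) difference: you prove ii) first and then deduce i) for \emph{all} $\beta\geq 0$, including $\beta=0$, from ii); the paper instead proves i) first and anchors its induction at $\beta=0$ by citing Steenbrink's vanishing theorem, so your route reproves that input rather than quoting it.

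There is, however, one genuine gap, and it sits at the crux. You assert that because $\cF:=R^m\pi_*\Omega_{Y/\A^1}^k(\log E)(-D_{\alpha_0})$ is annihilated by a power of $f$, ``the projection formula forces $R^m\pi_*\Omega_{Y/\A^1}^k(\log E)(-D_\alpha)$ to vanish once $k_0$ is large enough.'' This does not follow: by the projection formula that sheaf is $\cF\otimes_{\cO_X}\cO_X(-k_0Z)$, a twist of $\cF$ by a line bundle, so it vanishes for some (equivalently, every) $k_0$ if and only if $\cF$ itself vanishes --- the claim is circular, being equivalent to what you are trying to prove. The correct assembly of your own ingredients is: the composite of the surjections from level $\alpha_0+k_0$ down to level $\alpha_0$ is, under the projection-formula identification, the map $\cF\otimes_{\cO_X}\cO_X(-k_0Z)\to\cF$ induced by multiplication by $f^{k_0}$; it is surjective as a composite of surjections, and its image lies in $f^{k_0}\cF=0$ once $k_0$ exceeds the annihilation exponent, whence $\cF=0$ (and then all the twists vanish too, so no further downward propagation is needed). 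The paper runs exactly this argument over a single period, concluding by Nakayama from the surjectivity of $\cF\otimes_{\cO_X}\cO_X(-Z)\to\cF$. With that one step repaired, your proof goes through.
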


\begin{proof}
Let's first prove the assertion in i). Since the assertion is local on $X$ and it holds trivially on $X\smallsetminus Z$, 
we may and will assume that $Z$ is defined by $f\in\cO_X(X)$ and the map $X\smallsetminus Z\overset{f}\longrightarrow \A^1\smallsetminus\{0\}$
is smooth.
When $\beta=0$, the assertion is Steenbrink Vanishing, see \cite[Theorem~2]{Steenbrink85} (in our setting of hypersurfaces, this is easy to prove,
see \cite[Section~2.6]{MOPW}). Furthermore, it follows from the projection formula that the assertion holds when $\beta=1$ and it is enough
to show that it also holds for all $\beta\in (0,1)$. Since $E+\lfloor \beta D\rfloor=D_{>\beta}=D_{\beta+\epsilon}$ for $0<\epsilon\ll 1$, it is enough to show that
\begin{equation}\label{eq_ETS}
R^j\pi_*\Omega_Y^i({\rm log}\,E)(-D_{\alpha})=0 \quad\text{for}\quad i+j>n
\end{equation}
whenever $\alpha\in (0,1]$. Moreover,
we may assume that (\ref{eq_ETS}) holds when replacing $D_{\alpha}$ by $D_{>\alpha}$: indeed, we have seen that we know this when $\alpha=1$, and for $\alpha<1$ we use the fact the $D_{\gamma}$ change value for a discrete set of parameters $\gamma$ (more precisely,
there is a positive integer $\ell$ such
that $D_{\gamma}$ is constant for $\gamma\in\big(i/\ell,(i+1)/\ell\big]$, with $i\in {\mathbf Z}$).
By tensoring the short exact sequence
\begin{equation}\label{eq_SES1}
0\to\cO_Y(-D_{>\alpha})\to\cO_Y(-D_{\alpha})\to\cO_Y(-D_{\alpha})/\cO_Y(-D_{>\alpha})\to 0
\end{equation}
with $\Omega_Y^i({\rm log}\,E)$ and taking the long exact sequence for higher direct images, we see that
it suffices to show that 
\begin{equation}\label{eq_cor3_resolution}
R^j\pi_*\big(\Omega_Y^i({\rm log}\,E)\otimes (\cO_Y(-D_{\alpha})/\cO_Y(-D_{>\alpha}))\big)=0\quad\text{for}\quad i+j>n.
\end{equation}
Note that Corollary~\ref{cor1_resolution} and the fact that the de Rham complex of a Hodge module is concentrated in nonpositive degrees imply that
\begin{equation}\label{eq_van_cor3_resolution}
R^j\pi_*\big(\Omega_{Y/\A^1}^i({\rm log}\,E)\otimes (\cO_Y(-D_{\alpha})/\cO_Y(-D_{>\alpha}))\big)=0\quad\text{if}\quad i+j>n-1.
\end{equation}
By tensoring with $\cO_Y(-D_{\alpha})/\cO_Y(-D_{>\alpha})$ the short exact sequence (\ref{eq2_def_rel_diff}), with $\ell=i$, and taking the long
exact sequence for higher direct images, we obtain the vanishing in (\ref{eq_cor3_resolution}).

The assertion in ii) follows in the same way via (\ref{eq_van_cor3_resolution}) if we show that
$$R^j\pi_*\Omega_{Y/\A^1}^i({\rm log}\,E)(-E)=0\quad\text{if}\quad i+j>n-1.$$
This is clear if $j=0$ (since in this case $i>n-1$ and thus $\Omega_{Y/\A^1}^i({\rm log}\,E)=0$), hence we assume $j>0$.
If we denote the sheaf on the left-hand side by $\cF$, note first that since $j>0$, we have ${\rm Supp}(\cF)\subseteq Z$. 
On the other hand, the long exact sequence for higher direct images 
associated to the exact sequence obtained by tensoring (\ref{eq_SES1}) with $\Omega_{Y/\A^1}^i({\rm log}\,E)$
and the vanishing (\ref{eq_van_cor3_resolution}) give
$$R^j\pi_*\big(\Omega_{Y/\A^1}^i({\rm log}\,E)\otimes \cO_{Y/\A^1}(-D_{>\alpha})\big)\to R^j\pi_*\big(\Omega_Y^i({\rm log}\,E)\otimes \cO_Y(-D_{\alpha})\big)$$
$$\to R^{j+1}\pi_*\big(\Omega_{Y/\A^1}^i({\rm log}\,E)\otimes (\cO_Y(-D_{\alpha})/\cO_Y(-D_{>\alpha}))\big)=0$$
for all $\alpha\in (0,1]$. By successively applying this, we see that the canonical morphism
$$\cF\otimes_{\cO_X}\cO_X(-Z)=R^j\pi_*\big(\Omega_{Y/\A^1}^i({\rm log}\,E)(-E-D)\big)=R^j\pi_*\big(\Omega_Y^i({\rm log}\,E)\otimes \cO_Y(-D_{>1})\big)$$
$$\to
R^j\pi_*\big(\Omega_Y^i({\rm log}\,E)\otimes \cO(-D_{>0})\big)=R^j\pi_*\Omega_{Y/\A^1}^i({\rm log}\,E)(-E)=\cF$$
is surjective. Therefore, we have $\cF\otimes_{\cO_X}\cO_Z=0$, and Nakayama's Lemma implies that ${\rm Supp}(\cF)\cap Z=\emptyset$.
Since ${\rm Supp}(\cF)\subseteq Z$, we conclude that $\cF=0$.
\end{proof}

The following corollary will lead to the proof of our main result. For every $\alpha\in (0,1)\cap\Q$, we put
$${\mathcal E}_Y^{q, \alpha}=\cO_Y(-D_{\alpha})\otimes_{\cO_Y}\Omega_Y^q({\rm log}\,E)\quad\text{and}\quad{\mathcal E}_Y^{q, >\alpha}=\cO_Y(-D_{>\alpha})\otimes_{\cO_Y}\Omega_Y^q({\rm log}\,E)$$
and consider for every $i$ the canonical morphism
$$\gamma_i\colon {\mathbf R}\pi_*({\mathcal E}^{n-i,>\alpha}_Y)\to {\mathbf R}\pi_*({\mathcal E}^{n-i,\alpha}_Y)$$
induced by the inclusion $\cO_Y(-D_{>\alpha})\hookrightarrow \cO_Y(-D_{\alpha})$.

\begin{cor}\label{cor2_resolution}
With the above notation, if $\alpha\in (0,1)\cap\Q$ and $p\in\Z_{\geq 0}$ is such that $\widetilde{\alpha}(Z)\geq p$, 
then the following hold:
 \begin{enumerate}
 \item[i)] $\gamma_i$ is an isomorphism for $i\leq p-1$.
 \item[ii)] $\cH^q(\gamma_p)$ is an isomorphism for $q\neq p$ and $\cH^p(\gamma_p)$ is injective.
 \item[iii)] If $\widetilde{\alpha}(Z)\geq p+\alpha$, then we have $\widetilde{\alpha}(Z)> p+\alpha$ if and only if $\gamma_p$ is an isomorphism. 
 \end{enumerate}
\end{cor}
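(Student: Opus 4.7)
The plan is to identify the cone of each $\gamma_i$, via Corollary~\ref{cor1_resolution}, with a shift of the graded de Rham complex of the nearby cycles on $X$, and then read off the desired vanishings and concentrations from Corollaries~\ref{cor2_char}, \ref{cor3_resolution}, and \ref{cor1_char}. Since the assertions are local on $X$, I may assume that $Z$ is cut out by $f\in\cO_X(X)$ and, after shrinking, that $X\smallsetminus Z\xrightarrow{f}\A^1\smallsetminus\{0\}$ is smooth, so that $\widetilde{\alpha}(Z)=\widetilde{\alpha}(f)$ and Corollary~\ref{cor1_resolution} applies. Set $Q:=\cO_Y(-D_\alpha)/\cO_Y(-D_{>\alpha})$, a torsion sheaf supported on $E$. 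The short exact sequence $0\to\cE_Y^{n-i,>\alpha}\to\cE_Y^{n-i,\alpha}\to Q\otimes_{\cO_Y}\Omega_Y^{n-i}(\log E)\to 0$ exhibits the cone of $\gamma_i$ as $C_i:=\mathbf{R}\pi_*\big(Q\otimes_{\cO_Y}\Omega_Y^{n-i}(\log E)\big)$.

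The key step is that tensoring the relative differentials sequence (\ref{eq2_def_rel_diff}) with $Q$ stays exact (the $\Omega_{Y/\A^1}^\bullet(\log E)$ are locally free), and pushing forward yields a distinguished triangle
$$\cG^{n-i-1}\longrightarrow C_i\longrightarrow \cG^{n-i}\xrightarrow{+1},\quad\text{where}\quad \cG^q:=\mathbf{R}\pi_*\big(Q\otimes_{\cO_Y}\Omega_{Y/\A^1}^q(\log E)\big).$$
Reindexing Corollary~\ref{cor1_resolution} (with $q=n-1-i_\ast$ in the statement there) identifies $\cG^q\simeq{\rm Gr}^F_{-q}{\rm DR}_X\psi_{f,\alpha}(\cO_X)[q-n+1]$. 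Under the standing hypothesis $\widetilde{\alpha}(f)\geq p$, Corollary~\ref{cor2_char}~i) then gives $\cG^q=0$ for every $q\geq n-p$.

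For i), if $i\leq p-1$ both $n-i-1$ and $n-i$ are at least $n-p$, so $\cG^{n-i-1}=\cG^{n-i}=0$; the triangle forces $C_i=0$, hence $\gamma_i$ is an isomorphism. For ii), with $i=p$, $\cG^{n-p}=0$ still holds, while Corollary~\ref{cor2_char}~ii)--iii) combined with the shift shows that $\cG^{n-p-1}$ is concentrated in cohomological degree $p$ with $\cH^p(\cG^{n-p-1})\simeq \omega_X\otimes_{\cO_X}{\rm Gr}^F_{p+1}\psi_{f,\alpha}(\cO_X)$, so $C_p\simeq\cG^{n-p-1}$ is concentrated in degree $p$. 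The long exact sequence attached to the triangle immediately yields that $\cH^q(\gamma_p)$ is an isomorphism for $q<p$ and that $\cH^p(\gamma_p)$ is injective. For $q\geq p+1$, Corollary~\ref{cor3_resolution}~i) (applied with $i=n-p$ and $\beta=\alpha$) gives $R^q\pi_*\cE_Y^{n-p,>\alpha}=0$, and since $\cH^j(C_p)=0$ for $j>p$, the long exact sequence also forces $R^q\pi_*\cE_Y^{n-p,\alpha}=0$; thus $\cH^q(\gamma_p)$ is the trivial map $0\to 0$.

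For iii), the description of $C_p$ shows that $\gamma_p$ is an isomorphism if and only if $\cH^p(C_p)=0$, equivalently ${\rm Gr}^F_{p+1}\psi_{f,\alpha}(\cO_X)=0$, which under the added hypothesis $\widetilde{\alpha}(f)\geq p+\alpha$ is precisely Corollary~\ref{cor1_char}~ii)'s criterion for $\widetilde{\alpha}(f)>p+\alpha$. I expect the main obstacle to be not conceptual but careful bookkeeping: one has to track the shift $[q-n+1]$ and the index translation between the $Y$-side complexes $\cG^q$ and the $X$-side graded de Rham of the nearby cycles, so as to line up the vanishing range $q\geq n-p$ with the correct Hodge-filtration index in Corollary~\ref{cor2_char}. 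Once this dictionary is fixed, all three parts fall out mechanically from the long exact sequence and the already-established vanishings.
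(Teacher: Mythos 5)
Your proof is correct and follows essentially the same route as the paper: both rest on Corollary~\ref{cor1_resolution} to identify $\mathbf{R}\pi_*\big(Q\otimes\Omega^q_{Y/\A^1}(\log E)\big)$ with a shift of ${\rm Gr}^F_\bullet{\rm DR}_X\psi_{f,\alpha}(\cO_X)$, then feed in Corollaries~\ref{cor2_char}, \ref{cor3_resolution}, and \ref{cor1_char}. The only (cosmetic) difference is that the paper first establishes the statements for the relative maps $\beta_i$ and transfers them to $\gamma_i$ via the 5-lemma on a morphism of exact triangles, whereas you work directly with the cones $C_i$ and the triangle $\cG^{n-i-1}\to C_i\to\cG^{n-i}$ — the rotated form of the same argument.
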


\begin{proof}
The assertions are local on $X$ and they hold trivially on $X\smallsetminus Z$, hence we may and will assume that $Z$ is defined
by $f\in\cO_X(X)$ and the morphism $X\smallsetminus Z\overset{f}\longrightarrow\A^1\smallsetminus \{0\}$ is smooth.
We first prove the corresponding result when the sheaves $\Omega_Y^j({\rm log}\,E)$ are replaced by $\Omega^j_{Y/\A^1}({\rm log}\,E)$.
More precisely, let
$${\mathcal E}_{Y/\A^1}^{q, \alpha}=\cO_Y(-D_{\alpha})\otimes_{\cO_Y}\Omega_{Y/\A^1}^q({\rm log}\,E), \,\,
{\mathcal E}_{Y/\A^1}^{q, >\alpha}=\cO_Y(-D_{>\alpha})\otimes_{\cO_Y}\Omega_{Y/\A^1}^q({\rm log}\,E),$$
and consider for every $i$ the canonical morphism 
$$\beta_i\colon {\mathbf R}\pi_*{\mathcal E}_{Y/\A^1}^{n-1-i,>\alpha}\to {\mathbf R}\pi_*{\mathcal E}_{Y/\A^1}^{n-1-i,\alpha}$$
induced by the inclusion $\cO_Y(-D_{>\alpha})\hookrightarrow \cO_Y(-D_{\alpha})$. 
With this notation, by combining Corollaries~\ref{cor2_char} and \ref{cor1_resolution}, and using the long exact sequence in cohomology associated to the distinguished triangle
$${\mathbf R}\pi_*\big(\cO_Y(-D_{>\alpha})
\otimes_{\cO_Y}\Omega_{Y/\A^1}^{n-1-i}({\rm log}\,E)\big)\to {\mathbf R}\pi_*\big(\cO_Y(-D_{\alpha})
\otimes_{\cO_Y}\Omega_{Y/\A^1}^{n-1-i}({\rm log}\,E)\big)$$ 
$$\to{\mathbf R}\pi_*\big((\cO_Y(-D_{\alpha})/\cO_Y(-D_{>\alpha}))\otimes_{\cO_Y}\Omega_{Y/\A^1}^{n-i-1}({\rm log}\,E)\big)
\overset{+1}\longrightarrow,$$
we deduce that the following hold:
\begin{enumerate}
\item[i')] $\beta_i$ is an isomorphism for all $i\leq p-1$.
\item[ii')] $\cH^q(\beta_p)$ is an isomorphism for all $q\neq p$ and $\cH^p(\beta_p)$ is injective.
\item[iii')] If $\widetilde{\alpha}(Z)\geq p+\alpha$, then we have $\widetilde{\alpha}(Z)>p+\alpha$ if and only if $\beta_p$ is an isomorphism. 
\end{enumerate}
Note that $\cH^q(\beta_p)$ is an isomorphism for $q>p$
since both the domain and the target vanish by Corollary~\ref{cor3_resolution}ii).

Note now that the exact sequence (\ref{eq2_def_rel_diff}) gives 
 a commutative diagram 
$$\xymatrix{
 {\mathbf R}\pi_*({\mathcal E}^{n-1-i,>\alpha}_{Y/\A^1})\ar[d]_{\beta_i}\ar[r]& {\mathbf R}\pi_*({\mathcal E}^{n-i,>\alpha}_Y)\ar[d]^{\gamma_i}\ar[r] &{\mathbf R}\pi_*({\mathcal E}^{n-i,>\alpha}_{Y/\A^1})
 \ar[d]_{\beta_{i-1}}\ar[r] &{\mathbf R}\pi_*({\mathcal E}^{n-1-i,>\alpha}_{Y/\A^1})[1]\ar[d]^{\beta_i[1]}\\
 {\mathbf R}\pi_*({\mathcal E}^{n-i-1,\alpha}_{Y/\A^1})\ar[r]& {\mathbf R}\pi_*({\mathcal E}^{n-i,\alpha}_Y)\ar[r] &{\mathbf R}\pi_*({\mathcal E}^{n-i,\alpha}_{Y/\A^1})\ar[r]&
 {\mathbf R}\pi_*({\mathcal E}^{n-i-1,\alpha}_{Y/\A^1})[1],}$$ 
 in which the rows are exact triangles. 
 The fact that $\gamma_i$ is an isomorphism for $i\leq p-1$ follows directly from i') above and the 5-Lemma. Let's take now $i=p$. The fact that $\cH^q(\gamma_p)$ is an isomorphism for $q>p$
 follows from the fact that both the source and the target vanish by Corollary~\ref{cor3_resolution}i). The fact that $\cH^q(\gamma_p)$ is an isomorphism for $q<p$ and injective for $q=p$
 follows from ii') above and the 5-Lemma. Finally, since $\cH^{p-1}(\beta_{p-1})$, $\cH^p(\beta_{p-1})$ and $\cH^{p+1}(\beta_p)$ are isomorphisms, 
 the 5-Lemma implies that $\cH^p(\gamma_p)$ is surjective (hence an isomorphism) if and only if $\cH^p(\beta_p)$ is surjective (hence an isomorphism) and by iii') above,
 this is the case if and only if $\widetilde{\alpha}(Z)>p+\alpha$.
\end{proof}

We can now give the proof of our main result.

\begin{proof}[Proof of Theorem~\ref{thm_main}]
Let $0=\alpha_0<\alpha_1<\ldots<\alpha_r=\alpha$ be such that $D_{\alpha}=D_{\alpha_i}$ for $1\leq i\leq r$ and $\alpha\in (\alpha_{i-1},\alpha_i]$, and thus $D_{>\alpha}=D_{\alpha_{i}}$
for $\alpha\in [\alpha_{i-1},\alpha_i)$. Applying Corollary~\ref{cor2_resolution}ii) for each of $\alpha_1,\ldots,\alpha_r$ gives the statement in i). 
Since each morphism 
$$R^q\pi_*\Omega_Y^{n-p}({\rm log}\,E)(-D_{>\alpha_i})\to R^q\pi_*\Omega_Y^{n-p}{(\rm log}\,E)(-D_{\alpha_i}),$$
with $1\leq i\leq r$, is injective, it follows that they are all isomorphisms if and only if the composition is an isomorphism. 
Therefore Corollary~\ref{cor2_resolution}iii) gives the statement in ii). This completes the proof of the theorem.
\end{proof}

\begin{eg}\label{eg_multiplier}
If $p=0$, then we automatically have $\widetilde{\alpha}(Z)>0$. In this case, the condition $\widetilde{\alpha}(Z)>\alpha$, for $\alpha\in (0,1)$, is equivalent to ${\rm lct}(Z)>\alpha$. 
Since the morphism (\ref{eq_thm_main}) gets identified to the inclusion
$$\cJ(\alpha Z)\otimes_{\cO_X}\omega_X\hookrightarrow\omega_X,$$
where $\cJ(\alpha  Z)$ is the multiplier ideal of $\alpha Z$,
we recover the well-known condition that ${\rm lct}(Z)>\alpha$ if and only if $\cJ(\alpha Z)=\cO_X$ (see \cite[Chapter~9.3.B]{Lazarsfeld}).
\end{eg}

We deduce the dual version of Theorem~\ref{thm_main}:

\begin{proof}[Proof of Corollary~\ref{dual_formulation}]
Since 
$$\Omega_Y^{n-p}({\rm log}\,E)^{\vee}\simeq \Omega_Y^p({\rm log}\,E)\otimes_{\cO_Y}\omega_Y(E)^{-1},$$
Grothendieck duality for the morphism $\pi$ gives canonical isomorphisms
$${\mathbf R}{\mathcal Hom}_{\cO_X}\big({\mathbf R}\pi_*\Omega_Y^{n-p}({\rm log}\,E)(-E),\omega_X\big)\simeq {\mathbf R}\pi_*{\mathbf R}{\mathcal Hom}_{\cO_Y}\big(\Omega_Y^{n-p}({\rm log}\,E)(-E),\omega_Y\big)$$
$$\simeq {\mathbf R}\pi_*\Omega_Y^p({\rm log}\,E)\quad\text{and}$$
$${\mathbf R}{\mathcal Hom}_{\cO_X}\big({\mathbf R}\pi_*\Omega_Y^{n-p}({\rm log}\,E)(-E-\lfloor\alpha D\rfloor),\omega_X\big)\simeq {\mathbf R}\pi_*{\mathbf R}{\mathcal Hom}_{\cO_Y}\big(\Omega_Y^{n-p}({\rm log}\,E)(-E
-\lfloor\alpha D\rfloor),\omega_Y\big)$$
$$\simeq {\mathbf R}\pi_*\Omega_Y^p({\rm log}\,E)\big(\lfloor\alpha D\rfloor\big).$$
Since a morphism $u$ in $D^b_{\rm coh}(X)$ is an isomorphism if and only if ${\mathbf R}{\mathcal Hom}_{\cO_X}(u,\omega_X)$ is an isomorphism, it follows from Theorem~\ref{thm_main} that
if $\widetilde{\alpha}(Z)>p$, then we have $\widetilde{\alpha}(Z)>p+\alpha$ if and only if the canonical morphism 
$${\mathbf R}\pi_*\Omega_Y^p({\rm log}\,E)\to {\mathbf R}\pi_*\Omega_Y^p({\rm log}\,E)\big(\lfloor\alpha D\rfloor\big)$$
is an isomorphism. Of course, this is the case if and only if we have isomorphisms when taking the cohomology on both sides.
However, since we assume $\widetilde{\alpha}(Z)>p$, we have
$$R^q\pi_*\Omega_Y^p({\rm log}\,E)=0\quad\text{for all}\quad q\geq 1$$
by Theorem~\ref{thm_char}, while the morphism 
$$\pi_*\Omega_Y^p({\rm log}\,E)\to\pi_*\Omega_Y^p({\rm log}\,E)\big(\lfloor\alpha D\rfloor\big)$$
is an isomorphism by (\ref{eq1_reflexive}).
We thus obtain the assertion in the corollary.
\end{proof}

\begin{rmk}\label{rmk_DuBois}
Let's apply Corollary~\ref{dual_formulation} with $\alpha=1-\epsilon$, where $0<\epsilon\ll 1$, so $\lfloor\alpha D\rfloor=D-E$. The assertion in the corollary then says, via the projection formula, that if $\widetilde{\alpha}(Z)>p$,
then we have $\widetilde{\alpha}(Z)\geq p+1$ if and only if
\begin{equation}\label{eq_condition}
R^q\pi_*\Omega_Y^p({\rm log}\,E)(-E)=0\quad\text{for all}\quad q\geq 1.
\end{equation}
Recall now that by \cite[Proposition~3.3]{Steenbrink85}, we have a canonical exact triangle
$${\mathbf R}\pi_*\Omega_Y^p({\rm log}\,E)(-E)\longrightarrow\Omega_X^p\to\underline{\Omega}_Z^p\overset{+1}\longrightarrow,$$
where $\underline{\Omega}_Z^p$ is the $p$th Du Bois complex of $Z$ and the second map in the triangle is the composition of
canonical maps
$$\Omega_X^p\longrightarrow \Omega_Z^p\overset{\gamma_p}\longrightarrow\underline{\Omega}_Z^p.$$
We now show that condition (\ref{eq_condition}) above is satisfied if and only if $\gamma_p$ is an isomorphism.

The exact triangle implies that condition (\ref{eq_condition})  is equivalent to the fact that $\cH^i(\underline{\Omega}_Z^p)=0$ for all $i\geq 1$ and the canonical map
$\Omega_X^p\to\cH^0(\underline{\Omega}_Z^p)$ is surjective (equivalently, $\cH^0(\gamma_p)$ is surjective). If $p=0$, then $\cH^0(\gamma_0)$ being surjective
clearly implies that it is bijective.

Suppose now that $p\geq 1$. Since we assume that $\widetilde{\alpha}(Z)>p$, we first see that $Z$ has rational singularities, and thus $\cH^0(\underline{\Omega}_Z^p)=\Omega_Z^{[p]}$,
the sheaf of reflexive $p$-differentials, by \cite[Corollary~1.11]{KS}. 
On the other hand, since $\widetilde{\alpha}(Z)>p$, we have $r:={\rm codim}_Z(Z_{\rm sing})\geq 2p\geq p+1$. By
\cite[Theorem~1.11]{Graf}, this implies that the kernel of the canonical map $\Omega_Z^p\to\Omega_Z^{[p]}$ is $0$.
We thus conclude that (\ref{eq_condition}) holds if and only if $\gamma_p$ is an isomorphism.
We have thus shown that if $\widetilde{\alpha}(Z)>p$, then we have $\widetilde{\alpha}(Z)\geq p+1$ if and only if $\gamma_p$ is an isomorphism. 

In fact, it is known that for every reduced hypersurface $Z$, we have $\widetilde{\alpha}(Z)\geq p+1$ if and only if $\gamma_i$ is an isomorphism for all $i\leq p$: see 
\cite{MOPW} and \cite{Saito_et_al} for the two implications. It is interesting to note that while we have obtained another proof for the ``only if" part, the converse does not follow
from our argument since the assumption $\widetilde{\alpha}(Z)>p$ does not allow us to do induction on $p$. 
\end{rmk}

\section{An application to families with simultaneous log resolutions}

We conclude the paper with an application of our main results to the constancy of the minimal exponent in a proper family of hypersurfaces that admit a simultaneous log resolution. This answers a question of Radu Laza.

The setup in this section is the following. Suppose that $\mu\colon X\to T$ is a smooth proper morphism of smooth, irreducible complex algebraic varieties. Let $Z\hookrightarrow X$ be a relative effective Cartier divisor over $T$, that is,
$Z$ is an effective Cartier divisor on $X$ such that for every $t\in T$, its fiber $Z_t$ is an effective Cartier divisor in the smooth variety $X_t=\mu^{-1}(t)$. We assume that we have a proper morphism 
$\pi\colon Y\to X$ that is an isomorphism over $X\smallsetminus Z$ and such that $\mu\circ\pi$ is smooth and a simple normal crossing divisor $E=\sum_{i=1}^NE_i$ on $Y$ such that the following hold:
\begin{enumerate}
\item[i)] $E$ also has relative simple normal crossings\footnote{This means that for every subset $J\subseteq\{1,\ldots,N\}$, if $E_J=\cap_{i\in J}E_i$, then $E_J$ is smooth over $T$.}.
\item[ii)] We have $\pi^*(Z)=\sum_{i=1}^Na_iE_i$ for some positive integers $a_1,\ldots,a_N$.
\end{enumerate}

Under the above hypothesis, we get the constancy of the minimal exponent:

\begin{thm}\label{thm_last}
With the above notation, we have $\widetilde{\alpha}(X_t,Z_t)=\widetilde{\alpha}(X,Z)$ for all $t\in T$. 
\end{thm}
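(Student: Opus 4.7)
The plan is to apply the characterizations of the minimal exponent given by Theorem~\ref{thm_char} (integer thresholds) and Corollary~\ref{dual_formulation} (non-integer thresholds) both to the fiber $(X_t,Z_t)$ and to the total pair $(X,Z)$, and then to compare them via base change along $X_t\hookrightarrow X$.

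First, I would verify that $\pi_t\colon Y_t\to X_t$ is a log resolution of $(X_t,Z_t)$ that is an isomorphism over $X_t\smallsetminus Z_t$, with reduced SNC divisor $E_t:=E\cap Y_t$ whose components are the $E_i\cap Y_t$, and with $\pi_t^*(Z_t)=\sum_i a_i(E_i\cap Y_t)$; this is immediate from the hypotheses that $\mu\circ\pi$ is smooth, $E$ has relative SNC, and $Z$ is a relative Cartier divisor. Consequently, $\lfloor\alpha D\rfloor\vert_{Y_t}=\lfloor\alpha D_t\rfloor$ for every $\alpha\in\Q_{\geq 0}$, where $D=\pi^*Z$ and $D_t=\pi_t^*Z_t$. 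Moreover, the relative SNC structure ensures that the codimension of the singular locus is independent of $t$, i.e., $\codim_{Z_t}((Z_t)_\sing)=\codim_Z(Z_\sing)$: near any point of $X$, the simultaneous log resolution provides an analytic-local product structure in which $Z$ is pulled back from its restriction to $X_t$, so the singular strata of the $Z_t$'s assemble into locally trivial families over $T$.

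Next, it suffices to show that for every $\gamma\in\Q_{>0}$ and every $t\in T$, we have $\widetilde{\alpha}(X,Z)>\gamma$ if and only if $\widetilde{\alpha}(X_t,Z_t)>\gamma$. Thanks to Theorem~\ref{thm_char} and Corollary~\ref{dual_formulation}, combined with the codimension identification above, this reduces to the biconditional
\begin{equation*}
R^q\pi_*\Omega_Y^i(\log E)\bigl(\lfloor\alpha D\rfloor\bigr)=0\quad\Longleftrightarrow\quad R^q(\pi_t)_*\Omega_{Y_t}^i(\log E_t)\bigl(\lfloor\alpha D_t\rfloor\bigr)=0\text{ for all }t\in T,
\end{equation*}
for the relevant $i,q$ and $\alpha\in[0,1)\cap\Q$. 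Using the short exact sequence $0\to\nu^*\Omega_T^1\to\Omega_Y^1(\log E)\to\Omega_{Y/T}^1(\log E)\to 0$ (valid since $\nu=\mu\circ\pi$ is smooth), wedging to higher degree, and the projection formula, the absolute statement reduces to the analogous one for the \emph{relative} log forms $\cF:=\Omega_{Y/T}^i(\log E)\bigl(\lfloor\alpha D\rfloor\bigr)$, which are locally free on $Y$ and restrict to $\Omega_{Y_t}^i(\log E_t)\bigl(\lfloor\alpha D_t\rfloor\bigr)$ on each fiber.

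The main obstacle is the base-change compatibility $(R^q\pi_*\cF)\vert_{X_t}\simeq R^q(\pi_t)_*(\cF\vert_{Y_t})$ uniformly in $t$, for which it suffices to establish $\mu$-flatness of the higher direct images $R^q\pi_*\cF$. I would obtain this flatness by lifting the situation to Saito's theory of mixed Hodge modules on $X\to T$: the sheaves $\cF$ appear, via the filtered resolution of $\psi_{g,\alpha}(\cO_Y)$ developed in Section~\ref{section_resolution}, as graded pieces of de Rham complexes of Hodge modules, and Saito's strictness theorem applied to the proper morphism $\pi$ ensures that the push-forward is a strict complex of filtered $\cD$-modules, from which the desired flatness and base change follow. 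Alternatively, one may prove the flatness directly via the Leray spectral sequence $E_2^{p,q}=R^p\mu_*R^q\pi_*\cF\Rightarrow R^{p+q}\nu_*\cF$, combined with the $E_1$-degeneration of the relative log Hodge-to-de Rham spectral sequence for the smooth proper morphism $\nu$. Once this base-change compatibility is secured, the equivalence of vanishing conditions combined with Theorem~\ref{thm_char} and Corollary~\ref{dual_formulation} delivers $\widetilde{\alpha}(X,Z)=\widetilde{\alpha}(X_t,Z_t)$ for every $t\in T$.
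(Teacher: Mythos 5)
Your overall strategy---restricting the log-resolution criteria of Theorem~\ref{thm_char} and Corollary~\ref{dual_formulation} to the fibers via relative log de Rham sheaves and a base-change computation---is indeed the engine of the paper's argument (it is how Claim~\ref{claim_final} there is proved). But the proposal has genuine gaps. The criteria you invoke are not unconditional, and the clean biconditional ``$\widetilde{\alpha}(X,Z)>\gamma$ iff $\widetilde{\alpha}(X_t,Z_t)>\gamma$ for all $t$'' hides this: Corollary~\ref{dual_formulation} characterizes $\widetilde{\alpha}>p+\alpha$ only under the hypothesis $\widetilde{\alpha}>p$ \emph{on each side separately}, and at integer thresholds Theorem~\ref{thm_char} requires, besides the vanishing, the bound ${\rm codim}_{Z_t}\big((Z_t)_{\rm sing}\big)\geq 2p$ on every fiber. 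Your justification for the constancy of this codimension---that the simultaneous log resolution yields an analytic-local product structure---is unfounded: equiresolvability is strictly weaker than local triviality of the family, and $(Z_t)_{\rm sing}$ can a priori strictly contain $Z_{\rm sing}\cap X_t$. The paper sidesteps this entirely: it first secures $\widetilde{\alpha}(X_{t_0},Z_{t_0})\geq\gamma$ via semicontinuity and generic constancy of the minimal exponent (\cite[Theorems E, Lemma 5.3]{MPV}, \cite[Theorem 13.1]{MPQ}) and gets $\widetilde{\alpha}(X,Z)>\gamma'$ for $\gamma'<\gamma$ by inversion of adjunction \cite{C24}; the codimension bound on the special fiber is then \emph{deduced} from $\widetilde{\alpha}(X_{t_0},Z_{t_0})\geq p$ via \cite[Lemma~2.2]{MOPW} and the weakened condition of Remark~\ref{codim_sing_locus}, with the case $\gamma=1$ handled by a separate rational-singularities argument. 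These external inputs supply exactly the direction of the comparison at the critical value $\gamma=\widetilde{\alpha}(X_{t_0},Z_{t_0})$ that your base-change argument cannot reach.

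The base-change step itself is also not established by your stated reasons. Flatness over $T$ of $R^q\pi_*\Omega^i_{Y/T}(\log E)\big(\lfloor\alpha D\rfloor\big)$ for $q>0$ does not follow from Saito's strictness for $\pi$ (which controls ${\rm Gr}^F{\rm DR}$ of the pushforward, not $\cO_T$-flatness of the individual higher direct images), nor from $E_1$-degeneration for $\nu=\mu\circ\pi$ (which concerns $R^q\nu_*$, not $R^q\pi_*$); such flatness is essentially equivalent to the constancy you are trying to prove. What actually works, and is what the paper does after reducing to $\dim T=1$, is much weaker and one-directional: once the total-space vanishing $R^q\pi_*(\cdot)=0$ for $q>0$ is known, $\mathbf{R}\pi_*$ is a sheaf in degree $0$, and tensoring with the length-one Koszul resolution of $\cO_{X_{t_0}}$ yields the fiberwise vanishing for $q>0$ by degree reasons alone. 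The converse direction can be recovered by descending induction and Nakayama, but even so you would still face the conditional nature of the criteria described above; the argument must be restructured around a one-directional deformation claim combined with semicontinuity, rather than a biconditional.
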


Before giving the proof of the theorem, we prove the following lemma, which is where we use the results in this paper.
\begin{lem}\label{lem_thm_last}
With the notation in Theorem~\ref{thm_last}, suppose in addition that $T$ is a curve and that $Z$ and all $Z_t$ are reduced. 
If ${t_0}\in T$ is a point such that $\widetilde{\alpha}(X_{t_0},Z_{t_0})\geq \gamma$ for some $\gamma\in \Q_{>0}$ and $\widetilde\alpha(X_t,Z_t)>\gamma$ for any other $t$ in some neighborhood of ${t_0}$, then we have $\widetilde\alpha(X_{t_0},Z_{t_0})> \gamma$.
\end{lem}

\begin{proof}
Without any loss of generality, we may assume that $\widetilde{\alpha}(X_t,Z_t)>\gamma$ for all $t\in T\smallsetminus\{t_0\}$. 
Using \cite{MPV}*{Theorem E(1)}, we deduce that $\widetilde{\alpha}(X\smallsetminus X_{t_0},Z\smallsetminus Z_{t_0})>\gamma$, and since
$\widetilde{\alpha}(X_{t_0},Z_{t_0})\geq\gamma$, by
Inversion of Adjunction for the minimal exponent~\cite{C24}*{Theorem 1.1}, we get $\widetilde{\alpha}(X,Z)>\gamma$. Let us write $\gamma=p+\alpha$, for some integer $p \geq 0$ and $\alpha\in [0,1)\cap \Q$. We put $D=\pi^*(Z)$. Let us denote by $\nu\colon Y\to T$ the composition $\mu\circ\pi$. As before, for $t\in T$, we write
 $Y_t=\nu^{-1}(t)$, $E_t=E\cap Y_t$, and $D_t= D\cap Y_t$.	We first show that 
	\[
	R^q \pi_* \Omega_{Y_{t_0}}^i(\log E_{t_0})\big(\lfloor{\alpha D_{t_0}} \rfloor\big)=0\quad\text{for all}\quad q>0,\,i\leq p.
	\] 
	We may and will assume that we have $z\in\cO(T)$ that gives an algebraic coordinate on $T$ and such that $z(t_0)=0$. 
	Let  $\Omega_{ Y/T}(\log\,E) $ be the cokernel of the injective morphism $\cO_Y\to \Omega_{  Y}(\log\,E)$ that maps $1$ to $du$. Since $E$ has relative simple normal crossings over $T$, the sheaf $\Omega_{Y/T}(\log\,E)$ is locally free. Put $\Omega^i_{Y/T}(\log\,E)=\wedge^i \Omega_{Y/T}(\log\,E)$ for every $i\geq 0$. Note that for every $i\geq 0$, we have a short exact sequence of locally free $\cO_Y$-modules
	\[
	0 \to  \Omega_{  Y/T}^{i-1}(\log   E)\big(\lfloor{\alpha   D} \rfloor\big) \xrightarrow{dz \wedge-} \Omega_{  Y}^i(\log   E)\big(\lfloor{\alpha   D} \rfloor\big) \to \Omega_{Y/T}^i(\log\,E)\big(\lfloor{\alpha   D} \rfloor\big) \to 0,
	\]
	which induces the exact triangle
	\begin{equation}\label{eq_exacttrig}
		\bR\pi_* \Omega_{  Y/T}^{i-1}(\log\,E)\big(\lfloor{\alpha   D} \rfloor\big) \to \bR\pi_*\Omega_{Y}^i(\log\,E)\big(\lfloor{\alpha   D} \rfloor\big) \to \bR \pi_* \Omega_{Y/T}^i(\log\,E)\big(\lfloor{\alpha   D} \rfloor\big) \xrightarrow{+1}.
	\end{equation}
	Since $\widetilde\alpha(X,Z)>p+\alpha$, it follows from Theorem~\ref{thm_char} and Corollary~\ref{dual_formulation} that 
	$$R^q\pi_*\Omega_{Y}^i(\log\,E)\big(\lfloor{\alpha   D} \rfloor\big)=0\quad\text{for all}\quad q>0,\,i\leq p.$$ 
	Taking the long exact sequence associated to~\eqref{eq_exacttrig} and using
	induction on $i$, we obtain
	\begin{equation}\label{eq_vanishing1}
	R^q \pi_*\Omega_{  Y/T}^i(\log\,E)\big(\lfloor{\alpha   D} \rfloor\big)=0	 \quad \text{for all}\quad q>0,\, i\leq p.
	\end{equation}
	Note that the resolution $\cO_T \overset{z}\longrightarrow \cO_T$ of $\cO_{\{t_0\}}$ implies that in the derived category of coherent sheaves on $X$ (respectively $Y$), we can represent
	$\cO_{X_{t_0}}$ (respectively $\cO_{Y_{t_0}}$) by the the complex $\cO_X\overset{z}\longrightarrow \cO_X$ (respectively, by $\cO_Y\overset{z}\longrightarrow\cO_Y$). In particular, we have
	${\mathbf L}\pi^*\cO_{X_{t_0}}\simeq\cO_{Y_{t_0}}$.
	 Using the projection formula, we get
	\[
	\begin{aligned}
	\pi_* \Omega_{Y/T}^i(\log\,E)\big(\lfloor{\alpha   D} \rfloor\big) \otimes^{\mathbf L}_{\cO_{X}} {\cO_{X_{t_0}}} &=	\bR \pi_* \Omega_{Y/T}^i(\log\,E)\big(\lfloor{\alpha   D} \rfloor\big) \otimes^{\mathbf L}_{\cO_{X}} {\cO_{  X_{t_0}}} \quad \text{by}~\eqref{eq_vanishing1}\\
	&= \bR \pi_*\left(\Omega_{Y/T}^i(\log\,E)\big(\lfloor{\alpha   D}\rfloor\big)\otimes^{\mathbf L}_{\cO_{Y}} {\mathbf L}\pi^*{\cO_{X_{t_0}}}\right) \\
	&= \bR \pi_*\left(\Omega_{Y/T}^i(\log\,E)\big(\lfloor{\alpha   D}\rfloor\big)\otimes_{\cO_{  Y}} {\cO_{  Y_{t_0}}}\right) \\
	&= \bR \pi_*\Omega_{  Y_{t_0}}^i(\log\,E_{t_0})\big(\lfloor{\alpha   D_{t_0}}\rfloor\big).
	\end{aligned}
	\]
	We thus conclude that 
	\begin{equation}\label{eq_vanishing2}
		R^q \pi_* \Omega_{Y_{t_0}}^i(\log\,E_{t_0})\big(\lfloor{\alpha D_{t_0}} \rfloor\big)=0 \quad \text{for all}\quad q>0,\,  i\leq p.
	\end{equation}
	 
	If $\alpha>0$, then Corollary~\ref{dual_formulation} gives $\widetilde\alpha(X_{t_0},Z_{t_0})>p+\alpha$, hence the proof of the lemma is complete in this case.
	 The remaining case is when $\alpha=0$ and $p\geq 1$. If $p\geq 2$, then the condition $\widetilde\alpha(X_{t_0},Z_{t_0})\geq p$ implies $\codim_{Z_{t_0}}\big(({Z_{t_0}})_{\rm sing}\big)\geq 2p-1 \geq 3$ by, for example, ~\cite[Lemma~2.2]{MOPW}. According to Theorem~\ref{thm_char} and Remark~\ref{codim_sing_locus}, we deduce using also (\ref{eq_vanishing2}), that $\widetilde\alpha(X_{t_0},Z_{t_0})>p$.  Finally, if $p=1$, then $Z$ has rational singularities by~\cite[Theorem~0.4]{Saito-B} as $\widetilde\alpha(X,Z)>1$. 
	 Let $Z'$ be an irreducible component of $Z$ (note that since $Z$ is normal, its irreducible components are pairwise disjoint). If $E'$ is the strict transform of $Z'$ on $Y$, 
	 since $Z$ has rational singularities, we have
	$\cO_{Z'} \simeq \bR\pi_* \cO_{E'}$. Hence, since $Z'$ is flat over $T$, by the projection formula, we have 
	\[
	\cO_{Z'_{t_0}}\simeq \bR\pi_* \cO_{E'}\otimes_{\cO_{Z'}}^{\bf L} \cO_{Z'_{t_0}}\simeq \bR\pi_* \cO_{E'_{t_0}}.
	\] 
	By our assumption, $E'_{t_0}$ is a resolution of singularities of $Z'_{t_0}$. Hence, $Z'_{t_0}$ has rational singularities as well,
	and thus $Z_{t_0}$ has the same property. Another application of \cite[Theorem~0.4]{Saito-B} gives $\widetilde\alpha(X_{t_0},Z_{t_0})>1$.
	This completes the proof of the lemma.
\end{proof}

We can now prove the main result of this section.

\begin{proof}[Proof of Theorem~\ref{thm_last}]
Since $K_{Y_t/X_t}=K_{Y/X}\vert_{Y_t}$ for every $t\in T$, it is straightforward to see, using the description of the log canonical threshold in (\ref{formula_lct}) that 
$\lct(X_t,Z_t)=\lct(X,Z)$ for all $t\in T$.
The assertion in the theorem is thus clear when this common value is $<1$, hence from now on we may and will assume that $Z$ and all $Z_t$ are reduced. 
	
Let us recall first some basic facts about the behavior of minimal exponents in families. There is a nonempty open subset 	$U\subseteq T$
such that $\widetilde{\alpha}_P(X,Z)=\widetilde{\alpha}_P(X_t,Z_t)$ for all $t\in U$ and all $P\in Z_t$ (for the definition of the local version of the minimal exponent, see
Remark~\ref{rmk_local_min_exponent}). This is a consequence of the description of the minimal exponent in terms of Hodge ideals 
(see \cite[Lemma~5.3]{MPV}) and of the behavior of Hodge ideals under restriction to general hypersurfaces (see \cite[Theorem~13.1]{MPQ}).

Moreover, if $U\subseteq T$ is as above, then after possibly replacing $U$ by a smaller open subset, we may and will assume that 
$\widetilde{\alpha}(X_t,Z_t)=\alpha$ for all $t\in U$. 
Indeed, for every $\alpha>0$, the set 
$$W_{\alpha}:=\big\{P\in Z\cap \mu^{-1}(U)\mid \widetilde{\alpha}_P(X,Z)\leq\alpha\big\}$$
is a closed subset of $\mu^{-1}(U)$ and we have only finitely many distinct such subsets. If $\alpha$ is minimal with the property that $W_{\alpha}$ dominates $T$, it follows that
for every $t\in U\smallsetminus\bigcup_{\beta<\alpha}\mu(W_{\beta})$, we have $\widetilde{\alpha}(X_t,Z_t)=\alpha$. 

The first step is to show that $\widetilde{\alpha}(X_t,Z_t)$ is independent of $t\in T$. We will then deduce that this constant value is $\widetilde{\alpha}(X,Z)$. 
In order to prove independence of $t$, we may and will assume that $T$ is an affine variety. Moreover, since any two points on $T$ lie on 
the image of
some smooth curve $C$, after taking the fiber product with $C\to T$, we may and will assume that $T$ is a curve. 
		
In this case, it follows from Lemma~\ref{lem_thm_last}
 that $\widetilde{\alpha}(X_s,Z_s)\geq\alpha$ for $s\in T\smallsetminus U$. However, a version of the semicontinuity of minimal exponents
(see \cite[Theorem~E(2)]{MPV}) implies now that $\widetilde{\alpha}(X_s,Z_s)\leq\alpha$ for all $s\in T\smallsetminus U$. Indeed, let us fix $s\in T\smallsetminus U$. It is a consequence of the behavior of the minimal exponent under restriction to a hypersurface
(see \cite[Theorem~E(1)]{MPV})
 that 
 $$\widetilde{\alpha}(X_s,Z_s)\leq \widetilde{\alpha}_P(X_s,Z_s)\leq \widetilde{\alpha}_P(X,Z)$$
 for all $P\in Z_s$. Since $\mu$ is proper, it follows that there is an open neighborhood $V$ of $s$ such that 
$$\widetilde{\alpha}(X_s,Z_s)\leq\widetilde{\alpha}_P(X,Z)\quad\text{for all}\quad t\in V,\,P\in Z_t.$$ 
By taking $t\in V\cap U$, we see that $\widetilde{\alpha}(X_s,Z_s)\leq\alpha$.
We thus conclude that $\widetilde{\alpha}(X_t,Z_t)=\alpha$ for all $t\in T$.

Suppose now that $T$ is arbitrary (not necessarily one-dimensional). We have seen 
that $\widetilde{\alpha}(X_t,Z_t)=\gamma$ for all $t\in T$ and some $\gamma$ and we want to show that $\widetilde{\alpha}(X,Z)=\gamma$.
Recall that we have a nonempty open subset $U$ of $T$ such that $\widetilde{\alpha}_P(X_t,Z_t)=\widetilde{\alpha}_P(X,Z)$ for all $t\in U$ and all $P\in Z_t$. This implies that $\widetilde{\alpha}(X,Z)\leq\gamma$. 
If this is not an equality, then there is $P\in Z$  such that $\widetilde{\alpha}_P(X,Z)<\gamma$. However, by the behavior of the minimal exponent under restriction to smooth subvarieties (which 
is a consequence of \cite[Theorem~E(1)]{MPV}), we conclude that if $s=\mu(P)$, then
$$\gamma=\widetilde{\alpha}(X_s,Z_s)\leq \widetilde{\alpha}_P(X_s,Z_s)\leq \widetilde{\alpha}_P(X,Z)<\gamma,$$
a contradiction. This completes the proof of the theorem.	
\end{proof}

\begin{rmk}
The properness assumption in Theorem~\ref{thm_last} is important: if we drop it, then the conclusion fails for rather trivial reasons. For example, if $\mu\colon X\to T$ and $Z\hookrightarrow X$
are as in the statement of the theorem and $Z\to T$ is not smooth, then for $t_0\in T$ and $\mu'\colon X'=X\smallsetminus (Z_{t_0})_{\rm sing}\to X$ and 
$Z'=Z\smallsetminus (Z_{t_0})_{\rm sing}$,
the existence of a simultaneous resolution still holds, but $\widetilde{\alpha}(Z'_{t_0})=\infty>\widetilde{\alpha}(Z_t)$ for $t\neq t_0$.

Furthermore, this can't be fixed\footnote{It is typical, when considering the semicontinuity of singularity invariants in non-proper families, to choose a section
and only consider the local invariants at the points in the image of the section (see, for example, \cite[Theorem~E(2)]{MPV}).}
 by considering the local minimal exponent at the points in the image of a section. In fact, the assertion in Theorem~\ref{thm_last} can fail \emph{even for proper families} 
 for the local minimal exponent. More precisely, with the assumptions in the theorem,
it is not true that if
$s\colon T\to X$ is a section of $\pi$, then $\widetilde{\alpha}_{s(t)}(X_t,Z_t)$ is independent of $T$. Indeed, suppose that 
we consider a reduced singular hypersurface $H$ in a smooth variety $W$. Let $g\colon \widetilde{H}\to H$ be a resolution of singularities of $H$ and $h\colon \widetilde{W}\to W$
be a log resolution of $(W,H)$ that is an isomorphism over $W\smallsetminus H$. Let's consider $\mu\colon X=W\times \widetilde{H}\to \widetilde{H}=T$
given by the second projection and $s\colon T\to X$ the section given by $(i\circ g,\id)$, where $i\colon H\hookrightarrow W$ is the inclusion. 
If $Z=H\times\widetilde{H}\hookrightarrow X$ is given by $(i,\id)$ and we consider the morphism $h\times\id\colon Y=\widetilde{W}\times \widetilde{H}\to W\times \widetilde{H}=X$,
then we see that the hypothesis in Theorem~\ref{thm_last} is satisfied. On the other hand, for every $t\in \widetilde{W}$, we have
$$\widetilde{\alpha}_{s(t)}(X_t,Z_t)=\widetilde{\alpha}_{g(t)}(W,H)$$
and this is not independent of $t$. 
\end{rmk}

\section*{References}
\begin{biblist}

\bib{Budur_Saito}{article}{
   author={Budur, N.},
   author={Saito, M.},
   title={Multiplier ideals, $V$-filtration, and spectrum},
   journal={J. Algebraic Geom.},
   volume={14},
   date={2005},
   number={2},
   pages={269--282},
}

\bib{LMH}{article}{
      title={Limits of Hodge structures via holonomic D-modules}, 
      author={Chen, Q.},
      year={2021},
      eprint={arXiv2103.03983},
      url={https://arxiv.org/abs/2103.03983}, 
}

\bib{C24}{article}{
      title={Inversion of Adjunction for the minimal exponent}, 
      author={Chen, Q.},
      year={2025},
      journal={Forum Math. Sigma},
      volume={13},
      pages={e92},
}

\bib{CDM}{article}{
	  author={Chen,Q.},
	  author={Dirks,B.},
	  author={Musta\c{t}\u{a}, M.},
	  title={An introduction to V-filtrations},
	  booktitle={Handbook of Geometry and Topology of Singularities VII},
	  year={2025},
      eprint={arXiv:2402.07816},
      pages={553--642},
}

\bib{FL}{article}{
   author={Friedman, R.},
   author={Laza, R.},
   title={Higher Du Bois and higher rational singularities},
   note={Appendix by M.~Saito},
   journal={Duke Math. J.},
   volume={173},
   date={2024},
   number={10},
   pages={1839--1881},
}

\bib{Graf}{article}{
   author={Graf, P.},
   title={The generalized Lipman-Zariski problem},
   journal={Math. Ann.},
   volume={362},
   date={2015},
   number={1-2},
   pages={241--264},
}

\bib{GKKP}{article}{
   author={Greb, D.},
   author={Kebekus, S.},
   author={Kov\'{a}cs, S.},
   author={Peternell, T.},
   title={Differential forms on log canonical spaces},
   journal={Publ. Math. Inst. Hautes \'{E}tudes Sci.},
   number={114},
   date={2011},
   pages={87--169},
}

\bib{HTT}{book}{
   author={Hotta, R.},
   author={Takeuchi, K.},
   author={Tanisaki, T.},
   title={D-modules, perverse sheaves, and representation theory},
   publisher={Birkh\"auser, Boston},
   date={2008},
}

\bib{Saito_et_al}{article}{
   author={Jung, S.-J.},
   author={Kim, I.-K.},
   author={Saito, M.},
   author={Yoon, Y.},
   title={Higher Du Bois singularities of hypersurfaces},
   journal={Proc. Lond. Math. Soc. (3)},
   volume={125},
   date={2022},
   number={3},
   pages={543--567},
}

\bib{Kashiwara}{article}{
   author={Kashiwara, M.},
   title={$B$-functions and holonomic systems. Rationality of roots of
   $B$-functions},
   journal={Invent. Math.},
   volume={38},
   date={1976/77},
   number={1},
   pages={33--53},
}

\bib{KS}{article}{
   author={Kebekus, S.},
   author={Schnell, C.},
   title={Extending holomorphic forms from the regular locus of a complex
   space to a resolution of singularities},
   journal={J. Amer. Math. Soc.},
   volume={34},
   date={2021},
   number={2},
   pages={315--368},
}

\bib{Kollar}{article}{
   author={Koll\'ar, J.},
   title={Singularities of pairs},
   conference={
      title={Algebraic geometry---Santa Cruz 1995},
   },
   book={
      series={Proc. Sympos. Pure Math.},
      volume={62},
      publisher={Amer. Math. Soc., Providence, RI},
   },
   date={1997},
   pages={221--287},
}

\bib{Lazarsfeld}{book}{
       author={Lazarsfeld, R.},
       title={Positivity in algebraic geometry II},  
       series={Ergebnisse der Mathematik und ihrer Grenzgebiete},  
       volume={49},
       publisher={Springer-Verlag, Berlin},
       date={2004},
}

\bib{MOPW}{article}{
   author={Musta\c t\u a, M.},
   author={Olano, S.},
   author={Popa, M.},
   author={Witaszek, J.},
   title={The Du Bois complex of a hypersurface and the minimal exponent},
   journal={Duke Math. J.},
   volume={172},
   date={2023},
   number={7},
   pages={1411--1436},
}

\bib{MP3}{article}{
   author={Musta\c{t}\u{a}, M.},
   author={Popa, M.},
   title={Hodge ideals},
   journal={Mem. Amer. Math. Soc.},
   volume={262},
   date={2019},
   number={1268},
   pages={v+80},
 }

\bib{MPQ}{article}{
   author={Musta\c t\u a, M.},
   author={Popa, M.},
   title={Hodge ideals for ${\bf Q}$-divisors: birational approach},
   language={English, with English and French summaries},
   journal={J. \'Ec. polytech. Math.},
   volume={6},
   date={2019},
   pages={283--328},
}

\bib{MPV}{article}{
   author={Musta\c t\u a, M.},
   author={Popa, M.},
   title={Hodge ideals for $\Bbb Q$-divisors, $V$-filtration, and minimal
   exponent},
   journal={Forum Math. Sigma},
   volume={8},
   date={2020},
   pages={Paper No. e19, 41},
}

\bib{MP2}{article}{
     author={Musta\c{t}\u{a}, M.},
     author={Popa, M.},
     title={Hodge filtration, minimal exponent, and local vanishing},
     journal={ Invent. Math.},
     volume={220},
     date={2020},
     pages={453--478},
}

\bib{MP}{article}{
author={Musta\c{t}\u{a}, M.},
author={Popa, M.},
title={On $k$-rational and $k$-Du Bois local complete intersections},
journal={preprint arXiv:2207.08743, to appear in Algebraic Geometry},
date={2022},
}

\bib{Saito-MHP}{article}{
   author={Saito, M.},
   title={Modules de Hodge polarisables},
   journal={Publ. Res. Inst. Math. Sci.},
   volume={24},
   date={1988},
   number={6},
   pages={849--995},
}

\bib{Saito-MHM}{article}{
   author={Saito, M.},
   title={Mixed Hodge modules},
   journal={Publ. Res. Inst. Math. Sci.},
   volume={26},
   date={1990},
   number={2},
   pages={221--333},
}

\bib{Saito-B}{article}{
   author={Saito, M.},
   title={On $b$-function, spectrum and rational singularity},
   journal={Math. Ann.},
   volume={295},
   date={1993},
   number={1},
   pages={51--74},
}

\bib{Saito-MLCT}{article}{
      author={Saito, M.},
	title={Hodge ideals and microlocal $V$-filtration},
	 year={2016},
      eprint={arXiv:1612.08667},
      url={https://arxiv.org/abs/1612.08667}, 
}
	
	\bib{MHM_project}{article}{
	author={Sabbah,C.},
	author={Schnell,C.},
	title={Mixed Hodge Module Project},
	year={2024},
url={https://perso.pages.math.cnrs.fr/users/claude.sabbah/MHMProject/mhm.pdf},
}

\bib{Schnell}{article}{
   author={Schnell, C.},
   title={An overview of Morihiko Saito's theory of mixed Hodge modules},
   conference={
      title={Representation theory, automorphic forms \& complex geometry},
   },
   book={
      publisher={Int. Press, Somerville, MA},
   },
   isbn={978-1-57146-362-3},
   date={2019},
   pages={27--80},
}

\bib{SY}{article}{
	author={Schnell, C.},
	author={Yang, R.},
	title={Higher multiplier ideals},
	journal={J. Reine Angew. Math.},
	date={2026},
	pages = {153--197},
	number = {832},
	url = {https://doi.org/10.1515/crelle-2025-0097},
}

\bib{Steenbrink}{article}{
  title={Limits of Hodge structures},
  author={Steenbrink, J.},
  journal={Invent. Math.},
  volume={31},
  number={3},
  pages={229--257},
  year={1976},
  publisher={Springer-Verlag Berlin/Heidelberg}
}

\bib{Steenbrink85}{article}{
   author={Steenbrink, J.},
   title={Vanishing theorems on singular spaces},
   note={Differential systems and singularities (Luminy, 1983)},
   journal={Ast\'{e}risque},
   number={130},
   date={1985},
   pages={330--341},
}

\end{biblist}
\end{document}